\definecolor{wb}{RGB}{51,153,255}
\numberwithin{equation}{subsection}
\newcommand{\defeq}{\vcentcolon=}
\newcommand{\eqdef}{=\vcentcolon}
\def\moverlay{\mathpalette\mov@rlay}
\def\mov@rlay#1#2{\leavevmode\vtop{%
   \baselineskip\z@skip \lineskiplimit-\maxdimen
   \ialign{\hfil$\m@th#1##$\hfil\cr#2\crcr}}}
\newcommand{\charfusion}[3][\mathord]{
    #1{\ifx#1\mathop\vphantom{#2}\fi
        \mathpalette\mov@rlay{#2\cr#3}
      }
    \ifx#1\mathop\expandafter\displaylimits\fi}
\newcommand{\cupdot}{\charfusion[\mathbin]{\cup}{\cdot}}
\newtheoremstyle{definitions}
 	{\topsep}% measure of space to leave above the theorem. E.g.: 3pt
	{\topsep}% measure of space to leave below the theorem. E.g.: 3pt
	{}% name of font to use in the body of the theorem
	{}% measure of space to indent
	{\bfseries}% name of head font
	{:}% punctuation between head and body
	{.5em}% space after theorem head; " " = normal interword space
	{}
\newtheoremstyle{lemmata}
	{\topsep}% measure of space to leave above the theorem. E.g.: 3pt
	{\topsep}% measure of space to leave below the theorem. E.g.: 3pt
	{\itshape} %{\slshape}% name of font to use in the body of the theorem
	{}% measure of space to indent
	{\bfseries}% name of head font
	{:}% punctuation between head and body
	{.5em}% space after theorem head; " " = normal interword space
	{}
\theoremstyle{lemmata}
\newtheorem{Theorem}[subsection]{Theorem}
\newtheorem{Lemma}[subsection]{Lemma}
\newtheorem{Corollary}[subsection]{Corollary}
\newtheorem{Proposition}[subsection]{Proposition}
\theoremstyle{definitions}
\newtheorem{Definition}[subsection]{Definition}
\newtheorem{Remark}[subsection]{Remark}
\newtheorem{Remarks}[subsection]{Remarks}
\newtheorem*{Remarks-nn}{Remarks}
\newtheorem{Note}[subsection]{Note}
\newtheorem{Example}[subsection]{Example}
\newtheorem{Picture}[subsection]{Picture}
\DeclareMathOperator{\GL}{GL}
\DeclareMathOperator{\PGL}{PGL}
\DeclareMathOperator{\diag}{diag}
\DeclareMathOperator{\codim}{codim}
\DeclareMathOperator{\id}{id}
\DeclareMathOperator{\SL}{SL}
\title[The behavior of distinguished forms on the fundamental domain]{On Drinfeld modular forms of higher rank VI: The simplicial complex associated with a coefficient form}
\author{Ernst-Ulrich Gekeler}
\date{\today}
\subjclass{MSC 2020: 11F52, 11F27, 14G17, 14G22}
\keywords{Drinfeld modular forms, coefficient forms, Bruhat-Tits building, Simpliciality} 
\newcommand*\rows{5} %Rows for equilateral triangle patterns 
\begin{document}

\begin{abstract}
	The coefficient forms \( {}_{a} \ell_{k} \) and the para-Eisenstein series \(\alpha_{k}\) are simplicial Drinfeld modular forms. We study the attached simplicial complexes \(\mathcal{BT}^{r}( {}_{a} \ell_{k})\) and
	\(\mathcal{BT}^{r}(\alpha_{k})\), which are full subcomplexes of the Bruhat-Tits building \(\mathcal{BT}^{r}\) of \( \PGL(r, K_{\infty})\). They are connected (if the rank \(r\) is larger than 2), strongly equidimensional
	of codimension 1 in \(\mathcal{BT}^{r}\), boundaryless, and satisfy a symmetry property under the non-trivial involution of the Dynkin diagram \(A_{r-1}\).
\end{abstract}

\maketitle

\section{Introduction and notation} \label{Section.Introduction-and-notation}

\subsection{} This article continues the work of the papers \enquote{On Drinfeld modular forms of higher rank I, \dots, V} (\cite{Gekeler2017}, \cite{Gekeler-ta-1}, \cite{Gekeler2018}, \cite{Gekeler-ta-2}, \cite{Gekeler-ta-3}), labelled
[I], \dots, [V], and notably of [V].

We show the simpliciality of the Drinfeld modular forms \( {}_{a}\ell_{k} \), where \(a \in A \defeq \mathds{F}_{q}[T]\) and \({}_{a}\ell_{k}\) is the coefficient of the generic Drinfeld module \(\phi^{\boldsymbol{\omega}}\) of rank \(r \geq 2\)
on the Drinfeld space \(\Omega^{r}\), with \(a\)-division polynomial
\begin{equation} \label{Eq.A-Division-Polynomial}
	\phi_{a}^{\boldsymbol{\omega}}(X) = aX + \sum_{1 \leq k \leq r \deg a} {}_{a}\ell_{k}(\boldsymbol{\omega}) X^{q^{k}}.
\end{equation}
By definition, a modular form \(f\) is simplicial if the image \(\mathcal{BT}^{r}(f)\) of its zero set \(\Omega^{r}(f)\) under the building map
\[
	\lambda^{r} \colon \Omega^{r} \longrightarrow \mathcal{BT}^{r}(\mathds{Q})
\]	
(see [I] Sect. 2 for details and \cite{BruhatTits1972} for foundational material) to the set of \(\mathds{Q}\)-points of the Bruhat-Tits building \(\mathcal{BT}^{r}\) of \(\PGL(r, \mathds{F}_{q}((T^{-1})) )\) is the set of \(\mathds{Q}\)-points
of a full subcomplex of codimension 1, also denoted by \(\mathcal{BT}^{r}(f)\), of \(\mathcal{BT}^{r}\). Beyond the existence of the simplicial complex \(\mathcal{BT}({}_{a}\ell_{k})\), we show that it has pleasant properties. It
\begin{itemize}
	\item depends only on the degree \(d = \deg a\) of \(a \in A\), and is therefore denoted \(\mathcal{BT}^{r}(d,k)\);
	\item is strongly equidimensional of codimension 1 in \(\mathcal{BT}^{r}\) (hence of dimension \(r-2\));
	\item is boundaryless (for precise definitions, see below); and
	\item satisfies a certain symmetry property with respect to the involution of the Dynkin diagram of the underlying root system of type \(A_{r-1}\).
\end{itemize}
The results, along with similar ones for the related para-Eisenstein series \(\alpha_{k}\), are collected in the Main Theorem 1.8.

\subsection{} The question of simpliciality of \( {}_{a}\ell_{k}\) had been answered to the positive for \(r=2\) in \cite{Gekeler2011}. Here it just means that the (isolated) zeroes of \( {}_{a}\ell_{k} \) map under \(\lambda^{2}\) to vertices
of the tree \(\mathcal{BT}^{2}\). In the present work, we deal with the case of a fixed rank \(r \geq 3\), where the structure is richer and allows deeper findings. Nevertheless, all the present results are still valid for \(r=2\) (but in part empty
or trivial).

A new ingredient is the criterion of Theorem \ref{Theorem.Membership-via-d-characteristic-sequences}, an easy-to-handle device to decide whether a given rational point \(\mathbf{x}\) of the Bruhat-Tits building \(\mathcal{BT}^{r}\) belongs to the vanishing set \(\mathcal{BT}^{r}(d,k)\)
or not. It results from Theorem 3.2 of [V] and the observation Proposition \ref{Proposition.Monotonicity-of-modular-form-map} about the sizes of the torsion points of \(\phi^{\boldsymbol{\omega}}\).

\subsection{} The notation agrees with that of [V] and its predecessors, to which we refer for further information. Thus \(A\) is the polynomial ring \(\mathds{F}[T]\), where \(\mathds{F} = \mathds{F}_{q}\) is the field with \(q\)
elements, \(q\) a power of a prime \(p\), \(K = \mathds{F}(T)\) its fraction field, \(K_{\infty} = \mathds{F}((T^{-1}))\) the completion at infinity, with ring of integers \(\mathcal{O}_{\infty} = F[[T^{-1}]]\) and completed algebraic
closure \(C_{\infty}\). The absolute value \( \lvert \mathbin{.}\rvert \) on \(K_{\infty}\) and \(C_{\infty}\) is normalized such that \(\lvert T \rvert = q \). For \(0 \neq x \in C_{\infty}\) we write \( \log x = \log_{q} \lvert x \rvert = - v_{\infty}(x)\).
We fix a natural number \(r \geq 3\) (occasionally, \(r=2\) is allowed) and let 
\begin{equation}
	\Omega = \Omega^{r} = \mathds{P}^{r-1}(C_{\infty}) \smallsetminus \bigcup H(C_{\infty}))
\end{equation}
be the Drinfeld space \cite{Drinfeld1974}, where \(H\) runs through the set of hyperplanes of \(\mathds{P}^{r-1}\) defined over \(K_{\infty}\). Projective coordinates \( (\omega_{1}: \dots : \omega_{r})\) on \(\Omega\) are normalized
such that \(\omega_{r} = 1\). It is related with the Bruhat-Tits building \(\mathcal{BT} = \mathcal{BT}^{r}\) of \(\PGL(r, K_{\infty})\) through the building map
\[
	\lambda \colon \Omega \longrightarrow \mathcal{BT}(\mathds{Q})
\]
onto the set of points with rational barycentric coordinates of the realization \(\mathcal{BT}(\mathds{R})\) of \(\mathcal{BT}\) (see [II] Sect. 2; beware of the sign error in [II] corrected by [V] 1.3.). The map \(\lambda\)
is equivariant for the actions of the group \(\GL(r, K_{\infty})\) on \(\Omega\) and \(\mathcal{BT}\).

For \(\boldsymbol{\omega} \in \Omega\), we let \(\Lambda_{\boldsymbol{\omega}}\) be the lattice \(\Lambda_{\boldsymbol{\omega}} \defeq A\omega_{1} + \dots + A\omega_{r}\) in \(C_{\infty}\) (well-defined, as we have fixed
\(\omega_{r} = 1\)) with associated Drinfeld module \(\phi^{\boldsymbol{\omega}}\) of rank \(r\). Then \(\phi^{\boldsymbol{\omega}}\) is determined through its \(T\)-operator polynomial
\begin{equation}
	\phi_{T}^{\boldsymbol{\omega}} = TX + \sum_{1 \leq k \leq r} g_{k}(\boldsymbol{\omega}) X^{q^{k}}.
\end{equation}
Similarly, for each \(a \in A\) of degree \(d\), the \(a\)-operator polynomial has shape
\begin{equation}
	\phi_{a}^{\boldsymbol{\omega}}(X) = aX + \sum_{1 \leq k \leq rd} {}_{a}\ell_{k}(\boldsymbol{\omega}) X^{q^{k}}.
\end{equation}
(Thus \(g_{k} = {}_{T}\ell_{k}\).) As a function of \(\boldsymbol{\omega} \in \Omega\), \({}_{a}\ell_{k}\) is a modular form for \(\Gamma \defeq \GL(r,A)\) of weight \(q^{k} - 1\) and type 0 (see [I] Sect. 1). As such, it is determined by its restriction
to the \textbf{fundamental domain} \(\mathbf{F}\) for \(\Gamma\) on \(\Omega\):
\begin{equation}
	\mathbf{F} \defeq \left\{ \begin{array}{r|l}  &  \text{the ordered set \( \{\omega_{r} = 1, \omega_{r-1}, \dots, \omega_{1}\} \)} \\ \boldsymbol{\omega} = (\omega_{1} : \dots : \omega_{r}) \in \Omega & \text{is a successive minimum basis of the} \\ & \text{\(A\)-lattice \(\Lambda_{\boldsymbol{\omega}}\)} \end{array} \right\}
\end{equation}
(See [I] 1.13 and \cite{Gekeler2019} Sect. 3.) 

Recall that the set \(\mathcal{BT}(\mathds{Z})\) of vertices of \(\mathcal{BT}\) is the set of similarity classes \([L]\) of \(O_{\infty}\)-lattices \(L\) in the \(K_{\infty}\)-vector space \(V = K_{\infty}^{r}\). The \textbf{standard apartment}
\(\mathcal{A}\) of \(\mathcal{BT}\) is the full subcomplex with vertex set 
\begin{equation} \label{Eq.Vertex-set-of-full-subcomplex}
	\mathcal{A}(\mathds{Z}) = \{ [L_{\mathbf{n}}] \mid \mathbf{n} \in \mathds{Z}^{r} \},
\end{equation}
where \(L_{\mathbf{n}}\) is the lattice \(L_{\mathbf{n}} = \pi^{n_{1}}O_{\infty} \oplus \dots \oplus \pi^{n_{r}} O_{\infty} \subset V\) with the uniformizer \(\pi \defeq T^{-1}\) of \(K_{\infty}\). It corresponds to the \textbf{standard torus} 
\(\mathbf{T}\) of diagonal matrices in \(\PGL(r, K_{\infty})\). We have
\begin{equation} \label{Eq.Equivalence-of-vertices}
	[L_{\mathbf{n}}] = [L_{\mathbf{n}'}] \Longleftrightarrow \mathbf{n}' - \mathbf{n} = (n,n,\dots, n)
\end{equation}
with some \(n\) in \(\mathds{Z}\). As usual (see [V] 1.2.4) we identify the realization \(\mathcal{A}(\mathds{R})\) of \(\mathcal{A}\) with the euclidean affine space with translation group
\[
	\mathds{R}/\mathds{R}(1,1,\dots,1) \overset{\cong}{\longrightarrow} \{ \mathbf{x} \in \mathds{R}^{r} \mid x_{r} = 0\}
\]
and with the natural choice of origin \( \mathbf{0} = [L_{\mathbf{0}}]\). The \textbf{standard Borel subgroup} \(\mathbf{B}\) of upper triangular matrices determines the \textbf{standard Weyl chamber} \(\mathcal{W}\). Its vertex set is
\begin{equation}
	\mathcal{W}(\mathds{Z}) = \{ [L_{\mathbf{n}}] \mid \mathbf{n} = (n_{1}, \dots, n_{r}) \in \mathds{Z}^{r} \text{ and } n_{1} \geq n_{2} \geq \dots \geq n_{r} = 0\}
\end{equation}
(usually we simply write \(\mathbf{n}\) for the vertex \([L_{\mathbf{n}}]\)) and 
\[
	\mathcal{W}(\mathds{R}) = \{ \mathbf{x} \in \mathds{R}^{r} \mid x_{1} \geq x_{2} \geq \dots \geq x_{r} = 0\}.
\]
Then \(\mathcal{W}\) is a fundamental domain for the action of \(\Gamma\) on \(\mathcal{BT}\), and its relation with \(\mathbf{F}\) is 
\begin{equation}
	\lambda(\mathbf{F}) = \mathcal{W}(\mathds{Q}), \qquad \lambda^{-1}(\mathcal{W}(\mathds{Q})) = \mathbf{F}.
\end{equation}
For \(\mathbf{x} \in \mathcal{BT}(\mathds{Q})\) let \(\Omega_{\mathbf{x}} \defeq \lambda^{-1}(\mathbf{x})\), which for \(\mathbf{x} = (x_{1}, \dots, x_{r} = 0) \in \mathcal{W}(\mathds{Q})\) specializes to 
\begin{equation}
	\mathbf{F}_{\mathbf{x}} \defeq \Omega_{\mathbf{x}} = \{ \boldsymbol{\omega} \in \mathbf{F} \mid \log \omega_{i} = x_{i} \text{ for } 1 \leq i \leq r \}.
\end{equation}
Then \(\Omega_{\mathbf{x}}\) is an affinoid subspace of \(\Omega\), whose geometry depends on the position of \(\mathbf{x}\) in \(\mathcal{BT}(\mathds{Q})\). If for example \(\mathbf{x} \in \mathcal{BT}(\mathds{Z})\) then
\(\Omega_{\mathbf{x}}\) is isomorphic with \(\mathds{P}^{r-1}(C_{\infty}) \smallsetminus \bigcup B_{H}\), where \(B_{H}\) is a closed ball indexed by the \(\mathds{F}\)-hyperplanes \(H\) in \(\mathds{P}^{r-1}/\mathds{F}\), with corresponding intersection patterns, and if \(\mathbf{x}\) is in the interior \(\overset{\circ}{\sigma}\) of a simplex \(\sigma\) of maximal dimension \(r-1\), then \(\Omega_{\mathbf{x}}\) is isomorphic with the \((r-1)\)-fold product
\(\mathbf{S}^{r-1}\) of a sphere \(\mathbf{S} = \{ x \in C_{\infty} \mid \lvert x \rvert = 1 \}\), see [II] Theorem 2.4.

\subsection{} \label{Subsection.Nomenclature-for-holomorphic-function} Given a holomorphic function \(f\) on \(\Omega\), we define 
\begin{align*}
	\Omega(f)				&= \{ \boldsymbol{\omega} \in \Omega \mid f(\boldsymbol{\omega}) = 0\} \\
	\mathbf{F}(f)		&= \mathbf{F} \cap \Omega(f) \\
	\mathcal{BT}(f)	&= \lambda(\Omega(f)) \\
	\mathcal{A}(f)		&= \mathcal{A}(\mathds{Q}) \cap \mathcal{BT}(f)\\
	\mathcal{W}(f)		&= \mathcal{W}(\mathds{Q}) \cap \mathcal{BT}(f) = \lambda(\mathbf{F}(f)).
\end{align*}
Hence \(\mathcal{BT}(f)\) provides a coarse picture of the vanishing set \(\Omega(f)\) of \(f\). If \(f\) is a modular form for \(\Gamma = \GL(r,A)\) then
\begin{equation}
	\Omega(f) = \Gamma \mathbf{F}(f) \quad \text{and} \quad \mathcal{BT}(f) = \Gamma \mathcal{W}(f).
\end{equation}
When studying vanishing properties of \(f\), we may therefore restrict our attention to the behavior of \(f\) on \(\mathbf{F}\).

\subsection{} \label{Subsection.Convention-for-simplicial-complexes} Let \(\mathcal{S}\) be a simplicial subcomplex of \(\mathcal{BT}\). It is a \textbf{full subcomplex} if its simplices are the intersections of simplices 
\(\sigma\) of \(\mathcal{BT}\) with the vertex set \(\mathcal{S}(\mathds{Z})\) of \(\mathcal{S}\). Hence a full subcomplex \(\mathcal{S}\) (e.g., \(\mathcal{S} = \mathcal{A}\) or \(\mathcal{S} = \mathcal{W}\)) is determined by 
\(\mathcal{S}(\mathds{Z})\), and intersections of full subcomplexes are well-defined. It is \textbf{everywhere of codimension} 1 (or \textbf{equidimensional} of codimension 1, or of dimension \(r-2\)) if it has dimension 
\(r-2\) and each \(v \in \mathcal{S}(\mathds{Z})\) belongs to a simplex of \(\mathcal{S}\) of dimension \(r-2\). It is \textbf{strongly equidimensional} of codimension 1 (or of dimension \(r-2\)) if, stronger, each simplex of 
\(\mathcal{S}\) belongs to a simplex of dimension \(r-2\). If \(\mathcal{S}\) is strongly equidimensional of dimension \(r-2\) and moreover satisfies \stepcounter{equation}%

\subsubsection{} Each \((r-3)\)-simplex \(\sigma\) of \(\mathcal{S}\) is face of at least  two \( (r-2)\)-simplices \(\tau_{1}\) and \(\tau_{2}\) of \(\mathcal{S}\), then \(\mathcal{S}\) is called \textbf{boundaryless} 
(or \textbf{without boundary}).

If \fbox{\(r=2\)} then
\begin{center}
	\(\mathcal{S}\) is equidimensional of codimension 1 \(\Longleftrightarrow\) \(\mathcal{S}\) is zero-dimensional,
\end{center}
that is, it consists of a subset of \(\mathcal{BT}(\mathds{Z})\), and the condition \enquote{boundaryless} is empty.

If \fbox{\(r=3\)} then still simple and strong equidimensionality of codimension 1 agree, and \enquote{boundaryless} means that \(\mathcal{S}\) is a graph without endpoints.

If \fbox{\(r=4\)} then equidimensionality of codimension 1 allows subcomplexes \(\mathcal{S}\) like \begin{tikzpicture}[scale=0.3] \draw[fill=gray] (-0.5,0) -- (-1.5,-0.5) -- (-1.5,0.5) -- (-0.5,0) -- (0.5,0) -- (1.5,0.5) -- (1.5,-0.5) -- (0.5,0) -- cycle; \draw[fill=black](-0.5,0) circle (2pt) ; \draw[fill=black] (-1.5,-0.5) circle (2pt); \draw[fill=black] (-1.5,0.5) circle (2pt); \draw[fill=black] (0.5,0) circle (2pt); \draw[fill=black] (1.5,-0.5) circle (2pt); \draw[fill=black] (1.5,0.5) circle (2pt); \end{tikzpicture}. Such \(\mathcal{S}\) are excluded by requiring strong equidimensionality, but still \(\mathcal{S} = \) \begin{tikzpicture}[scale=0.3] \draw[fill=gray] (0,0) -- (-1,-0.5) -- (-1,0.5) -- (0,0) -- (1,-0.5) -- (1,0.5) -- (0,0) -- cycle; \draw[fill=black] (-1,-0.5) circle (2pt); \draw[fill=black] (-1,0.5) circle (2pt); \draw[fill=black] (0,0) circle (2pt); \draw[fill=black] (1,0.5) circle (2pt); \draw[fill=black] (1,-0.5) circle (2pt); \end{tikzpicture} is possible. This however is ruled out by requiring boundarylessness.

\subsection{} \label{Subsection.Simpliciality-of-modular-form} The modular form \(f\) for \(\Gamma\) is \textbf{simplicial} if \(\mathcal{BT}(f)\) is the set of \(\mathds{Q}\)-points of a full subcomplex of \(\mathcal{BT}\) 
(by abuse of language denoted by the same symbol \(\mathcal{BT}(f)\)) which is equidimensional of codimension 1. Once \(f\) is verified to be simplicial, we may study the vanishing set \(\Omega(f)\) and its image in the moduli 
scheme \(M^{r} = \Gamma \backslash \Omega\) via the simplicial complex \(\mathcal{BT}(f)\) and its quotient \(\Gamma \backslash \mathcal{BT}(f)\).

In the case \fbox{\(r=2\)}, simpliciality of \(f\) simply means \(\mathcal{BT}(f) \subset \mathcal{BT}(\mathds{Z})\). Assume \fbox{\(r \geq 3\)}. Then simpliciality of \(f\) is equivalent with the conjunction of conditions
(a), (b), (c) below about the set \(\mathcal{BT}(f)\). If these hold, let \(\mathcal{BT}(f)\) also denote the full subcomplex with vertex set \(\mathcal{BT}(f) \cap \mathcal{BT}(\mathds{Z})\). The conditions are:
\begin{enumerate}[label=(\alph*)]
	\item Let \(\sigma\) be an \((r-1)\)-simplex of \(\mathcal{BT}\) with interior \(\overset{\circ}{\sigma}\). Then \(\overset{\circ}{\sigma}(\mathds{Q}) \cap \mathcal{BT}(f) = \varnothing\);
	\item If \( \{ \mathbf{n}^{(0)}, \dots, \mathbf{n}^{(\ell)}\}\) is an \(\ell\)-simplex of \(\mathcal{BT}\) and \(\mathbf{x} \in \overset{\circ}{\sigma}(\mathds{Q})\), then
	\begin{center}
		\(\mathbf{x} \in \mathcal{BT}(f) \Longleftrightarrow \) for \(j = 0,1,\dots,\ell\), \(\mathbf{n}^{(j)} \in \mathcal{BT}(f)\);
	\end{center}
	\item Each \textbf{vertex} \(\mathbf{n} \in \mathcal{BT}(f) \cap \mathcal{BT}(\mathds{Z})\) belongs to a simplex of \(\mathcal{BT}(f)\) of dimension \(r-2\).
\end{enumerate}
These may be enforced through the conditions:
\begin{enumerate}[label=(\alph*)] \setcounter{enumi}{3}
	\item[(c strong)] \(\mathcal{BT}(f)\) is strongly equidimensional of codimension 1, that is, each \textbf{simplex} \(\sigma\) of \(\mathcal{BT}(f)\) belongs to a simplex of \(\mathcal{BT}(f)\) of dimension \(r-2\);
	\item \(\mathcal{BT}(f)\) is boundaryless;
	\item \(\mathcal{BT}(f)\) is connected.
\end{enumerate}

\subsection{}\label{Subsection.Conditions-to-be-shown-for-a-l-k} As easy examples show (see Example \ref{Example.W2k}), simpliciality of modular forms is not preserved under sums or (if \(r \geq 3\)) products. The more remarkable 
is the fact that all the naturally appearing modular forms \(f\) are simplicial. This has been shown in some cases for the following distinguished modular forms:
\begin{itemize}
	\item \(f = E_{k}\), the Eisenstein series of weight \(k\) (see \cite{Goss1980} or \cite{Gekeler1988});
	\item \(f = \alpha_{k}\), the para-Eisenstein series of weight \(q^{k}-1\) (\(\alpha_{k}(\boldsymbol{\omega})\) is the \(k\)-th coefficient of the expansion \( e_{\Lambda_{\boldsymbol{\omega}}}(z) = \sum_{k \geq 0} \alpha_{k}(\boldsymbol{\omega}) z^{q^{k}}\) of the exponential function \(e_{\Lambda_{\boldsymbol{\omega}}}\) of the \(A\)-lattice \(\Lambda_{\boldsymbol{\omega}}\));
	\item \(f = {}_{a}\ell_{k}\), the coefficient form as in \eqref{Eq.A-Division-Polynomial}, which has weight \(q^{k}-1\).
\end{itemize}

Namely, for \fbox{\(r=2\)} and \(f = E_{k}\) in \cite{Cornelissen1995}, \(f = \alpha_{k}\) in \cite{Gekeler1999}, \(f = {}_{a}\ell_{k}\) in \cite{Gekeler2011}, and for arbitrary \fbox{\(r \geq 2\)} and \(f = g_{k} = {}_{\tau} \ell_{k}\) in 
\cite{Gekeler2019}, \(f = E_{k}\) in [I], and \(f = \alpha_{k}\) or \(f = {}_{a}\ell_{k}\) with \(k \leq \deg a\) in [V].

It is the aim of the present study to establish simpliciality of the \({}_{a}\ell_{k}\) for all \(k\) and to sharpen all the existing results about \(\mathcal{BT}(f)\) by showing the conditions (c strong), (d), and (e) of 
\ref{Subsection.Conditions-to-be-shown-for-a-l-k} for all the forms in question. This leads to the following main theorem:

\begin{Theorem}~
	\begin{enumerate}[label=\(\mathrm{(\roman*)}\)]
		\item The modular forms \(\alpha_{k}\) are simplicial. Let \(\mathcal{BT}(k) \defeq \mathcal{BT}(\alpha_{k})\) be the corresponding full subcomplex of \(\mathcal{BT}\), with full subcomplexes
		\(\mathcal{A}(k) \defeq \mathcal{BT}(k) \cap \mathcal{A}\) and \(\mathcal{W}(k) \defeq \mathcal{BT}(k) \cap \mathcal{W}\).
		\item If \(r \geq 3\), \(\mathcal{BT}(k)\), \(\mathcal{A}(k)\), and \(\mathcal{W}(k)\) are connected.
		\item \(\mathcal{BT}(k)\), \(\mathcal{A}(k)\), and \(\mathcal{W}(k)\) are strongly equidimensional of codimension 1.
		\item \(\mathcal{BT}(k)\) and \(\mathcal{A}(k)\) are boundaryless.
		\item For a given \(a \in A\) of degree \(d \geq 1\) and \(1 \leq k < rd\) (which we from now on assume), the form \( {}_{a}\ell_{k}\) is simplicial. The corresponding subcomplex \(\mathcal{BT}({}_{a}\ell_{k})\) depends only on
		\(d\) and \(k\), and is therefore labelled \(\mathcal{BT}(d,k)\). Let \(\mathcal{A}(d,k) \defeq \mathcal{BT}(d,k) \cap \mathcal{A}\) and \(\mathcal{W}(d,k) \defeq \mathcal{BT}(d,k) \cap \mathcal{W}\).
		\item If \(k \leq d\), then \(\mathcal{W}(d,k) = \mathcal{W}(k)\), so \(\mathcal{A}(d,k) = \mathcal{A}(k)\) and \(\mathcal{BT}(d,k) = \mathcal{BT}(k)\).
		\item If \(r \geq 3\), \(\mathcal{BT}(d,k)\), \(\mathcal{A}(d,k)\), and \(\mathcal{W}(d,k)\) are connected.
		\item \(\mathcal{BT}(d,k)\), \(\mathcal{A}(d,k)\), and \(\mathcal{W}(d,k)\) are strongly equidimensional of codimension 1.
		\item \(\mathcal{BT}(d,k)\) and \(\mathcal{A}(d,k)\) are boundaryless.
		\item Let \( (\, . \,) \mapsto (\, . \,){}\hat{}\) be the simplicial involution on \(\mathcal{BT}\) that corresponds to the inversion on the Dynkin diagram of type \(A_{r-1}\), given on \(\mathcal{W}\) by
		\[
			\mathbf{x} = (x_{1}, x_{2}, \dots, x_{r} = 0) \longmapsto \mathbf{x}{}\hat{} = (x_{1} - x_{r}, x_{1} - x_{r-1}, \dots, x_{1} - x_{1} = 0)
		\]
		(which is trivial if \(r=2\)). Then \(\mathcal{W}(d,rd-k) = \mathcal{W}(d,k){}\hat{}\).
	\end{enumerate}
\end{Theorem}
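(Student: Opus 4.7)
The plan is to reduce every assertion to a combinatorial statement on the Weyl chamber $\mathcal{W}$ via the membership criterion of Theorem \ref{Theorem.Membership-via-d-characteristic-sequences}, which expresses $\mathbf{x} \in \mathcal{W}(d,k)$ in terms of the $d$-characteristic sequence attached to $\mathbf{x} \in \mathcal{W}(\mathds{Q})$. Because $\mathcal{W}$ is a fundamental domain for $\Gamma = \GL(r,A)$ on $\mathcal{BT}$ and $\lambda$ is $\Gamma$-equivariant, every property of $\mathcal{W}(\cdot)$ propagates to $\mathcal{A}(\cdot)$ and $\mathcal{BT}(\cdot)$; since by \ref{Subsection.Nomenclature-for-holomorphic-function} we have $\mathcal{BT}(f) = \Gamma \mathcal{W}(f)$, it suffices to verify each property on $\mathcal{W}$. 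The fact that the criterion depends only on $d = \deg a$ yields (v), and its specialization in the range $k \leq d$ coincides with the $\alpha_{k}$-criterion of [V], giving (i) and (vi) in one stroke.

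To establish the simpliciality conditions (a)--(c) of \ref{Subsection.Simpliciality-of-modular-form} I would observe that the $d$-characteristic sequence is constant on simplex interiors; combined with Theorem 3.2 of [V] this gives the strict non-vanishing on top-dimensional simplices required in (a), while (b) reflects that the criterion interpolates between its values at the vertices of the carrying simplex. Strong equidimensionality (iii), (viii) is proved by producing, at each simplex $\sigma \subset \mathcal{W}(d,k)$, an $(r-2)$-simplex of $\mathcal{W}(d,k)$ containing $\sigma$ via an explicit sequence of admissible vertex extensions read off the characteristic sequence. Boundarylessness (iv), (ix) requires that every $(r-3)$-simplex $\tau \subset \mathcal{BT}(d,k)$ is a face of at least two $(r-2)$-simplices of $\mathcal{BT}(d,k)$; after translating $\tau$ into $\mathcal{A}$, the two continuations arise from the two opposite directions of completion of $\tau$ to an $(r-2)$-simplex inside the apartment, and the criterion is shown stable under both.

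Connectedness (ii), (vii) for $r \geq 3$ is the most delicate step: I would fix a distinguished vertex of $\mathcal{W}(d,k)$ on which the characteristic sequence takes a canonical form (for instance on a wall where several coordinates coincide) and prove by induction on a suitable height function that every vertex of $\mathcal{W}(d,k)$ can be joined to it by an edge path inside $\mathcal{W}(d,k)$, at each step invoking the boundarylessness just established to produce a neighbour closer to the base. The symmetry (x) finally follows by a direct inspection: the involution $\mathbf{x} \mapsto \mathbf{x}{}\hat{}$ realises the Dynkin automorphism of $A_{r-1}$ on $\mathcal{W}$ and reverses the order of the characteristic sequence, while the coefficients ${}_{a}\ell_{k}$ and ${}_{a}\ell_{rd-k}$ of $\phi_{a}^{\boldsymbol{\omega}}$ are interchanged by the duality that swaps largest and smallest $a$-torsion sizes, so the $(d,k)$-criterion at $\mathbf{x}$ coincides with the $(d, rd-k)$-criterion at $\mathbf{x}{}\hat{}$. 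The principal obstacle is the connectedness step in (ii), (vii): the vanishing criterion is entirely local in the characteristic sequence, whereas connectedness is a global path-existence statement, so the argument must bootstrap combinatorial flexibility from boundarylessness into the construction of a controlled path to the base vertex.
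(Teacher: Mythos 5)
Your overall strategy---reduce everything to $\mathcal{W}$ via the membership criterion of Theorem \ref{Theorem.Membership-via-d-characteristic-sequences} and then argue combinatorially with the characteristic sequences---is indeed the paper's strategy, and your treatment of (a), (b), of items (v), (vi), and of the symmetry (x) (the involution reverses the order of the $d$-diagram, hence sends the $k$-th coincidence to the $(rd-k)$-th) matches the paper. But there are two genuine gaps. The first is the connectedness step (ii), (vii). You propose to establish boundarylessness first and then ``bootstrap'' a descent to a base vertex from it. This cannot work as stated: a boundaryless codimension-one complex can perfectly well be disconnected (two disjoint boundaryless components), and boundarylessness only guarantees \emph{some} second top-dimensional simplex through each $(r-3)$-face---it gives no control whatever over whether the new simplex decreases your height function. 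The paper's actual argument (Section \ref{Section.Connectedness}) is a direct, unconditional descent: it requires the decomposition $\mathcal{W}(d,k)(\mathds{Z}) = {}_{1}\mathcal{W} \cup {}_{5}\mathcal{W}$ of \ref{Subsection.Decomposition-of-WdkIZ}, reduction along the ray $\mathbf{n} - j\mathbf{n}_i$ into ${}_{3}\mathcal{W}$, and then an explicit case analysis (\ref{Subsection.Special-vertex-in-1Wdk}--\ref{Subsection.Fourth-special-vertex}) in which the naive subtraction of an $\mathbf{n}_i$ sometimes leaves $\mathcal{W}(d,k)$ (the case $i = r-\rho$) and one must first move sideways to a vertex of the form $t\mathbf{n}_{r-\rho} \pm \mathbf{e}_j$ before descending further. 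None of this is recoverable from boundarylessness, so your plan for the hardest item is missing its essential mechanism.

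The second gap is the blanket claim that every property of $\mathcal{W}(\cdot)$ ``propagates'' to $\mathcal{A}(\cdot)$ and $\mathcal{BT}(\cdot)$ because $\mathcal{BT}(f) = \Gamma\mathcal{W}(f)$. For connectedness this is false without further work: the $\Gamma$-translates $\gamma\mathcal{W}(d,k)$ must be shown to be chained together, and the paper does this by proving that the vertex stabilizers $\Gamma_{\mathbf{n}}$ for $\mathbf{n} \in \mathcal{W}(d,k)(\mathds{Z})$, together with the Weyl group $W$, generate all of $\Gamma = \GL(r,A)$---a matrix-group argument using the existence of an infinite half-line $\mathbf{n} + \mathds{N}_0\mathbf{n}_i$ in $\mathcal{W}(d,k)$ to capture the unipotent root groups over $A$. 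Your proposal omits this entirely. The propagation claim also fails in the opposite direction for boundarylessness: $\mathcal{W}(d,k)$ is \emph{not} boundaryless (it fails along $\bigcup_j \mathcal{W}_j$; see Examples \ref{Example.W2k}--\ref{Example.W4k}), so the ``two opposite completions'' of an $(r-3)$-simplex must be sought in $\mathcal{A}(d,k)$, where the reflected vertex $s_\sigma(\mathbf{n})$ may leave $\mathcal{W}$ and may even fail to lie in $\mathcal{A}(d,k)$, forcing the substitute constructions of Lemmas \ref{Lemma.Vertex-sandwich} and \ref{Lemma.On-vertices-and-simplices}; your one-sentence sketch does not engage with this.
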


\begin{Remarks}
	\begin{enumerate}[wide, label=(\roman*)]
		\item Items (i), (ii) and (vi) are in [V], and are included in the Main Theorem for clearness of exposition. This is also the reason for the pedantic repetition of (ii), (iii), (iv) for the complexes
		\(\mathcal{BT}(d,k)\), \(\mathcal{A}(d,k)\), and \(\mathcal{W}(d,k)\) in items (vii), (viii), and (ix).
		\item The intrinsically interesting objects are the \(\mathcal{BT}(k)\) and \(\mathcal{BT}(d,k)\), while the \(\mathcal{A}\)- and \(\mathcal{W}\)-variants are of an auxiliary nature, but important as the proofs take
		place on the \(\mathcal{A}\)- and \(\mathcal{W}\)-level.
		\item Boundarylessness may fail for \(\mathcal{W}(k)\) and \(\mathcal{W}(d,k)\) at the boundary of \(\mathcal{W}\), see Examples \ref{Example.W2k}, \ref{Example.W3k}, \ref{Example.W4k}.
	\end{enumerate}
\end{Remarks}

\subsection{} Now we give a brief outline of the paper. We first establish the mentioned vanishing criterion Theorem \ref{Theorem.Membership-via-d-characteristic-sequences}. Together with Proposition \ref{Proposition.Characterization-of-Wdk-as-set-of-real-points}, 
it reduces the study of the vanishing sets \(\mathcal{BT}(k) = \mathcal{BT}(\alpha_{k})\) and \(\mathcal{BT}(d,k) = \mathcal{BT}({}_{a}\ell_{k})\) to combinatorial questions in the simplicial complex \(\mathcal{BT}\). In 
Section \ref{Section.Wkd-as-subcomplex-of-W} we collect some properties of
\(\mathcal{BT}\), mainly the interplay of root data and the simplicial structure. This enables us to show that \(\mathcal{BT}({}_{a}\ell_{k})\) is in fact (the set of \(\mathds{Q}\)-points of) a full subcomplex of \(\mathcal{BT}\)
(Proposition \ref{Proposition.Characterization-of-Wdk-as-set-of-real-points}).

In Section \ref{Section.Diagrams-of-vertices} the concept of the diagram \(\diag(\mathbf{n})\) or \(d\)-diagram \(\diag^{(d)}(\mathbf{n})\) of a vertex \(\mathbf{n}\) of \(\mathcal{A}\) is introduced. It encodes the properties of 
\(\mathbf{n}\) relevant to deciding whether or not \(\mathbf{n}\) belongs to \(\mathcal{A}(k)\) (resp. \(\mathcal{A}(d,k)\)). After these preparations we can show in Section \ref{Section.Connectedness} the first significant result, 
the connectedness of \(\mathcal{BT}(d,k)\) (Theorem \ref{Theorem.Simplicial-complexes-connected}). It depends on the decomposition \ref{Subsection.Decomposition-of-WdkIZ} of \(\mathcal{W}(d,k)\), an important technical 
step, which is also crucial for establishing properties (c strong) and (d) of \ref{Subsection.Conditions-to-be-shown-for-a-l-k} for \(\mathcal{BT}(d,k)\). These are shown in Section \ref{Section.Strong-equidimensionality} 
(Theorem \ref{Theorem.Simplicial-complexes-strongly-equidimensional}) and \ref{Section.Boundarylessness} (Theorem \ref{Theorem.Simplicial-complexes-boundaryless}), respectively.

The proofs are elementary in a technical sense; the difficulty is to find and elaborate the right structures and criteria that allow to reduce the vast number of cases to a managable level. Besides the mentioned ingredients, we 
basically need to control the behavior of the \(d\)-diagram \(\diag^{(d)}(\mathbf{n})\) under replacing the vertex \(\mathbf{n}\) by one of its neighbors \(\mathbf{n}'\). A systematic study of this question would be laborious,
boring, and partially superfluous; therefore we treat only some special cases, embedded in the text where they are needed (Propositions \ref{Proposition.Sums-of-vertices}, \ref{Proposition.d-characteristic-sequences-of-certain-vertices}, 5.4, items \ref{Subsection.Special-vertex-in-1Wdk}, \ref{Subsection.Second-special-vertex}, \ref{Subsection.Fourth-special-vertex}, Lemmas \ref{Lemma.Boxes-of-sums-of-certain-vertices}, \ref{Lemma.Membership-of-sums-of-vertices}).

In Section \ref{Section.Involution} we describe the involution \( (\,.\,){}\hat{}\) on \(\mathcal{BT}\) and show the symmetry property (x) of the Main Theorem. The final Section \ref{Section.Examples} is devoted to some examples and to concluding remarks.

\begin{Note}[on notation]
	Quite generally, we use the same symbol, say \(\mathcal{S}\), for a simplicial complex \(\mathcal{S}\) and its realization \(\mathcal{S}(\mathds{R})\). In practice, \(\mathcal{S}\) will be one of \(\mathcal{BT}\), \(\mathcal{A}\), 
	\(\mathcal{W}\) or \(\mathcal{BT}(f)\), \(\mathcal{A}(k)\), \(\mathcal{W}(f)\), see \ref{Subsection.Nomenclature-for-holomorphic-function}. As only the \(\mathds{Q}\)-points of \(\mathcal{S}\) play a role, we will often not distinguish in notation between the complex 
	\(\mathcal{S}\), its set \(\mathcal{S}(\mathds{R})\)
	of real points, and the set \(\mathcal{S}(\mathds{Q})\) of points with rational barycentric coordinates, and simply write \enquote{\(\mathcal{S}\)} for all three. It should always be clear from the context which meaning is intended.
	
	Note that, e.g., the apartment \(\mathcal{A}\) is at the same time a full subcomplex of \(\mathcal{BT}\), a (rational or real) point set, and an affine euclidean space (made a rational or real vector space by fixing the origin \(\mathbf{0}\)).
	For the sake of simplicity, all these structures, although phenomenologically different, will often be denoted by the same symbol. Also the rank \(r \in \mathds{N}\) which determines \(\mathcal{BT} = \mathcal{BT}^{r}\),
	\(\mathcal{A} = \mathcal{A}^{r}\) etc., will mostly be omitted from notation.
\end{Note}

\section{The vanishing criterion for \({}_{a}\ell_{k}\)} \label{Section.Vanishing-criterion-for-a-l-k}

\subsection{} In what follows, we fix an element of \(A\) of degree \(d \in \mathds{N}\). (Recall that \(\mathds{N} = \{1,2,3,\dots\}\), while we use \(\mathds{N}_{0}\) for \( \{0,1,2,\dots\}\).) The function \({}_{a}\ell_{k}\) on \(\Omega\), where
\(1 \leq k \leq rd\), is modular of weight \(q^{k}-1\) and type 0 for \(\Gamma = \GL(r,A)\). Our starting point is Theorem 3.2 in [V], which states:
\begin{quote}
	\(\mathbf{x} \in \mathcal{BT}(\mathds{Q})\) belongs to \(\mathcal{BT}({}_{a}\ell_{k})\) if and only if for one (or for each) \(\omega \in \Omega_{\mathbf{x}}\) the \(\mathds{F}\)-lattice \( {}_{a}\phi^{\boldsymbol{\omega}}\) is
	\(k\)-inseparable.
\end{quote}
As it is important for the sequel, we break down the latter statement.

\subsection{} For \(\boldsymbol{\omega} = (\omega_{1}, \dots, \omega_{r} = 1) \in \Omega\), let \(\Lambda_{\boldsymbol{\omega}}\) be the \(A\)-lattice
\[
	\Lambda_{\boldsymbol{\omega}} = \sum_{1 \leq i \leq r} A\omega_{i}
\]
with exponential function
\[
	e_{\boldsymbol{\omega}}(z) = z \sideset{}{^{\prime}} \prod_{\mathbf{a} \in A^{r}}  \left( 1 - \frac{z}{\mathbf{a}\boldsymbol{\omega}} \right)
\]
and associated Drinfeld module \(\phi^{\boldsymbol{\omega}}\). (Here \(\mathbf{a}\boldsymbol{\omega} = \sum_{1 \leq i \leq t} a_{i} \omega_{i}\), and \(\textstyle\sideset{}{^{\prime}}\prod\) is the product over the non-zero \(\mathbf{a}\).)
Its \(a\)-torsion is
\begin{equation}
	{}_{a}\phi^{\boldsymbol{\omega}} \defeq \{ z \in C_{\infty} \mid \phi_{a}^{\boldsymbol{\omega}}(z) = 0\}.
\end{equation}
This is an \(\mathds{F}\)-vector space of dimension \(rd\), with basis
\begin{equation}
	{}_{a} B_{\boldsymbol{\omega}} \defeq \{ e_{s,i}(\boldsymbol{\omega}) \mid 0 \leq s < d, 1 \leq i \leq r \},
\end{equation}
where
\begin{equation}
	e_{s,i}(\boldsymbol{\omega}) \defeq e_{\boldsymbol{\omega}}(T^{s}\omega_{i}/a).
\end{equation}
Suppose that \(\boldsymbol{\omega} \in \mathbf{F}\), and arrange the elements of \( {}_{a}B_{\boldsymbol{\omega}}\) in increasing order \( \lvert \lambda_{1} \rvert \leq \lvert \lambda_{2} \rvert \leq \dots \leq \lvert \lambda_{rd} \rvert\)
where, if \( \lvert e_{s,i}(\boldsymbol{\omega}) \rvert = \lvert e_{s',i'}(\boldsymbol{\omega}) \rvert\), \(e_{s,i}(\boldsymbol{\omega})\) is decreed to come before \(e_{s',i'}(\boldsymbol{\omega})\) if and only if \(i > i'\). Then, e.g.,
\(\lambda_{1} = e_{0,r}(\boldsymbol{\omega})\) and \(\lambda_{rd} = e_{d-1,1}(\boldsymbol{\omega})\), while the position of the intermediate \(e_{s,i}(\boldsymbol{\omega})\) depends on \(\boldsymbol{\omega}\). Now
\({}_{a}\phi^{\boldsymbol{\omega}}\) is \(k\)-\textbf{inseparable} means that \(\lvert \lambda_{k} \rvert = \lvert \lambda_{k+1} \rvert\). This is still difficult to handle, but the following observation is helpful.

\begin{Proposition} \label{Proposition.Monotonicity-of-modular-form-map}
	Let \(\boldsymbol{\omega} \in \mathbf{F}\). The map
	\begin{align*}
		\{ T^{s} \omega_{i} \mid 0 \leq s < d, 1 \leq i \leq r \} 			&\longrightarrow {}_{a}\phi^{\boldsymbol{\omega}} \\
																			T^{s}\omega_{i}	&\longmapsto e_{s,i}(\boldsymbol{\omega})
	\end{align*}
	is strictly monotonically increasing in the sense that \( \lvert T^{s} \omega_{i} \rvert \leq \lvert T^{s'} \omega_{i'} \rvert\) implies \( \lvert e_{s,i}(\boldsymbol{\omega}) \rvert \leq \lvert e_{s',i'}(\boldsymbol{\omega}) \rvert\), with
	(in-)equality on the left hand side implying (in-)equality in \( \lvert e_{s,i}(\boldsymbol{\omega}) \rvert \leq \lvert e_{s',i'}(\boldsymbol{\omega}) \rvert\).
\end{Proposition}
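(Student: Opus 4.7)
The strategy is to express $|e_{\boldsymbol{\omega}}(T^{s}\omega_{i}/a)|$ as an explicit function of $|T^{s}\omega_{i}|$ alone, from which the stated monotonicity is manifest. The two ingredients are the Euler product for the exponential and the non-archimedean orthogonality of $\{\omega_{r},\dots,\omega_{1}\}$ guaranteed by $\boldsymbol{\omega}\in\mathbf{F}$.

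Starting from $e_{\boldsymbol{\omega}}(z)=z\sideset{}{^{\prime}}\prod_{\mathbf{a}}(1-z/\mathbf{a}\boldsymbol{\omega})$ and splitting the factors according to whether $|\mathbf{a}\boldsymbol{\omega}|$ is less than, equal to, or greater than $|z|$ (the last contributing $1$ by the ultrametric inequality), one obtains
\[
|e_{\boldsymbol{\omega}}(z)|\;=\;|z|\cdot\!\prod_{0\neq\lambda,\,|\lambda|<|z|}\!\frac{|z|}{|\lambda|}\;\cdot\;\prod_{|\lambda|=|z|}|1-z/\lambda|,
\]
where $\lambda$ runs through $\Lambda_{\boldsymbol{\omega}}\setminus\{0\}$. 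Since $\boldsymbol{\omega}\in\mathbf{F}$, the successive-minimum property ([I] 1.13, \cite{Gekeler2019} Sect.\ 3) gives the non-archimedean orthogonality $|c_{1}\omega_{1}+\cdots+c_{r}\omega_{r}|=\max_{j}|c_{j}||\omega_{j}|$ for every $(c_{j})\in C_{\infty}^{r}$; in particular $|\mathbf{a}\boldsymbol{\omega}|=\max_{j}|a_{j}||\omega_{j}|$.

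The crux is showing that every factor of the third (boundary) product is $1$ when $z=T^{s}\omega_{i}/a$. Here $|z|=q^{s-d}|\omega_{i}|<|\omega_{i}|$ because $s<d$, so any lattice point $\lambda=\sum_{j}a_{j}\omega_{j}$ with $|\lambda|=|z|$ satisfies $|a_{j}||\omega_{j}|\leq|z|<|\omega_{i}|$ for each $j$ with $a_{j}\neq 0$, forcing $|\omega_{j}|<|\omega_{i}|$ and hence $a_{i}=0$. Therefore $z-\lambda=(T^{s}/a)\omega_{i}+\sum_{j\neq i}(-a_{j})\omega_{j}$ carries the nonzero $K$-coefficient $T^{s}/a$ on $\omega_{i}$, and orthogonality applied to these $C_{\infty}$-coefficients yields $|z-\lambda|=\max(|z|,|\lambda|)=|z|$, whence $|1-z/\lambda|=|z-\lambda|/|\lambda|=1$.

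Combining the two previous paragraphs,
$|e_{\boldsymbol{\omega}}(T^{s}\omega_{i}/a)|=|z|^{N+1}\big/\prod_{0\neq\lambda,\,|\lambda|<|z|}|\lambda|$,
where $N=\#\{0\neq\lambda:|\lambda|<|z|\}$. This expression depends only on $|z|=q^{-d}|T^{s}\omega_{i}|$; as $|z|$ grows, the integer $N+1$ is non-decreasing and the base $|z|$ strictly increases, so the whole expression is strictly increasing in $|z|$ (and constant on level sets). This delivers the claim: $|T^{s}\omega_{i}|\leq|T^{s'}\omega_{i'}|$ implies $|e_{s,i}(\boldsymbol{\omega})|\leq|e_{s',i'}(\boldsymbol{\omega})|$, with (in)equality preserved. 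The only delicate point is the boundary analysis of the third paragraph, which must invoke orthogonality twice—once over $A$ (to pin down the support of admissible $\lambda$) and once over $C_{\infty}$ (to rule out cancellation in $z-\lambda$); once this is in hand, the remainder is a direct calculation.
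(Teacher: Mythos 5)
Your proof is correct and follows essentially the same route as the paper: expand $e_{s,i}(\boldsymbol{\omega})$ via the Euler product, use $\boldsymbol{\omega}\in\mathbf{F}$ and orthogonality to show the factors with $\lvert\mathbf{a}\boldsymbol{\omega}\rvert\geq\lvert z\rvert$ contribute $1$, and observe that the resulting expression is a strictly increasing function of $\lvert z\rvert=\lvert T^{s}\omega_{i}/a\rvert$ alone (your boundary analysis merely spells out the paper's one-line appeal to \cite{Gekeler2019} Proposition 3.1). One small imprecision: orthogonality of a successive minimum basis gives $\lvert\sum c_{j}\omega_{j}\rvert=\max_{j}\lvert c_{j}\rvert\,\lvert\omega_{j}\rvert$ for coefficients $c_{j}\in K_{\infty}$, not for arbitrary $c_{j}\in C_{\infty}$ as you state; since your coefficients $T^{s}/a$ and $-a_{j}$ lie in $K$, the argument is unaffected.
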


\begin{proof}
	With notations as before, 
	\begin{equation}
		e_{s,i}(\boldsymbol{\omega}) = \frac{T^{s}\omega_{i}}{a} \sideset{}{^{\prime}} \prod_{\mathbf{a} \in A^{r}} \left( 1 - \frac{T^{s}\omega_{i}/a}{\mathbf{a}\boldsymbol{\omega}} \right).
	\end{equation}
	For one of the factors of \(\sideset{}{^{\prime}}\prod\), the absolute value is
	\begin{equation}
		\left\lvert 1- \frac{T^{s}\omega_{i}/a}{\mathbf{a}\boldsymbol{\omega}} \right\rvert = \begin{cases} 1,	&\text{if } \lvert \mathbf{a} \boldsymbol{\omega} \geq \lvert T^{s}\omega_{i}/a \rvert, \\ \left\lvert \frac{T^{s}\omega_{i}/a}{\mathbf{a}\boldsymbol{\omega}} \right \rvert,	&\text{if } \lvert \mathbf{a} \boldsymbol{\omega} \rvert < \lvert T^{s} \omega_{i}/ a \rvert. \end{cases}
	\end{equation}
	Here the value 1 in case \( \lvert \mathbf{a} \boldsymbol{\omega} \rvert = \lvert T^{s}\omega_{i}/a \rvert\) results from \(\boldsymbol{\omega} \in \mathbf{F}\) and the orthogonality of the set
	\( \{ \omega_{1}, \dots, \omega_{r} \}\), see, e.g., \cite{Gekeler2019} Proposition 3.1. If we put for \(x \in \mathds{Q}_{\geq 0}\):
	\begin{equation}
		\varphi(x) \defeq x \sideset{}{^{\prime}} \prod_{\substack{\mathbf{a} \in A^{r} \\ \lvert \mathbf{a} \boldsymbol{\omega} \rvert < x}} \frac{x}{\lvert \mathbf{a} \boldsymbol{\omega} \rvert},
	\end{equation}
	then by the above, \(\lvert e_{s,i}(\boldsymbol{\omega}) = \varphi(\lvert T^{s}\omega_{i}/a \rvert ) \). But \(\varphi\) is certainly monotonically increasing, whence the result.
\end{proof}

\subsection{} \label{Subsection.Detectability-of-k-inseparability} The proposition means that we can detect \(k\)-inseparability of \({}_{a}\phi^{\boldsymbol{\omega}}\) by means of the (simple) family \( \{ T^{s} \omega_{i} \}\), 
or rather on its logarithms \( \{x_{i} + s\} \), where
\begin{equation}
	x_{i} = \log \omega_{i}, \quad \mathbf{x} = (x_{1}, \dots, x_{r}) = \lambda(\boldsymbol{\omega}) \in \mathcal{W}(\mathds{Q}),
\end{equation}
instead of the (complicated) family \( \{e_{s,i}(\boldsymbol{\omega}) \} \).

\begin{Definition} \label{Definition.d-characteristic-sequence}
	Let \(\mathbf{x} = (x_{1}, \dots, x_{r}) \in \mathcal{W}(\mathds{Q})\), so \(x_{1} \geq x_{2} \geq \dots \geq x_{r} = 0\). Consider the unique map
	\begin{align}
		v^{(d)} \colon \{ 1,2, \dots, rd \} 	&\longrightarrow \{ x_{i} + s \mid 1 \leq i \leq r, 0 \leq s < d \} \\
														k		&\longmapsto v_{k}^{(d)} \nonumber
	\end{align}
	which is monotonically increasing and satisfies
	\[
		\#(v^{(d)})^{-1}(x_{i} + s) = \# \{ (s', i') \mid x_{i'} + s' = x_{i} + s \}.
	\]
	That is, the sequence \( (v_{1}^{(d)}, v_{2}^{(d)}, \dots, v_{rd}^{(d)} )\) is the multiset \( \{x_{i} + s\} \), ordered increasingly, and taking multiplicities into account. We call
	 \( (v_{1}^{(d)}, \dots, v_{rd}^{(d)}) = (v_{1}^{(d)}(\mathbf{x}), \dots, v_{rd}^{(d)}(\mathbf{x}))\) the \(d\)-\textbf{characteristic sequence} of \(\mathbf{x}\).
\end{Definition}

Now we can state Theorem 3.2 of [V] as follows.

\begin{Theorem} \label{Theorem.Membership-via-d-characteristic-sequences}
	Let \(\mathbf{x} \in \mathcal{W}(\mathds{Q})\) be as before and \(1 \leq k < rd\). Then \(\mathbf{x} \in \mathcal{W}({}_{a}\ell_{k})\) (that is, there exists \(\mathbf{\omega} \in \mathbf{F}_{\mathbf{x}} = \lambda^{-1}(\mathbf{x})\) 
	such that \({}_{a}\ell_{k}(\mathbf{\omega}) = 0\)) if and only if \( v_{k}^{(d)}(\mathbf{x}) = v_{k+1}^{(d)}(\mathbf{x})\).
\end{Theorem}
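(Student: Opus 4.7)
The plan is to chain together the three ingredients already assembled in this section: Theorem 3.2 of [V], which characterises membership in $\mathcal{W}({}_a\ell_k)$ via $k$-inseparability of ${}_a\phi^{\boldsymbol{\omega}}$; Proposition \ref{Proposition.Monotonicity-of-modular-form-map}, which transports questions about the sizes of the torsion basis $\{e_{s,i}(\boldsymbol{\omega})\}$ into questions about the sizes of the elementary quantities $\{T^s\omega_i\}$; and Definition \ref{Definition.d-characteristic-sequence}, which encodes the latter combinatorially in the $d$-characteristic sequence. The theorem will then follow by a direct unwinding, with no additional analytic input.

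First I would pick any $\boldsymbol{\omega} \in \mathbf{F}_{\mathbf{x}} = \lambda^{-1}(\mathbf{x})$ and recall that by Theorem 3.2 of [V], $\mathbf{x} \in \mathcal{W}({}_a\ell_k)$ is equivalent to ${}_a\phi^{\boldsymbol{\omega}}$ being $k$-inseparable, i.e., to $|\lambda_k| = |\lambda_{k+1}|$ with $\lambda_1,\dots,\lambda_{rd}$ denoting ${}_aB_{\boldsymbol{\omega}}$ ordered as in Subsection 2.2. The tie-breaking by index intervenes only in labelling basis elements within a level set; the condition $|\lambda_k|=|\lambda_{k+1}|$ depends only on the multiset of absolute values $\{|e_{s,i}(\boldsymbol{\omega})|\}$ arranged increasingly with multiplicity, and asks whether its $k$-th and $(k+1)$-th entries coincide. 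Next, I would invoke Proposition \ref{Proposition.Monotonicity-of-modular-form-map}, which provides a bijection $T^s\omega_i \mapsto e_{s,i}(\boldsymbol{\omega})$ preserving both strict and non-strict ordering of absolute values. Hence the sorted multisets $\{|e_{s,i}(\boldsymbol{\omega})|\}$ and $\{|T^s\omega_i|\}$ are isomorphic as totally ordered multisets, and in particular the $k$-th and $(k+1)$-th members of one agree iff those of the other do.

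To finish, I would take logarithms: since $\log|T^s\omega_i| = x_i + s$, the sorted multiset $\{|T^s\omega_i|\}$ corresponds term-by-term to $\{x_i+s : 0 \leq s < d,\, 1 \leq i \leq r\}$ sorted increasingly with multiplicity, which by Definition \ref{Definition.d-characteristic-sequence} is exactly the $d$-characteristic sequence $\bigl(v_1^{(d)}(\mathbf{x}),\dots,v_{rd}^{(d)}(\mathbf{x})\bigr)$. The equality of its $k$-th and $(k+1)$-th terms is precisely $v_k^{(d)}(\mathbf{x}) = v_{k+1}^{(d)}(\mathbf{x})$, which closes the chain of equivalences. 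No step presents a real obstacle: the substantive content is already absorbed into the cited Theorem 3.2 of [V] and Proposition \ref{Proposition.Monotonicity-of-modular-form-map}, and the only point warranting care is the observation that the idiosyncratic tie-breaking rule of Subsection 2.2 is invisible to the equality test $|\lambda_k|=|\lambda_{k+1}|$.
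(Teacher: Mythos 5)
Your proposal is correct and follows exactly the route the paper takes: its proof is a one-line reduction to Theorem 3.2 of [V], Proposition \ref{Proposition.Monotonicity-of-modular-form-map}, and Definition \ref{Definition.d-characteristic-sequence} via the monotonicity of the logarithm, which is precisely the chain you spell out. Your added remark that the tie-breaking convention is invisible to the equality test $\lvert\lambda_k\rvert = \lvert\lambda_{k+1}\rvert$ is a sensible point of care that the paper leaves implicit.
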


\begin{proof}
	This follows from the cited theorem and \ref{Proposition.Monotonicity-of-modular-form-map}-\ref{Definition.d-characteristic-sequence}, using the monotonicity of the logarithm.
\end{proof}

\begin{Remarks}
	\begin{enumerate}[wide, label=(\roman*)]
		\item There is no condition for \(k = rd\). Actually, \({}_{a}\ell_{rd}\) is a power of the discriminant function \(\Delta \defeq {}_{T}\ell_{r} = g_{r}\) and vanishes nowhere on \(\Omega\).
		\item The dependence on \(a\) enters only via \(d \defeq \deg a\). (This has already been observed in [V] Proposition 3.8(i).) Therefore we now write \(\mathcal{W}(d,k)\) for the subset \(\mathcal{W}({}_{a}\ell_{k})\) 
		of \(\mathcal{W}(\mathds{Q})\). (For the moment this is merely a set. Its simplicial structure will be studied in the next section.)
		\item The present \(d\)-characteristic sequence of \(\mathbf{x}\) is strongly related to the characteristic sequence of \( {}_{d}B_{\mathbf{x}}\) as defined in [V] 3.9. The difference is that we now make use of the
		simplification that results from replacing \(e_{s,i}(\boldsymbol{\omega})\) by \(T^{s}\omega_{i}\) or its logarithm \(x_{i}+s\).
	\end{enumerate}
\end{Remarks}

\subsection{} Let \(d\) tend to infinity and consider the map
\begin{align}
	v \colon \mathds{N} 	&\longrightarrow \{ x_{i} + s \mid 1 \leq i \leq r, s \in \mathds{N}_{0} \} \\
									k		&\longmapsto v_{k} = v_{k}(\mathbf{x}) \nonumber
\end{align}
defined analogously, i.e., \(v_{k} = \lim_{d \to \infty} v_{k}^{(d)}\). Then \( (v_{1}, v_{2}, \dots )\) is the \(q\)-logarithm of \( (\nu_{\mathbf{x}}(\lambda_{k}))_{k \in \mathds{N}} \), where \( (\lambda_{k})_{k \in \mathds{N}}\) is the 
characteristic sequence of \(\mathbf{x}\) as defined in [V] 2.4 that controls the zero sets \( \mathcal{W}(k) = \mathcal{W}(\alpha_{k}) \) of the para-Eisenstein series \(\alpha_{k}\). From now on, we use the notion 
\enquote{characteristic sequence of \(\mathbf{x}\)} only for the logarithmic variant \( (v_{k})_{k \in \mathds{N}}\). It enables us to give the following synopsis of [II] Theorem 4.8 and [V] 2.4, 2.8, 2.9, 2.10:
\subsection{} \label{Subsection.Characterization-of-k-inseparability} Fix \(k \in \mathds{N}\) and \(\mathbf{x} \in \mathcal{W}(\mathds{Q})\). There is equivalence between 
\begin{enumerate}[label=(\alph*)]
	\item \(\mathbf{x}\) is \(k\)-inseparable;
	\item for each \(\boldsymbol{\omega} \in \mathbf{F}_{\mathbf{x}} = \lambda^{-1}(\mathbf{x})\), the lattice \(\Lambda_{\boldsymbol{\omega}}\) is \(k\)-inseparable;
	\item there exists \(\boldsymbol{\omega} \in \mathbf{F}_{\mathbf{x}}\) such that \(\alpha_{k}(\boldsymbol{\omega})\) vanishes;
	\item \(\mathbf{x} \in \mathcal{W}(k)\);
	\item \(v_{k}(\mathbf{x}) = v_{k+1}(\mathbf{x})\).
\end{enumerate}
Moreover, the set \(\mathcal{W}(k)\) of all \(\mathbf{x} \in \mathcal{W}(\mathds{Q})\) with properties (a)-(e) is the set of \(\mathds{Q}\)-points of a full subcomplex (labelled by the same symbol \(\mathcal{W}(k)\)) of 
\(\mathcal{W}\), which is equidimensional of codimension 1.

\begin{Remark} \label{Remark.Coincidence-of-Wk-and-Wdk}
	The addendum just stated says that \( \alpha_{k} \) is simplicial. It is easily seen that \( v_{k}^{(d)} = v_{k} \) for \( k \leq d+1\). Therefore, by Theorem \ref{Theorem.Membership-via-d-characteristic-sequences} and \ref{Subsection.Characterization-of-k-inseparability}, the sets (and therefore the simplicial complexes)
	\(\mathcal{W}(d,k)\) and \(\mathcal{W}(k)\) must agree as long as \(k \leq d = \deg a\). This is essentially the content of [V] Theorem 3.12, but the proof given there is considerably more complicated, as the author wasn't aware of the 
	simplification provided by Proposition \ref{Proposition.Monotonicity-of-modular-form-map}.
\end{Remark}

\section{\(\mathcal{W}(k,d) = \mathcal{W}({}_{a}\ell_{k})\) as a subcomplex of \(\mathcal{W}\)} \label{Section.Wkd-as-subcomplex-of-W}

From now on, we suppose that \(r \geq 3\). We want to verify the conditions (a) to (e) of \ref{Subsection.Conditions-to-be-shown-for-a-l-k} for \(f = {}_{a}\ell_{k}\) and, as far as this has not yet been done, for \(f = \alpha_{k}\). Using the
action of \(\Gamma\), we may work in the subcomplex \(\mathcal{A}\) or even \(\mathcal{W}\) instead of \(\mathcal{BT} = \mathcal{BT}^{r}\).

\subsection{} First we collect some facts about the simplicial complex \(\mathcal{A} = \mathcal{A}^{r}\). It is useful to have these in mind even though we will not all of them use in a technical sense.

We briefly write \(\mathbf{n}\) for the class \( [L_{\mathbf{n}}] \in \mathcal{A}(\mathds{Z})\), where \(\mathbf{n} = (n_{1}, \dots, n_{r-1}, 0)\) with \(n_{i} \in \mathds{Z}\). Thus \(\mathcal{A}(\mathds{Z}) \cong \mathds{Z}^{r-1}\)
forms a \(\mathds{Z}\)-lattice in the euclidean space \(\mathcal{A}(\mathds{R}) \cong \mathds{R}^{r-1}\), the lattice of \textbf{coroots}. For \(1 \leq i,j \leq r\), \(i \neq j\), define the \textbf{root}
\begin{align}
	\boldsymbol{\alpha}_{i,j} \colon \mathcal{A}(\mathds{R})	&\longrightarrow \mathds{R} \\
																				\mathbf{x}				&\longmapsto x_{i} - x_{j} \nonumber
\end{align} \stepcounter{subsubsection}%
and for \(1 \leq i < r\) the \textbf{simple root} \(\boldsymbol{\alpha}_{i} \defeq \boldsymbol{\alpha}_{i,i+1}\). We regard \(\mathcal{A}(\mathds{R})\) as the sub-algebra \(\mathfrak{t}\) of diagonal elements in the simple
Lie algebra \(\mathfrak{pgl}(r, \mathds{R})\). Standard theory of Lie algebras and root systems (our prefered reference is \cite{Humphreys1972}) provides the scalar product \( (\,.\,,\,.\,)\) on \(\mathfrak{t}\), derived from the
Killing form, and thus the euclidean structure on \(\mathcal{A}(\mathds{R})\). Then
\begin{equation}
	\Phi \defeq \{ \boldsymbol{\alpha}_{i,j} \mid 1 \leq i \neq j \leq r \}
\end{equation} \stepcounter{subsubsection}%
is a root system of type \(A_{r-1}\) in the dual space \(\mathcal{A}(\mathds{R})^{*} = \mathfrak{t}^{*}\) with \( \{\boldsymbol{\alpha}_{i} \mid 1 \leq i < r\}\) as a root basis and the decomposition \( \Phi = \Phi^{+} \cupdot \Phi^{-}\) into
positive and negative roots, where \( \Phi^{+} = \{ \boldsymbol{\alpha}_{i,j} \mid i < j\}\). We shall not need the precise description of \( (\,.\,,\,.\,)\), which may be looked up in \cite{Humphreys1972}. Obviously,
\begin{equation}
	\mathcal{A}(\mathds{Z}) = \{ \mathbf{x} \in \mathcal{A}(\mathds{R}) \mid \forall \boldsymbol{\alpha} \in \Phi : \boldsymbol{\alpha}(\mathbf{x}) \in \mathds{Z} \},
\end{equation}\stepcounter{subsubsection}%
which explains its description as lattice of coroots, and 
\begin{equation} \label{Eq.Weyl-chamber-of-realization}
	\mathcal{W}(\mathds{R}) = \{ \mathbf{x} \in \mathcal{A}(\mathds{R}) \mid \boldsymbol{\alpha}_{i}(\mathbf{x}) \geq 0 \text{ for } i=1,2, \dots, r-1 \}
\end{equation}\stepcounter{subsubsection}%
with \(i\)-th wall
\[
	\mathcal{W}_{i}(\mathds{R}) = \{ \mathbf{x} \in \mathcal{W}(\mathds{R}) \mid \boldsymbol{\alpha}_{i}(\mathbf{x}) = 0\}.
\]
For each \(\boldsymbol{\alpha} \in \Phi\) we let
\begin{equation}
	H_{\boldsymbol{\alpha}} \defeq \{ \mathbf{x} \in \mathcal{A}(\mathds{R}) \mid \boldsymbol{\alpha}(\mathbf{x}) = 0\}.
\end{equation}\stepcounter{subsubsection}%
Now \( (\,.\,,\,.\,) \) identifies the dual \(\mathcal{A}(\mathds{R})^{*}\) with \(\mathcal{A}(\mathds{R})\); therefore we regard \(\Phi\) as a subset of \(\mathcal{A}(\mathds{R})\) and the numbers \(\boldsymbol{\alpha}(\mathbf{x})\)
in the above formulas as scalar products \( (\boldsymbol{\alpha}, \mathbf{x})\). The following facts for root systems of type \(A_{r-1}\) are well-known (see \cite{Humphreys1972}):
\subsubsection{} All the roots \(\boldsymbol{\alpha} \in \Phi\) have equal length;
\subsubsection{} Let \(\mathds{Z}\Phi\) be the \(\mathds{Z}\)-lattice generated by \(\Phi\). Then \(\mathds{Z}\Phi\) is contained in \(\mathcal{A}(\mathds{Z})\) with index \(r\);
\subsubsection{} The \textbf{Cartan numbers} \(\langle \boldsymbol{\alpha}, \boldsymbol{\beta} \rangle \defeq 2 (\boldsymbol{\alpha}, \boldsymbol{\beta})/(\boldsymbol{\beta}, \boldsymbol{\beta})\) take only values in
\( \{0, \pm 1, \pm 2\}\), where \( \langle \boldsymbol{\alpha}, \boldsymbol{\beta} \rangle = \pm 2\) corresponds to \(\boldsymbol{\alpha} = \pm \boldsymbol{\beta}\). This implies that non-proportional roots are either orthogonal
or define an angle of \(60 \si{\degree}\) or \(120 \si{\degree}\). \stepcounter{equation}\stepcounter{equation}\stepcounter{equation}%

For \( 1 \leq i < r \) define \(\mathbf{n}_{i} \defeq (1,1,\dots, 1,0,\dots,0) \in \mathcal{W}(\mathds{Z})\), with \(i\) 1's. Then
\begin{equation} \label{Eq.Fundamental-dominant-weights}
	(\boldsymbol{\alpha}_{i}, \mathbf{n}_{j}) = \delta_{i,j},
\end{equation} \stepcounter{subsubsection}%
(in the language of root systems, the \(\mathbf{n}_{i}\) are the \textbf{fundamental dominant weights}), and \(\{\mathbf{n}_{i} \mid 1 \leq i < r\}\) is an \(\mathds{N}_{0}\)-basis of the monoid \(\mathcal{W}(\mathds{Z})\). 
Occasionally, we also write \(\mathbf{n}_{0}\) for the zero vector \(\mathbf{0} = (0,\dots,0)\). We further let \(\mathbf{e}_{i} \defeq (0,\dots,0,1,0,\dots,0)\) be the \(i\)-th standard basis vector of \(\mathcal{A}(\mathds{Z})\), 
\(1 \leq i < r\). Throughout, \enquote{\(\leq\)} denotes the product order on \(\mathcal{A}(\mathds{Z})\),
\begin{equation}
	\mathbf{m} \leq \mathbf{n} \Longleftrightarrow \text{ for } 1 \leq i < r, m_{i} \leq n_{i}.
\end{equation} \stepcounter{subsubsection}%
If \(\mathbf{m} \leq \mathbf{n}\), we write \([\mathbf{m}, \mathbf{n}] \defeq \{ \mathbf{a} \in \mathcal{A}(\mathds{Z}) \mid \mathbf{m} \leq \mathbf{a} \leq \mathbf{n}\}\). Then we have:
\subsubsection{} \label{Subsubsection.Vertices-are-neighbors} Two vertices \(\mathbf{m} \neq \mathbf{n}\) of \(\mathcal{A}\) are neighbors if and only if \(\mathbf{m} < \mathbf{n}\) or \(\mathbf{n} < \mathbf{m}\) 
and for all \(i\), \(\lvert m_{i} - n_{i} \rvert \in \{0,1\}\). By definition this means that the \textbf{combinatorial distance} \(d(\mathbf{m}, \mathbf{n})\) equals 1. For general \(\mathbf{m}\), \(\mathbf{n}\), \(d(\mathbf{m}, \mathbf{n})\) 
is defined as the length of the shortest edge path joining \(\mathbf{m}\) and \(\mathbf{n}\). (Note that \(d(\mathbf{m}, \mathbf{n})\) differs from the euclidean distance!) As a consequence, a simplex \(\sigma\) of \(\mathcal{A}\) 
is a totally ordered subset of \(\mathcal{A}(\mathds{Z})\). If \(\mathbf{m} \defeq \min \sigma\), \(\mathbf{n} \defeq \max \sigma\), then 
\(\sigma \subset [\mathbf{m}, \mathbf{n}] \subset [\mathbf{m}, \mathbf{m} + \mathbf{n}_{r-1}]\). \stepcounter{equation}%

For a root \(\boldsymbol{\alpha}\), we let \(s_{\boldsymbol{\alpha}}\) be the reflection at \(H_{\boldsymbol{\alpha}}\):
\begin{align}
	s_{\boldsymbol{\alpha}} \colon \mathcal{A}(\mathds{R})	&\longrightarrow \mathcal{A}(\mathds{R}) \\
																	\mathbf{x}							&\longmapsto \mathbf{x} - 2 \frac{(\boldsymbol{\alpha}, \mathbf{x})}{(\boldsymbol{\alpha}, \boldsymbol{\alpha})} \boldsymbol{\alpha}. \nonumber
\end{align}\stepcounter{subsubsection}%
The \(s_{\boldsymbol{\alpha}}\) generate the \textbf{Weyl group} \(W \cong S_{r}\), which permutes the coordinates, and \(s_{\boldsymbol{\alpha}_{i}}\) corresponds to the transposition \( (i,i+1)\). Due to our normalization \(x_{r} = 0\), this
may look a bit bumpy. For example,
\subsubsection{} \(s_{\boldsymbol{\alpha}_{r-1}}(x_{1}, \dots, x_{r-1}, x_{r} = 0) = \) class of \( (x_{1}, x_{2}, \dots, x_{r-2}, 0, x_{r-1}), \) which equals \( (x_{1} - x_{r-1}, \dots, x_{r-2} - x_{r-1}, - x_{r-1}, 0)\) in normalized form. \stepcounter{equation}%

Now \(W\) regarded as the group of permutation matrices sits inside \(\Gamma = \GL(r,A)\) or even inside \( \PGL(r,A)\), and \(\mathcal{W}\) is a fundamental domain for the action of \(W\) on \(\mathcal{A}\). For a simplex \(\sigma\) of
\(\mathcal{A}\), we let \(\langle \sigma \rangle\) be the affine subspace of \(\mathcal{A}(\mathds{R})\) generated by \(\sigma\). It is an intersection of finitely many hyperplanes parallel with root hyperplanes \(H_{\boldsymbol{\alpha}}\).
If \(\codim \sigma = 1\), we let
\begin{equation} \label{Eq.Reflection-at-<-sigma->}
	s_{\sigma} \colon \mathcal{A}(\mathds{R}) \longrightarrow \mathcal{A}(\mathds{R})
\end{equation}
by the reflection at \(\langle \sigma \rangle \), a simplicial map (that is, it preserves simplices). There exist unique \(\boldsymbol{\alpha} \in \Phi^{+}\) and \(n \in \mathds{Z}\) such that 
\(\langle \sigma \rangle = \{ \mathbf{x} \in \mathcal{A}(\mathds{R}) \mid \boldsymbol{\alpha}(\mathbf{x}) = n\}\). The set \( \{ s_{\sigma} \mid \sigma \text{ a simplex of codimension 1}\}\) generates what is called the 
\textbf{affine Weyl group} \(\tilde{W}\) of \(\Omega\), which may be described as the semi-direct product \(\tilde{W} = W \ltimes \mathcal{A}(\mathds{Z})\). Then \(s_{\sigma}\) is conjugate in \(\tilde{W}\) with \(s_{\boldsymbol{\alpha}}\).
Here we show the picture for \(r = 3\):

\begin{Picture} \label{Picture.Weyl-Chamber} ~
 	\begin{center}
 		\begin{tikzpicture}[scale=1.35]
 			% Triangle
 		  	\foreach \row in {0,1, ...,5} {
        			\draw ($\row*(0.5, {0.5*sqrt(3)})$) -- ($(\rows,0)+\row*(-0.5, {0.5*sqrt(3)})$);  %vertical lines
        			\draw ($\row*(1, 0)$) -- ($(\rows/2,{\rows/2*sqrt(3)})+\row*(0.5,{-0.5*sqrt(3)})$); %Lines with positive slope
        			\draw ($\row*(1, 0)$) -- ($(0,0)+\row*(0.5,{0.5*sqrt(3)})$); % Diagonal lines with negative slope
   		 	}
   		 	% Axes
   		 	\draw (-3,0) -- (6,0);
   		 	\draw (0,-2) -- (0,5);
   		 	% Joining lines
   			\draw (0,{sqrt(3)}) -- ($(1.5, {-0.5*sqrt(3)})$); 
   		 	\draw ($2.5*(1.5, {-0.5*sqrt(3)})$) -- ($-2*(1.5, {-0.5*sqrt(3)})$);
   		 	\draw ($-0.45*(2.5,{2.5*sqrt(3)})$) -- ($1.15*(2.5,{2.5*sqrt(3)})$);
   		 	% Vertices
   		 	\draw[->, thick] (0,0) -- ($(0.5,{0.5*sqrt(3)})$); %n2
   		 	\draw[->, thick] (0,0) -- (1,0); %n1
   		 	\draw[->, thick] (0,0) -- (0, {sqrt(3)}) node[left] {\(\boldsymbol{\alpha}_{1} = 2\mathbf{n}_{2}-\mathbf{n}_{1}\)}; %2n2-n1
   		 	\draw[->, thick] (0,0) -- ($(1.5, {-0.5*sqrt(3)})$) node[above right] {\(\boldsymbol{\alpha}_{2} = 2\mathbf{n}_{1} - \mathbf{n}_{2}\)}; %2n1-n2
   		 	% Handplaced nodes
   		 	\node (n1) at (1,-0.25) {\(\mathbf{n}_{1}\)};
   		 	\node (n2) at (0.5, 0.35) {\(\mathbf{n}_{2}\)};
   		 	\node (W1) at (3,4.5) {\(\mathcal{W}_{1}\)};
   		 	\node (W2) at (5,-0.25) {\(\mathcal{W}_{2}\)};
   		 	\node (W) at (2.5,1.5) {\(\mathcal{W}\)};
   		 	\node (A3) at (-1,4) {\(\mathcal{A} = \mathcal{A}^{3}\)};
   		 	\node (Ha1) at (-2,-0.25) {\(H_{\boldsymbol{\alpha}_{1}} = \mathds{R}\mathcal{W}_{2}\)};
   		 	\node (Ha2) at (-1.7,-1.5) {\(H_{\boldsymbol{\alpha}_{2}} = \mathds{R}\mathcal{W}_{1}\)};
   		\end{tikzpicture}
 	\end{center}
\end{Picture}

\subsection{} Having now the necessary background and notations, we start to investigate conditions (a), \dots, (e) for \(f = {}_{a}\ell_{k}\) and, as far as necessary, for \(f = \alpha_{k}\). Clearly, for all of them except for (d), we may restrict
to the full subcomplex \(\mathcal{W}\) of \(\mathcal{BT}\), as \(\mathcal{BT} = \Gamma \mathcal{W}\).

\subsection{} We first observe that \(\mathbf{x} = (x_{1}, \dots, x_{r-1}, 0) \in \mathcal{W}(\mathds{Q})\) belongs to the interior \(\overset{\circ}{\sigma}\) of some maximal simplex \(\sigma\) of \(\mathcal{W}\) (which has dimension 
\(r-1\)) if and only if the \(x_{i}\) (\( 1 \leq i \leq r\)) are pairwise incongruent modulo \(\mathds{Z}\). For such \(\mathbf{x}\), the equality
\begin{equation}
	x_{i} + s = x_{i'} + s' \qquad (1 \leq i,i' \leq r, 0 \leq s,s' < d)
\end{equation}
holds only in the trivial case \( (i,s) = (i',s')\). By Theorem \ref{Theorem.Membership-via-d-characteristic-sequences}, \(\mathbf{x}\) does not belong to \(\mathcal{W}({}_{a}\ell_{k}) = \mathcal{W}(d,k)\). Therefore:

\begin{Proposition}
	Criterion \(\mathrm{(a)}\) of \ref{Subsection.Simpliciality-of-modular-form} is fulfilled for \({}_{a}\ell_{k}\).
\end{Proposition}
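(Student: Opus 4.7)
The plan is to reduce to the Weyl chamber $\mathcal{W}$ by equivariance and then apply the combinatorial characterization of membership in $\mathcal{W}(d,k)$ provided by Theorem \ref{Theorem.Membership-via-d-characteristic-sequences}. Since $\mathcal{BT} = \Gamma\mathcal{W}$ and the image of an interior point of a maximal simplex under the action of $\Gamma$ is again an interior point of a maximal simplex, it suffices to verify that $\overset{\circ}{\sigma}(\mathds{Q}) \cap \mathcal{W}(d,k) = \varnothing$ for every $(r-1)$-simplex $\sigma$ of $\mathcal{W}$.

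Next I would make explicit the combinatorial description of interior points of maximal simplices of $\mathcal{W}$: a point $\mathbf{x} = (x_1, \ldots, x_{r-1}, 0) \in \mathcal{W}(\mathds{Q})$ lies in the interior of an $(r-1)$-simplex precisely when its barycentric coordinates with respect to the vertices of the simplex are all strictly positive; translated to the linear coordinates on the standard apartment this amounts to saying that the residues of $x_1, \ldots, x_r$ modulo $\mathds{Z}$ are pairwise distinct. This is the key geometric observation --- the fractional parts $\{x_i\}$ determine the unique maximal simplex whose interior contains $\mathbf{x}$, and two coordinates share a fractional part exactly when $\mathbf{x}$ sits on a wall cutting a maximal simplex into lower-dimensional faces.

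Given such an $\mathbf{x}$, I would then examine the multiset $\{x_i + s \mid 1 \leq i \leq r,\ 0 \leq s < d\}$ that defines the $d$-characteristic sequence of Definition \ref{Definition.d-characteristic-sequence}. An equality $x_i + s = x_{i'} + s'$ with $0 \leq s, s' < d$ forces $x_i \equiv x_{i'} \pmod{\mathds{Z}}$; by the previous paragraph this can only happen when $i = i'$, in which case $s = s'$ follows immediately. Thus the $rd$ values $v_1^{(d)}(\mathbf{x}), \ldots, v_{rd}^{(d)}(\mathbf{x})$ are pairwise distinct, and in particular $v_k^{(d)}(\mathbf{x}) \neq v_{k+1}^{(d)}(\mathbf{x})$ for every $1 \leq k < rd$. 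By Theorem \ref{Theorem.Membership-via-d-characteristic-sequences} this rules out $\mathbf{x} \in \mathcal{W}(d,k)$, proving (a) on $\mathcal{W}$ and hence on all of $\mathcal{BT}$.

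The only part with any genuine content is the geometric identification of interior points of maximal simplices with tuples having pairwise-distinct fractional parts; everything else is a direct application of the vanishing criterion. I expect no substantive obstacle --- once the characterization of $\overset{\circ}{\sigma}$ is in place, the strict monotonicity of the characteristic sequence is immediate.
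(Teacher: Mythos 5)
Your argument is correct and is essentially the paper's own proof: both reduce to $\mathcal{W}$ via the $\Gamma$-action, identify the interior points of maximal simplices as those $\mathbf{x}$ whose coordinates are pairwise incongruent modulo $\mathds{Z}$, observe that for such $\mathbf{x}$ an equality $x_i+s=x_{i'}+s'$ forces $(i,s)=(i',s')$ so the $d$-characteristic sequence is strictly increasing, and conclude via Theorem \ref{Theorem.Membership-via-d-characteristic-sequences}. No gaps.
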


\subsection{} \label{Subsection.Understanding-Conditions} In order to treat the other criteria (b), (c), (c strong), (d), (e), we do away with the rationality requirement 
\(\mathbf{x} \in \mathcal{W}(\mathds{Q}) \subset \mathcal{W}(\mathds{R})\) and regard the characteristic sequences \( (v_{k}(\mathbf{x}))_{n \in \mathds{N}}\) and \( (v_{k}^{(d)}(\mathbf{x}))_{1 \leq k \leq d}\) as defined 
on \(\mathcal{W}(\mathds{R})\) (along with the necessary generalizations of definitions, notations, and assertions). This is feasible since
the critical condition \(v_{k}^{(d)}(\mathbf{x}) = v_{k+1}^{(d)}(\mathbf{x})\) (and similarly for \(v_{k}\)) doesn't make use of the rationality of \(\mathbf{x}\).

\subsection{} Let \(\sigma = \{ \mathbf{m}^{(0)}, \dots, \mathbf{m}^{(\ell)}\}\) be a non-maximal simplex in \(\mathcal{W}\) of dimension \(\ell\), where \(1 \leq \ell < r-1\). Write
\[
	\mathbf{m}^{(j)} = (m_{1}^{(j)}, \dots, m_{r}^{(j)}),
\]
where all the \(m_{i}^{(j)} \in \mathds{N}_{0}\), \(m_{1}^{(j)} \geq \dots \geq m_{r-1}^{(j)} \geq m_{r}^{(j)} = 0\). For \(\boldsymbol{\beta} \in \Phi\), its restriction to \(\sigma(\mathds{R})\) satisfies 
\begin{equation} \label{Equation.Restriction-of-beta-to-sigma-IR}
	\boldsymbol{\beta}(\sigma(\mathds{R})) = \{ b \} \text{ or } [b,b+1]
\end{equation}
with some \(b \in \mathds{Z}\). Now consider the functions \(\mathbf{x} \mapsto x_{i} + s\) (resp. \(\mathbf{x} \mapsto x_{i'} + s'\)) on \(\sigma(\mathds{R})\), where \(1 \leq i,i' \leq r\), \(0 \leq s,s' < d \). If
\begin{equation} \label{Equation.xi+s+smaller-than-xiprime+sprime}
	x_{i} + s < x_{i'} + s'
\end{equation} 
for one \(\mathbf{x} \in \overset{\circ}{\sigma}(\mathds{R})\), then by \eqref{Equation.Restriction-of-beta-to-sigma-IR} this relation will persist all over \(\overset{\circ}{\sigma}(\mathds{R})\). Similarly, equality in 
\eqref{Equation.xi+s+smaller-than-xiprime+sprime} holds or doesn't hold for all \(\mathbf{x} \in \overset{\circ}{\sigma}(\mathds{R})\) simultaneously, but \eqref{Equation.xi+s+smaller-than-xiprime+sprime} may degenerate to equality
if \(\mathbf{x}\) approaches a boundary point \(\mathbf{y}\) of \(\sigma(\mathds{R})\). Letting \( (v_{k}^{(d)}(\mathbf{x}))_{1 \leq k \leq rd}\) be the \(d\)-characteristic sequence of \(\mathbf{x} \in \sigma(\mathds{R})\), it follows that
\begin{equation}
	\lim_{\mathbf{x} \to \mathbf{y}} v_{k}^{(d)}(\mathbf{x}) = v_{k}^{(d)}(\mathbf{y}).
\end{equation}
In particular, if \(\mathbf{x} \in \overset{\circ}{\sigma}(\mathds{R})\) belongs to \(\mathcal{W}(d,k) = \{ x \in \mathcal{W}(\mathds{R}) \mid v_{k}^{(d)}(\mathbf{x}) = v_{k+1}^{(d)}(\mathbf{x})\}\), then all the boundary points of 
\(\sigma(\mathds{R})\), and in particular the vertices \(\mathbf{m}^{(j)}\) belong to \(\mathcal{W}(d,k)\).

Conversely, assume that \(\mathbf{m}^{(0)}, \dots, \mathbf{m}^{(\ell)}\) belong to \(\mathcal{W}(d,k)\), and let \(\mathbf{x} \in \overset{\circ}{\sigma}(\mathds{R})\),
\begin{equation}
	\mathbf{x} = \sum_{0 \leq j \leq \ell} t_{j} \mathbf{m}^{(j)}
\end{equation}
with barycentric coordinates \(t_{j}\), \(0 \leq t_{j} \leq 1\), \(\sum t_{j} = 1\). Write \(v_{k}^{(d)}(\mathbf{x}) = x_{i} + s\), \(v_{k+1}^{(d)}(\mathbf{x})= x_{i'} + s'\) with suitable data \( (i,s) \) and \( (i', s') \). Then
\[
	x_{i'} - x_{i} = \sum_{0 \leq j \leq \ell} t_{j}( m_{i'}^{(j)} - m_{i}^{(j)} )
\]
and
\begin{equation} \label{Eq.Difference-of-respective-d-characteristic-sequences}
	v_{k+1}^{(d)}(\mathbf{x}) - v_{k}^{(d)}(\mathbf{x}) = \sum_{j} t_{j}(m_{i'}^{(j)} - m_{i}^{(j)}) + s' - s.
\end{equation}
Letting \(\mathbf{x}\) tend to \(\mathbf{m}^{(j)}\), we find for each \(j\):
\[
	0 = v_{k+1}^{(d)}(\mathbf{m}^{(j)}) - v_{k}^{(d)}(\mathbf{m}^{(j)}) = m_{i'}^{(j)} - m_{i}^{(j)} + s' - s,
\]
that is, \(m_{i'}^{(j)} - m_{i}^{(j)} = s -s'\). Inserting in \eqref{Eq.Difference-of-respective-d-characteristic-sequences} yields
\begin{equation}
	v_{k+1}^{(d)}(\mathbf{x}) - v_{k}^{(d)}(\mathbf{x}) = \Big( \sum t_{j} \Big) (s-s') + s' -s = 0.
\end{equation}
Therefore, \(\mathbf{x} \in \mathcal{W}(d,k)\). As this holds for each \(\mathbf{x} \in \overset{\circ}{\sigma}(\mathds{R})\), we have verified criterion (b) of \ref{Subsection.Simpliciality-of-modular-form} for \(f = {}_{a}\ell_{k}\), i.e.,

\begin{Proposition} \label{Proposition.Characterization-of-Wdk-as-set-of-real-points}
	\(\mathcal{W}(d,k)\) (resp. \(\mathcal{A}(d,k)\), resp. \(\mathcal{BT}(d,k)\)) is the set of real points (compare \ref{Subsection.Understanding-Conditions}) of a full subcomplex of \(\mathcal{W}\) (resp. \(\mathcal{A}\), resp. \(\mathcal{BT}\)), which will be denoted 
	by the same symbol \(\mathcal{W}(d,k)\) (resp. \(\mathcal{A}(d,k)\), resp. \(\mathcal{BT}(d,k)\)).
\end{Proposition}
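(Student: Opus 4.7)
The plan is to observe that the analytical core of the statement has already been carried out in subsection~\ref{Subsection.Understanding-Conditions}; what remains is to package it as the definition and verification of a full subcomplex. I set $V \defeq \mathcal{W}(d,k) \cap \mathcal{W}(\mathds{Z})$ and let $\mathcal{S}$ be the full subcomplex of $\mathcal{W}$ with vertex set $V$; the task is then to prove $\mathcal{S}(\mathds{R}) = \mathcal{W}(d,k)$.

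The pivotal input is the affine observation \eqref{Equation.Restriction-of-beta-to-sigma-IR}: for any simplex $\sigma$ of $\mathcal{W}$ and any root $\boldsymbol{\alpha}_{i,i'}\in\Phi$, the restriction $\boldsymbol{\alpha}_{i,i'}|_{\sigma(\mathds{R})}$ takes values in an integer interval of length at most $1$. Consequently the strict ordering of the family $\{x_i+s \mid 1\le i\le r,\ 0\le s<d\}$ is constant on $\overset{\circ}{\sigma}(\mathds{R})$, and the map $\mathbf{x}\mapsto v_k^{(d)}(\mathbf{x})$ extends continuously to $\sigma(\mathds{R})$.

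For the inclusion $\mathcal{W}(d,k)\subset\mathcal{S}(\mathds{R})$, any $\mathbf{x}\in\mathcal{W}(d,k)$ lies in $\overset{\circ}{\sigma}$ for some simplex $\sigma$ of $\mathcal{W}$ of dimension $\leq r-2$ (since $\mathcal{W}(d,k)$ avoids the interiors of maximal simplices by criterion~(a), proved just above). Continuity of $v_k^{(d)}$ and the integrality of its vertex values, combined with identity \eqref{Eq.Difference-of-respective-d-characteristic-sequences}, force $v_k^{(d)}(\mathbf{m}^{(j)}) = v_{k+1}^{(d)}(\mathbf{m}^{(j)})$ at every vertex $\mathbf{m}^{(j)}$ of $\sigma$, placing all of them in $V$ and so $\mathbf{x}\in\mathcal{S}(\mathds{R})$. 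For the reverse inclusion, writing any interior point of a simplex with vertices in $V$ as a barycentric combination $\mathbf{x}=\sum t_j\mathbf{m}^{(j)}$ and substituting into the chain leading to \eqref{Eq.Difference-of-respective-d-characteristic-sequences} yields $v_{k+1}^{(d)}(\mathbf{x})-v_k^{(d)}(\mathbf{x})=0$, exactly as displayed there.

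The $\mathcal{A}(d,k)$ and $\mathcal{BT}(d,k)$ variants follow by transport of structure: $W$ acts simplicially on $\mathcal{A}$ with $\mathcal{W}$ as a strict fundamental domain, and the relation $v_k^{(d)}=v_{k+1}^{(d)}$ is $W$-invariant (both sides shift by the same constant under renormalization $x_r=0$ after permutation), so $\mathcal{A}(d,k) = W\cdot\mathcal{W}(d,k)$ inherits the full-subcomplex property from $\mathcal{W}(d,k)$; the $\Gamma$-invariance of $\Omega({}_a\ell_k)$, reflecting the modularity of ${}_a\ell_k$, gives the same conclusion for $\mathcal{BT}(d,k) = \Gamma\cdot\mathcal{W}(d,k)$. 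The only conceptual ingredient is the affine lemma \eqref{Equation.Restriction-of-beta-to-sigma-IR} controlling root variation across a single simplex of $\mathcal{W}$; once it is in hand, the proposition is a bookkeeping exercise assembling the two halves of the computation already performed in subsection~\ref{Subsection.Understanding-Conditions}, and I foresee no serious obstacle beyond the verification that passage from $\mathcal{W}$ to $\mathcal{A}$ and $\mathcal{BT}$ preserves fullness (which reduces, via the strict fundamental-domain property, to the observation that $V_A\cap\mathcal{W}(\mathds{Z})=V$).
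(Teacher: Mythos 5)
Your proposal is correct and follows essentially the same route as the paper: it rests on the affine observation \eqref{Equation.Restriction-of-beta-to-sigma-IR} controlling the variation of the functions \(x_i+s\) over a single simplex, derives the continuity of \(v_k^{(d)}\) up to the boundary for the inclusion of \(\mathcal{W}(d,k)\) into the full subcomplex, and uses the barycentric computation culminating in \eqref{Eq.Difference-of-respective-d-characteristic-sequences} for the reverse inclusion, exactly as in \ref{Subsection.Understanding-Conditions} and the surrounding text. Your explicit transport-of-structure remark for \(\mathcal{A}(d,k)\) and \(\mathcal{BT}(d,k)\) is a minor elaboration of what the paper leaves implicit via \(\mathcal{BT}(f)=\Gamma\mathcal{W}(f)\).
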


\subsection{} As we just saw, both the point set and the simplicial complex \(\mathcal{W}(d,k)\) are fully determined by its vertex set
\[
	\mathcal{W}(k,d)(\mathds{Z}) = \{ \mathbf{n} \in \mathcal{W}(\mathds{Z}) \mid v_{k}^{(d)}(\mathbf{n}) = v_{k+1}^{(d)}(\mathbf{n}) \}.
\]
We may therefore restrict our attention to that set and its simplicial structure.

\section{Diagrams of vertices} \label{Section.Diagrams-of-vertices}

In the following, \(\mathbf{m}\), \(\mathbf{n}\), \dots denote vertices of \(\mathcal{A}\) or \(\mathcal{W}\) in the format \(\mathbf{m} = (m_{1}, m_{2}, \dots, m_{r}=0)\) with the coordinates \(m_{i} \in \mathds{Z}\). 
In order to discuss the condition for \(\mathbf{n} \in \mathcal{W}\) to belong to \(\mathcal{W}(k)\) or \(\mathcal{W}(d,k)\), we define the following structures:

\begin{Definition}
	Let \(\mathbf{n} = (n_{1}, n_{2}, \dots, n_{r} = 0)\) be a vertex of \(\mathcal{W}\). 
	\begin{enumerate}[label=(\roman*)]
		\item The \textbf{diagram} \(\diag(\mathbf{n})\) of \(\mathbf{n}\) is the subset of pairs \( (i,v) \) in \( \{1,2,\dots,r\} \times \mathds{Z} \) that satisfy \(n_{i} \leq v\). Elements of \(\diag(\mathbf{n})\) are called
		\textbf{boxes}. The entries of the box \(B = (i,v)\) are called the \textbf{index} \(i = i(B)\) and the \textbf{value} \(v = V(B)\) of \(B\). We impose the following total order on the set of boxes: 
		\begin{equation} \label{Eq.Order-of-boxes}
			B = (i,v) \leq B' = (i', v') \vcentcolon \Longleftrightarrow v < v' \text{ or } (v = v' \text{ and } i \geq i')
		\end{equation}
		and number them accordingly: \(B_{1} = (r,0) < B_{2} < \dots\), or \(B_{1}(\mathbf{n}) < B_{2}(\mathbf{n}) < \dots \) to indicate the dependence on \(\mathbf{n}\). We may visualize \(\diag(\mathbf{n})\) as the
		\( (r \times \mathbf{N}_{0}) \)-matrix with first \(n_{i}\) entries in the \(i\)-th row empty, and the others containing the number of the corresponding box (see Example \ref{Example.Some-vertex-diagrams}(i)). By construction
		\begin{equation} \label{Eq.d-characteristic-sequence-of-box}
			v(B_{k}(\mathbf{n})) = v_{k}(\mathbf{n})
		\end{equation}
		with the characteristic sequence \( v_{k}(\mathbf{n}) \) of \(\mathbf{n}\), and so
		\begin{equation}
			\mathbf{n} \in W(k) \Longleftrightarrow B_{k}(\mathbf{n}) \text{ and } B_{k+1}(\mathbf{n}) \text{ belong to the same column of } \diag(\mathbf{n}).
		\end{equation}
		In case \(\mathbf{n} \in \mathcal{W}(k)\), we define the \textbf{critical index}
		\begin{equation}
			\rho_{k}(\mathbf{n}) \defeq i(B_{k+1}(\mathbf{n})),
		\end{equation}
		so that \(1 \leq \rho_{k}(\mathbf{n}) < r\).
		\item Fix additionally \(d \in \mathds{N}\). The \(d\)-\textbf{diagram} \(\diag^{(d)}(\mathbf{n})\) is the subset of those \( (i,v)\) in \(\{1,2,\dots, r\} \times \mathds{Z}\) that satisfy
		\begin{equation}
			n_{i} \leq v < n_{i} + d.
		\end{equation}
		Elements are called \(d\)-\textbf{boxes}. They are ordered by the same rule \eqref{Eq.Order-of-boxes} and arranged
		\[
			B_{1}^{(d)}(\mathbf{n}) = (r,0) < B_{2}^{(d)}(\mathbf{n}) < \dots < B_{rd}^{(d)}(\mathbf{n}) = (1,n_{1} + d-1).
		\]
		Like \(\diag(\mathbf{n})\), we visualize \(\diag^{(d)}(\mathbf{n})\) as an \( (r \times (n_{1} +d-1))\)-matrix with some empty entries and the others containing the \(d\)-box number 
		(see Example \ref{Example.Some-vertex-diagrams}(ii)). As before
		\begin{equation}
			v(B_{k}^{(d)}(\mathbf{n})) = v_{k}^{(d)}(\mathbf{n})
		\end{equation}
		and
		\begin{equation}
			n \in \mathcal{W}(d,k) \Longleftrightarrow B_{k}^{(d)}(\mathbf{n}) \text{ and } B_{k+1}^{(d)}(\mathbf{n}) \text{ belong to the same column of } \diag^{(d)}(\mathbf{n}).
		\end{equation}
		For \(\mathbf{n} \in \mathcal{W}(d,k)\) we define the \(d\)-\textbf{critical index}
		\begin{equation}
			\rho_{k}^{(d)}(\mathbf{n}) \defeq i(B_{k+1}^{(d)}(\mathbf{n})),
		\end{equation}
		which satisfies \(1 \leq \rho_{k}^{(d)}(\mathbf{n}) < r\).
	\end{enumerate}
\end{Definition}

\begin{Example} \label{Example.Some-vertex-diagrams}
	We let \(r=4\) and \(\mathbf{n} = (4,3,1,0)\).
	\begin{enumerate}
		\item \(d = \infty\) (i.e., no finite \(d\) specified)
		\begin{center}
			\begin{tikzpicture}[scale=0.75]
				\foreach \row in {0,1, ...,4} {
        				\draw (0,\row) -- (7.5,\row);  %horizontal lines
   		 		}
   		 		\foreach \column in {0,1,...,6} {
   		 			\draw(\column,0) -- (\column,4); %vertical lines
   		 		}
	   		 	\draw (2,2) -- (4,4);
				\draw (0,1) -- (3,4);
				\draw (0,2) -- (2,4);
				\draw (0,3) -- (1,4);
				% Nodes
				% Rows
				\node (r1) at (-0.5,3.5) {1};
				\node (r2) at (-0.5, 2.5) {2};
				\node (r3) at (-0.5, 1.5) {3};
				\node (r4) at (-0.5, 0.5) {4};
				\node (iv) at (-0.5,-0.5) {\(i/v\)};
				% Columns
				\node (c0) at (0.5,-0.5) {0};
				\node (c1) at (1.5,-0.5) {1};
				\node (c2) at (2.5,-0.5) {2};
				\node (c3) at (3.5, -0.5) {3};
				\node (c4) at (4.5,-0.5) {4};
				\node (c5) at (5.5,-0.5) {5};
				% Entries
				\node (1) at (0.5,0.5) {1};
				\node (2) at (1.5,0.5) {2};
				\node (3) at (1.5,1.5) {3};
				\node (4) at (2.5,0.5) {4};
				\node (5) at (2.5,1.5) {5};
				\node (6) at (3.5,0.5) {6};
				\node (7) at (3.5,1.5) {7};
				\node (8) at (3.5,2.5) {8};
				\node (9) at (4.5,0.5) {9};
				\node (10) at (4.5,1.5) {10};
				\node (11) at (4.5,2.5) {11};
				\node (12) at (4.5,3.5) {12};
				\node (13) at (5.5,0.5) {13};
				\node (14) at (5.5,1.5) {14};
				\node (15) at (5.5,2.5) {15};
				\node (16) at (5.5,3.5) {16};
				\node (d0) at (6.75, -0.5) {\(\cdots\)};
				\node (d1) at (6.75, 0.5) {\(\cdots\)};
				\node (d2) at (6.75, 1.5) {\(\cdots\)};
				\node (d3) at (6.75, 2.5) {\(\cdots\)};
				\node (d4) at (6.75, 3.5) {\(\cdots\)};
				\node (label) at (-2,2) {\(\diag(\mathbf{n})\)};
			\end{tikzpicture}
		\end{center}
		Then, e.g., \(\mathbf{n} \in \mathcal{W}(7)\) and \(\rho_{7}(\mathbf{n}) = 2\).
		\item \(d = 3\)
		\begin{center}
			\begin{tikzpicture}[scale=0.75]
				\foreach \row in {0,1, ...,4} {
        				\draw (0,\row) -- (7,\row);  %horizontal lines
   		 		}
   		 		\foreach \column in {0,1,...,7} {
   		 			\draw(\column,0) -- (\column,4); %vertical lines
   		 		}
   		 		% Lines
   		 		\draw (0,3) -- (1,4);
   		 		\draw (0,2) -- (2,4);
   		 		\draw (0,1) -- (3,4);
   		 		\draw (2,2) -- (4,4);
   		 		\draw (3,0) -- (5,2);
   		 		\draw (4,0) -- (7,3);
   		 		\draw (5,0) -- (7,2);
   		 		\draw (6,0) -- (7,1);
   		 		% Nodes
   		 		\node (label) at (-2,2) {\(\diag^{(3)}(\mathbf{n})\)};
   		 		% Rows
   		 		\node (r1) at (-0.5,3.5) {1};
				\node (r2) at (-0.5, 2.5) {2};
				\node (r3) at (-0.5, 1.5) {3};
				\node (r4) at (-0.5, 0.5) {4};
				% Columns
				\node (c0) at (0.5,-0.5) {0};
				\node (c1) at (1.5,-0.5) {1};
				\node (c2) at (2.5,-0.5) {2};
				\node (c3) at (3.5, -0.5) {3};
				\node (c4) at (4.5,-0.5) {4};
				\node (c5) at (5.5,-0.5) {5};
				\node (c6) at (6.5,-0.5) {6};
				% Entries
				\node (1) at (0.5,0.5) {1};
				\node (2) at (1.5,0.5) {2};
				\node (3) at (1.5,1.5) {3};
				\node (4) at (2.5,0.5) {4};
				\node (5) at (2.5,1.5) {5};
				\node (6) at (3.5,1.5) {6};
				\node (7) at (3.5,2.5) {7};
				\node (8) at (4.5,2.5) {8};
				\node (9) at (4.5,3.5) {9};
				\node (10) at (5.5,2.5) {10};
				\node (11) at (5.5,3.5) {11};
				\node (12) at (6.5,3.5) {12};
			\end{tikzpicture}
		\end{center}
		Here, for example, \(\mathbf{n} \in \mathcal{W}(3,6)\) with \(\rho_{6}^{(3)}(\mathbf{n}) = 2\), and \(\mathbf{n} \in \mathcal{W}(3,8) \) with \(\rho_{8}^{(3)}(\mathbf{n}) = 1\).
	\end{enumerate}
\end{Example}

\subsection{} As an exercise we determine which of the \(\mathbf{n}_{j}\) of \eqref{Eq.Fundamental-dominant-weights} belong to \(\mathcal{W}(d,k)\). We have
\begin{equation}
	\diag^{(d)}(\mathbf{n}_{0}) = \diag^{(d)}(\mathbf{0})
\end{equation}\stepcounter{subsubsection}%
\begin{center}
	\begin{tikzpicture}[scale=0.75, every node/.style={scale=0.8}]
		% Horizontal lines
		\draw (0,0) -- (9,0);
		\draw (0,1) -- (9,1);
		\draw (0,2) -- (9,2);
		\draw (0,5) -- (9,5);
		\draw (0,6) -- (9,6);
		\draw (0,7) -- (9,7); 
		% Vertical lines
		\draw (0,0) -- (0,7);
		\draw (1.25,0) -- (1.25,7);
		\draw (2.5,0) -- (2.5,7);
		\draw (7,0) -- (7,7);
		\draw (9,0) -- (9,7);
		% Nodes
		% Row labels
		\node (r1) at (-0.5,6.5) {1};
		\node (r2) at (-0.5,5.5) {2}; 
		\node (rr-1) at (-0.75,1.5) {\(r-1\)};
		\node (rr) at (-0.5,0.5) {\(r\)};
		\node (l) at (-0.5,-0.5) {\(i/v\)};
		% Column labels
		\node (c0) at (0.625,-0.5) {0};
		\node (c1) at (1.875,-0.5) {1};
		\node (cd-1) at (8,-0.5) {\(d-1\)};
		% Entries
		\node (r) at (0.625,6.5) {\(r\)};
		\node (r-1) at (0.625, 5.5) {\(r-1\)};
		\node (2) at (0.625,1.5) {2};
		\node (1) at (0.625,0.5) {1};
		\node (2r) at (1.875,6.5) {\(2r\)};
		\node (2r-1) at (1.875,5.5) {\(2r-1\)};
		\node (r+2) at (1.875,1.5) {\(r+2\)};
		\node (r+1) at (1.875,0.5) {\(r+1\)};
		\node (dr) at (8,6.5) {\(dr\)};
		\node (dr-1) at (8,5.5) {\(dr-1\)};
		\node (d-1r+1) at (8,0.5) {\((d-1)r+1\)};
	\end{tikzpicture}
\end{center}
and for \(1 \leq j < r\)
\begin{equation}
	\diag^{(d)}(\mathbf{n}_{j})
\end{equation}\stepcounter{subsubsection}%
\begin{center}
	\begin{tikzpicture}[scale=0.75, every node/.style={scale=0.9}]
		% Horizontal lines
		\draw (0,0) -- (15.5,0);
		\draw (0,1) -- (13.5,1);
		\draw (0,4) -- (13.5,4);
		\draw (0,5) -- (15.5,5);
		\draw (1.5,6) -- (15.5,6);
		\draw (1.5,8) -- (15.5,8);
		\draw (0,9) -- (15.5,9);
		% Vertical lines
		\draw (0,0) -- (0,9);
		\draw (1.5,0) -- (1.5,9);
		\draw (3.5,0) -- (3.5,9); 
		\draw (10.5,0) -- (10.5,9);
		\draw (13.5,0) -- (13.5,9);
		\draw (15.5,0) -- (15.5,9);
		% Diagonal lines
		\draw (0,5) -- (1.5,9);
		\draw (13.5,0) -- (15.5,5);
		% Nodes
		% Row labels
		\node (r1) at (-0.5,8.5) {1};
		\node (rj) at (-0.5,5.5) {\(j\)};
		\node (rj+1) at (-0.75, 4.5) {\(j+1\)};
		\node (rr) at (-0.5,0.5) {\(r\)};
		\node (tl) at (-0.5,-0.5) {\(i/v\)};
		% Column labels
		\node (c0) at (0.75,-0.5) {0};
		\node (c1) at (2.5,-0.5) {1};
		\node (cd-1) at (12,-0.5) {\(d-1\)};
		\node (d) at (14.5,-0.5) {\(d\)};
		% Entries
		\node (r-j) at (0.75, 4.5) {\(r-j\)};
		\node (d1) at (0.75, 2.5) {\(\cdots\)};
		\node (1) at (0.75,0.5) {1};
		\node (2r-j) at (2.5,8.5) {\(2r-j\)};
		\node (2r-2j) at (2.5,4.5) {\(2r-2j\)};
		\node (r-j+1) at (2.5,0.5) {\(r-j+1\)};
		\node (d2) at (7, 0.5) {\(\cdots\)};
		\node (d3) at (7, 2.5) {\(\cdots\)};
		\node (d4) at (7, 4.5) {\(\cdots\)};
		\node (d5) at (7, 5.5) {\(\cdots\)};
		\node (d6) at (7, 7) {\(\cdots\)};
		\node (d7) at (7,8.5) {\(\cdots\)}; 
		\node (dr-j) at (12,8.5) {\(dr-j\)};
		\node (dr-2j) at (12,4.5) {\(dr-2j\)};
		\node (dr-r-j+1) at (12,0.5) {\((d-1)r-j+1\)};
		\node (dr) at (14.5,8.5) {\(dr\)};
		\node (dr-j+1) at (14.5,5.5) {\(dr-j+1\)};
	\end{tikzpicture}
\end{center}
Hence:
\subsubsection{} \label{Subsubsection.Membership-of-Wdk} \(\mathbf{n}_{j}\) does not belong to \(\mathcal{W}(d,k)\) if and only if \(k+j \equiv 0 \pmod{r}\). That is, if \(\sigma = \{ \mathbf{n}_{i} \mid 0 \leq i < r \}\) 
denotes the standard \( (r-1) \)-simplex in \(\mathcal{W}\) with \(j\)-th face \(\sigma^{(j)} \defeq \sigma \smallsetminus \{ \mathbf{n}_{j} \}\), then \stepcounter{equation}%
\begin{equation} \label{Eq.Characterization-of-simplices-in-Wdk}
	\sigma^{(j)} \text{ is a simplex in } \mathcal{W}(d,k) \Longleftrightarrow k+j \equiv 0 \pmod{r}.
\end{equation}
Note that \ref{Subsubsection.Membership-of-Wdk} and \eqref{Eq.Characterization-of-simplices-in-Wdk} agree with the corresponding results for \(\mathcal{W}(k)\), i.e., for \(\alpha_{k}\) ([V] Proposition 2.11).

\begin{Lemma} \label{Lemma.d-characteristic-sequences-on-WIZ}
	\begin{enumerate}[label=\(\mathrm{(\roman*)}\)]
		\item As functions on the ordered set \(\mathcal{W}(\mathds{Z})\) of vertices of \(\mathcal{W}\), \(v_{k}\) (\(k \in \mathds{N}\)) and \(v_{k}^{(d)} (1 \leq k \leq rd)\) are montonically increasing.
		\item \(v_{k}\) and \(v_{k}^{(d)}\) grow at most by 1 on each simplex \(\sigma \subset \mathcal{W}\).
	\end{enumerate}
\end{Lemma}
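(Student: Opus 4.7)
The plan is to derive both parts from one elementary order-statistic lemma: if $(a_j)_{j \in J}$ and $(b_j)_{j \in J}$ are two multisets indexed by the same set and $a_j \leq b_j$ for every $j$, then after rearranging both weakly increasingly as $a^{(1)} \leq a^{(2)} \leq \cdots$ and $b^{(1)} \leq b^{(2)} \leq \cdots$, one has $a^{(k)} \leq b^{(k)}$ for every $k$. (Indeed, $\{j \in J \mid a_j \leq t\}$ contains $\{j \in J \mid b_j \leq t\}$ for every $t \in \mathds{R}$, and $a^{(k)}$, $b^{(k)}$ are by definition the smallest $t$ for which the respective counts reach $k$.) The same statement and proof apply to the countable multisets underlying $v_k$, since for each fixed $k$ only finitely many entries can lie below $v_k$.

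For part (i), suppose $\mathbf{m} \leq \mathbf{n}$ in $\mathcal{W}(\mathds{Z})$, so that $m_i \leq n_i$ for all $i$. Then $m_i + s \leq n_i + s$ pointwise on the index set $\{(i,s) \mid 1 \leq i \leq r,\ 0 \leq s < d\}$ (respectively $s \in \mathds{N}_0$), and the sorting lemma immediately yields $v_k^{(d)}(\mathbf{m}) \leq v_k^{(d)}(\mathbf{n})$ and $v_k(\mathbf{m}) \leq v_k(\mathbf{n})$.

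For part (ii), let $\sigma \subset \mathcal{W}$ be a simplex, and set $\mathbf{m} \defeq \min \sigma$ and $\mathbf{n} \defeq \max \sigma$ in the product order; by \ref{Subsubsection.Vertices-are-neighbors}, $\sigma$ is totally ordered and $\sigma \subset [\mathbf{m}, \mathbf{m} + \mathbf{n}_{r-1}]$, so $n_i - m_i \in \{0,1\}$ for each $i$. Part (i) already shows that $v_k^{(d)}$ (and $v_k$) is weakly increasing along the vertices of $\sigma$, so it remains only to prove $v_k^{(d)}(\mathbf{n}) \leq v_k^{(d)}(\mathbf{m}) + 1$. For this, apply the sorting lemma to the pointwise bound $n_i + s \leq m_i + s + 1$: the sorted rearrangement of the majorant multiset $\{m_i + s + 1\}$ is just the sorted rearrangement of $\{m_i + s\}$ shifted uniformly upward by $1$, so its $k$-th entry equals $v_k^{(d)}(\mathbf{m}) + 1$, yielding the required inequality. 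The same argument applies verbatim to $v_k$. No substantial obstacle is anticipated; the entire proof is an elementary manipulation of order statistics, the only structural input being the product-order description of simplices of $\mathcal{W}$ recalled in \ref{Subsubsection.Vertices-are-neighbors}.
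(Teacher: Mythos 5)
Your proof is correct and follows essentially the same route as the paper: both rest on the fact that the $k$-th order statistic of the multiset $\{x_i+s\}$ is monotone under pointwise domination, together with the bound $\max\sigma\leq\min\sigma+\mathbf{n}_{r-1}$ from \ref{Subsubsection.Vertices-are-neighbors}. You merely make explicit, via the sorting lemma, what the paper dispatches as ``obvious from a view to $\diag(\mathbf{n})$'' after reducing to the unit steps $\mathbf{n}\leadsto\mathbf{n}+\mathbf{e}_i$ and $\mathbf{n}\leadsto\mathbf{n}+\mathbf{n}_{r-1}$.
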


\begin{proof}
	\begin{enumerate}[wide, label=(\roman*)]
		\item For \(v_{k}\), it suffices to show that \(v_{k}(\mathbf{n}) \leq v_{k}(\mathbf{n}')\) whenever \(\mathbf{n}\) and \(\mathbf{n}' \defeq \mathbf{n} + \mathbf{e}_{i}\) belong to \(\mathcal{W}\). This is obvious from \eqref{Eq.d-characteristic-sequence-of-box} and a view
		to \(\diag(\mathbf{n})\). The argument for \(v_{k}^{(d)}\) is identical.
		\item In view of (i) and \ref{Subsubsection.Vertices-are-neighbors}, if suffcies to show that for each \(\mathbf{n} \in \mathcal{W}\), \(v_{k}(\mathbf{n} + \mathbf{n}_{r+1}) \leq v_{k}(\mathbf{n}) +1\), and similarly for \(v_{k}^{(d)}\). Again, this is seen on \(\diag(\mathbf{n})\)
		and \(\diag^{(d)}(\mathbf{n})\), respectively, as \(\mathbf{n} \leadsto \mathbf{n} + \mathbf{n}_{r-1}\) shifts the first \(r-1\) rows of diagrams one unit to the right and slightly changes the numbering.
	\end{enumerate}
\end{proof}

Our next task is to describe how the properties \(\mathbf{n} \in \mathcal{W}(k)\) or \(\mathbf{n} \in \mathcal{W}(d,k)\) behave under changing \(\mathbf{n} \in \mathcal{W}\).

\begin{Proposition} \label{Proposition.Sums-of-vertices}
	Let \(\mathbf{n}\) be a vertex of \(\mathcal{W}\) and \(\mathbf{n}' \defeq \mathbf{n} + \mathbf{n}_{i}\) with some \(1 \leq i < r \).
	\begin{enumerate}[label=\(\mathrm{(\roman*)}\)]
		\item If \(\mathbf{n} \in \mathcal{W}(k)\), then \( \mathbf{n}' \in \mathcal{W}(k) \Leftrightarrow \rho_{k}(\mathbf{n}) \neq i\). In this case
		\[
			v_{k}(\mathbf{n}') = v_{k+1}(\mathbf{n}') = \begin{cases} v_{k}(\mathbf{n}),	&\text{if } i < \rho_{k}(\mathbf{n}), \\ v_{k}(\mathbf{n}) + 1,	&\text{if } i > \rho_{k}(\mathbf{n}). \end{cases}
		\]
		\item Suppose that \(\mathbf{n} \in \mathcal{W}(d,k)\). Then \(\mathbf{n}' \in \mathcal{W}(d,k) \Leftrightarrow \rho_{k}^{(d)}(\mathbf{n}) \neq i\), and in this case
		\[
			v_{k}^{(d)}(\mathbf{n}') = v_{k+1}^{(d)}(\mathbf{n}') = \begin{cases} v_{k}^{(d)}(\mathbf{n}),	&\text{if } i < \rho_{k}^{(d)}(\mathbf{n}), \\ v_{k}^{(d)}(\mathbf{n}) + 1,	&\text{if } i > \rho_{k}^{(d)}(\mathbf{n}). \end{cases}
		\]
	\end{enumerate}
\end{Proposition}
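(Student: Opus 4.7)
The plan is to work at the level of the diagrams and their column-counting functions. For part (i), introduce $F_\mathbf{n}(v) \defeq \#\{B \in \diag(\mathbf{n}) : V(B) < v\} = \sum_j \max(0, v - n_j)$. The starting observation is that $\diag(\mathbf{n}')$ is obtained from $\diag(\mathbf{n})$ by deleting exactly the $i$ row-opener boxes $R_j \defeq (j, n_j)$ for $j = 1, \ldots, i$; accordingly
\[
F_\mathbf{n}(v) - F_{\mathbf{n}'}(v) = \#\{j \leq i : n_j < v\}.
\]
The condition $\mathbf{n} \in \mathcal{W}(k)$ with $v = v_k(\mathbf{n})$ is equivalent to $F_\mathbf{n}(v) \leq k-1$ and $k+1 \leq F_\mathbf{n}(v+1)$, and the analogous statement holds for $\mathbf{n}'$; so the whole question reduces to comparing $F_\mathbf{n}$ with $F_{\mathbf{n}'}$ at $v$ and $v+1$.

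Write $\rho \defeq \rho_k(\mathbf{n})$, $v \defeq v_k(\mathbf{n})$, and $i^*(v) \defeq \min\{j : n_j \leq v\}$ (so $i^*(v) \leq \rho$). The position of $B_{k+1}(\mathbf{n}) = (\rho, v)$ within column $v$ of $\diag(\mathbf{n})$ gives the identity $F_\mathbf{n}(v+1) - k = \rho - i^*(v) + 1$. Setting $N \defeq \#\{j \leq i : n_j \leq v\}$, one then has $F_{\mathbf{n}'}(v+1) = k + (\rho - i^*(v) + 1) - N$, and the three cases unfold as follows. If $i < \rho$, then $\{j \leq i : n_j \leq v\} \subseteq \{i^*(v), \ldots, \rho - 1\}$, whence $N \leq \rho - i^*(v)$ and $F_{\mathbf{n}'}(v+1) \geq k+1$; combined with the automatic bound $F_{\mathbf{n}'}(v) \leq F_\mathbf{n}(v) \leq k-1$ this gives $\mathbf{n}' \in \mathcal{W}(k)$ with $v_k(\mathbf{n}') = v$. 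If $i > \rho$, then $\{i^*(v), \ldots, \rho+1\} \subseteq \{j \leq i : n_j \leq v\}$, whence $N \geq \rho - i^*(v) + 2$ and $F_{\mathbf{n}'}(v+1) \leq k-1$; a parallel computation at $v+2$ yields $F_{\mathbf{n}'}(v+2) \geq k+1$, so $\mathbf{n}' \in \mathcal{W}(k)$ with $v_k(\mathbf{n}') = v+1$. If $i = \rho$, the equality $N = \rho - i^*(v) + 1$ forces $F_{\mathbf{n}'}(v+1) = k$ exactly, placing $B_k(\mathbf{n}')$ at the final position of column $v$ of $\diag(\mathbf{n}')$ and pushing $B_{k+1}(\mathbf{n}')$ into a strictly larger column, so $\mathbf{n}' \notin \mathcal{W}(k)$.

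Part (ii) is proved by applying the same strategy to $F_\mathbf{n}^{(d)}(v) \defeq \sum_j \min(d, \max(0, v - n_j))$, using the modified identity $F_\mathbf{n}^{(d)}(v) - F_{\mathbf{n}'}^{(d)}(v) = \#\{j \leq i : n_j < v \leq n_j + d\}$ and replacing $i^*(v)$ by the smallest index contributing to column $v$ of $\diag^{(d)}(\mathbf{n})$. The main technical point, and the only place where the argument must really be reworked, is to verify that each index $j$ in the chain $\{i^*(v), \ldots, \rho+1\}$ used in the three containments still contributes to the $d$-truncated count, i.e.\ that it also satisfies $n_j + d > v$. This is automatic because $(\rho, v) \in \diag^{(d)}(\mathbf{n})$ forces $n_\rho + d > v$, and $n_j \geq n_\rho$ for $j \leq \rho$; once this is checked, the three cases close word-for-word as in part (i).
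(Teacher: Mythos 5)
Your proof is correct, and it reorganizes the paper's argument rather than merely reproducing it. The paper identifies individual boxes across the move \(\mathbf{n}\leadsto\mathbf{n}'\): it introduces the set \(H\) of rows \(j\le i\) whose opener box drops below the critical column, proves renumbering identities such as \(B_{k-h}(\mathbf{n}')=B_k(\mathbf{n})\), and then locates the new \(k\)-th and \((k+1)\)-th boxes; for part (ii) it only indicates the modified sets \(H^{(d)}\) and declares the argument analogous. You instead package everything into the cumulative count \(F_{\mathbf{n}}(v)\) and the equivalence \enquote{\(\mathbf{n}\in\mathcal{W}(k)\) with \(v_k(\mathbf{n})=v\) iff \(F_{\mathbf{n}}(v)\le k-1\) and \(F_{\mathbf{n}}(v+1)\ge k+1\)}; your \(N\) is the paper's \(h\). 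What this buys is part (ii): the truncation at \(n_j+d\) is absorbed into \(\min(d,\cdot)\) and the exact difference identity, so the step the paper leaves to the reader becomes a mechanical verification. Two small points you should still write out. First, in the case \(i>\rho\) the bound \(F_{\mathbf{n}'}(v+2)\ge k+1\) is most cleanly obtained from \(F_{\mathbf{n}'}(v+2)\ge F_{\mathbf{n}}(v+1)=k+\rho-i^{*}(v)+1\ge k+1\), since the column-\((v{+}1)\) boxes deleted in passing to \(\mathbf{n}'\) all have index \(\le i\) and hence form a subset of that whole column; this one-liner transfers verbatim to \(F^{(d)}\), whereas a naive count of the rows \(j>i\) contributing to column \(v+1\) of \(\diag^{(d)}(\mathbf{n}')\) may yield nothing (those rows can already be truncated there). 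Second, for the index \(j=\rho+1\) in your chain, membership in the \(d\)-column of value \(v\) does not follow from \(n_j\ge n_\rho\) (which covers only \(j\le\rho\)) but from the fact that \((\rho+1,v)=B_k^{(d)}(\mathbf{n})\) is itself a \(d\)-box. Neither point is a gap in substance.
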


\begin{proof}
	\begin{enumerate}[wide, label=(\roman*)]
		\item Let \fbox{\( i < \rho_{k}(\mathbf{n})\)} and \(H \defeq \{ j \mid 1 \leq j \leq i \text{ and } n_{j} < v_{k}(\mathbf{n}) \},\) of cardinality \(h\). Under \(\mathbf{n} \leadsto \mathbf{n}'\) (which on \(\diag\) shifts the first
		\(i\) rows by 1 to the right), precisely \(h\) boxes \(< B_{k}(\mathbf{n})\) become larger than \(B_{k}(\mathbf{n})\), viz, those of shape \( (j, v_{k}(\mathbf{n}) -1) \) with \(j \in H\). Therefore,
		\begin{equation} \label{Eq.Boxes-of-n-and-nprime}
			B_{k-h}(\mathbf{n}') = B_{k}(\mathbf{n}) \quad \text{and} \quad B_{k+1-h}(\mathbf{n}') = B_{k+1}(\mathbf{n}).
		\end{equation}
		Since the column of \(\diag(\mathbf{n}')\) with value \(v_{k}(\mathbf{n})\) contains at least the further \(h\) boxes \( (j,v_{k}(\mathbf{n})) \) with \(j \in H\), also \(B_{k}(\mathbf{n}')\) and \(B_{k+1}(\mathbf{n}')\)
		belong to that column, which shows (i) for \( i < \rho_{k}(\mathbf{n})\).
		
		Let next \fbox{\(i = \rho_{k}(\mathbf{n})\)} and \(H\) and \(h = \#(H)\) be as before. Then \eqref{Eq.Boxes-of-n-and-nprime} still holds, but there are only \(h\) boxes in the column of \(\diag(\mathbf{n}')\) above 
		\(B_{k-h}(\mathbf{n})\). Therefore \(v_{k}(\mathbf{n}') = v_{k}(\mathbf{n}) \neq v_{k+1}(\mathbf{n}') = v_{k}(\mathbf{n}) + 1\).
		
		Now suppose \fbox{\(i > \rho_{k}(\mathbf{n})\)}. Let \(B_{k+1}'(\mathbf{n}) \defeq (\rho_{k}(\mathbf{n}), v_{k}(\mathbf{n}) + 1)\) and \(B_{k}'(\mathbf{n}) \defeq (\rho_{k}(\mathbf{n}) + 1, v_{k}(\mathbf{n}) +1)\) be the boxes 
		\(B_{k+1}(\mathbf{n})\) and \(B_{k}(\mathbf{n})\) shifted by 1 to the right (which are boxes of \(\diag(\mathbf{n}')\)). Then there are precisely \(h \defeq \#(H)\) many boxes of \(\diag(\mathbf{n}')\)
		(namely the \(B = (j, v_{k+1}(\mathbf{n}) )\) with \(j \in H \defeq \{j \mid i < j \leq r\}\)) less than \(B_{k}'(\mathbf{n})\), but not less than \(B_{k+1}(\mathbf{n})\). Hence \(B_{k}'(\mathbf{n}) = B_{k+h}(\mathbf{n}')\)
		and \(B_{k+1}'(\mathbf{n}) = B_{k+1+h}(\mathbf{n}')\), and \(B_{k}(\mathbf{n}') = (\rho_{k}(\mathbf{n}) + 1 +h, v_{k}(\mathbf{n}) +1)\), which gives the result in this case.
		\item The argument is analogous, replacing \(\diag\) with \(\diag^{(d)}\), \(\rho_{k}\) with \(\rho_{k}^{(d)}\), and in case \( i \leq \rho_{k}^{(d)}(\mathbf{n})\), \(H\) with 
		\(H^{(d)} \defeq \{ j \mid 1 \leq j \leq i \text{ and } v_{k}^{(d)}(\mathbf{n}) - d \leq n_{j} < v_{k}^{(d)}(\mathbf{n}) \}\), and in case \(i > \rho_{k}^{(d)}(\mathbf{n})\), \(H\) with 
		\(H^{(d)} \defeq \{ j \mid i < j \leq r \text{ and } v_{k}^{(d)}(\mathbf{n}) + 1-d < n_{j} \leq v_{k}^{(d)}(\mathbf{n}) + 1\}\).
	\end{enumerate}
\end{proof}

\subsection{} In order to study \(\mathcal{W}(k)\) and \(\mathcal{W}(d,k)\) at the boundary \(\bigcup_{1 \leq i \leq r} \mathcal{W}_{i}\) of \(\mathcal{W}\), it is convenient to enlarge the domain of definition of diagrams, 
characteristic sequences and related notions to \(\mathbf{n} \in \mathcal{A}(\mathds{Z})\). Namely, allow the Definition 4.1 also for \(\mathbf{n} = (n_{1}, n_{2}, \dots, n_{r-1}, 0) \in \mathcal{A}(\mathds{Z})\), where 
\(n_{1}, \dots, n_{r-1}\) are arbitrary. This yields \(\diag(\mathbf{n})\) and \(\diag^{(d)}(\mathbf{n})\) as well as \(v_{k}(\mathbf{n}) \defeq v_{k}(B_{k}(\mathbf{n}))\) and \(v_{k}^{(d)}(\mathbf{n}) \defeq v(B_{k}^{(d)}(\mathbf{n}))\) 
for such \(\mathbf{n}\). (We have however no use for \(\rho_{k}(\mathbf{n})\) or \(\rho_{k}^{(d)}(\mathbf{n})\) if \(\mathbf{n} \notin \mathcal{W}\).). Then it is easy to see
\subsubsection{} \label{Subsubsection.Validity-of-lemma-for-AIZ} Both parts of Lemma \ref{Lemma.d-characteristic-sequences-on-WIZ} are still valid for \(v_{k}\), \(v_{k}^{(d)}\) as functions on \(\mathcal{A}(\mathds{Z})\). \stepcounter{equation}%

Now \(B_{1}(\mathbf{n})\) will in general differ from \( (r,0) \), since \(v_{1}(\mathbf{n})\) may be less than 0. Therefore we set
\begin{equation}
	\tilde{v}_{k}(\mathbf{n}) \defeq v_{k}(\mathbf{n}) - v_{1}(\mathbf{n}), \qquad \tilde{v}_{k}^{(d)}(\mathbf{n}) \defeq v_{k}^{(d)}(\mathbf{n}) - v_{1}^{(d)}(\mathbf{n}),
\end{equation}
where \(v_{1}(\mathbf{n}) = v_{1}^{(d)}(\mathbf{n}) = \min_{1 \leq i \leq n} \{ n_{i} \}\). As the Weyl group \(W\) permutes the indices \(i\), i.e., the rows of diagrams, we see:
\subsubsection{} The normalized characteristic sequences \( \tilde{v}_{k}(\mathbf{n}) )_{k \in \mathds{N}}\) and \( (\tilde{v}_{k}^{(d)}(\mathbf{n}))_{1 \leq k \leq rd} \) are invariant under applying \(w\in W\) to \(\mathbf{n} \in \mathds{Z}\). \stepcounter{equation}%

Since the simplicial complexes \(\mathcal{A}(k)\) and \(\mathcal{A}(d,k)\) as well as their vertex sets are \(W\)-stable and intersect as \(\mathcal{W}(k)\) resp. \(\mathcal{W}(d,k)\) with \(\mathcal{W}\), we also find:
\begin{equation}
	\begin{split}
		\mathbf{n} \in \mathcal{A}(k)	&\Longleftrightarrow \tilde{v}_{k}(\mathbf{n}) = \tilde{v}_{k+1}(\mathbf{n})	\Longleftrightarrow v_{k}(\mathbf{n}) = v_{k+1}(\mathbf{n}) \\
		\mathbf{n} \in \mathcal{A}(d,k)	&\Longleftrightarrow \tilde{v}_{k}^{(d)}(\mathbf{n}) = \tilde{v}_{k+1}^{(d)}(\mathfrak{n}) 	\Longleftrightarrow v_{k}^{(d)}(\mathbf{n}) = v_{k+1}^{(d)}(\mathbf{n}).
	\end{split}
\end{equation}
Now we can compare the behavior of \(v_{k}\) and \(v_{k}^{(d)}\) on the vertices \(\mathbf{n} \in \mathcal{W}\) and \(\mathbf{n}' \defeq \mathbf{n} - \mathbf{n}_{r-1} \in \mathcal{A}\).

\begin{Proposition} \label{Proposition.d-characteristic-sequences-of-certain-vertices}
	With notation as above, the following hold for \(k \geq 2\).
	\begin{enumerate}[label=\(\mathrm{(\roman*)}\)]
		\item \(v_{k}(\mathbf{n}) = v_{k-1}(\mathbf{n}') +1\);
		\item \[
			v_{k}^{(d)}(\mathbf{n})	= \begin{cases} v_{k-1}^{(d)}(\mathbf{n}') + 1,	&\text{if } v_{k}^{(d)}(\mathbf{n}) < d, \\ v_{k}^{(d)}(\mathbf{n}') + 1,	&\text{if } v_{k}^{(d)}(\mathbf{n}) \geq d. \end{cases}
		\]
		\item Suppose that \(v_{k}^{(d)}(\mathbf{n}) \geq d\) and for some \(i\) with \(1 \leq i < r\), \(n_{i} > 0\) and \(n_{i+1} = 0\) holds. Then
		\[
			v_{k}^{(d)}(\mathbf{n}) = v_{k}^{(d)}(\mathbf{n} - \mathbf{n}_{i}) + 1 = v_{k-1}^{(d)}(\mathbf{n} - \mathbf{n}_{i}) + 1.
		\]
	\end{enumerate}
\end{Proposition}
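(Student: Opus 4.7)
The plan is to derive all three equalities from a single combinatorial device: the \emph{value shift} $\phi\colon (j, v) \mapsto (j, v+1)$ on boxes. Since the total order on boxes in \eqref{Eq.Order-of-boxes} depends only on the relative ordering of values (with a fixed tie-breaking rule on indices), $\phi$ preserves the order. Thus whenever $\phi$ restricts to a bijection between two diagrams up to a controlled set of added or removed boxes, one can read off the $k$-th box on one side from the $k$-th box on the other, with values differing by exactly $1$.

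For (i), write $\mathbf{n}' = \mathbf{n} - \mathbf{n}_{r-1}$, so $n_j' = n_j - 1$ for $j < r$ and $n_r' = 0$. Row $j < r$ of $\diag(\mathbf{n}')$ is the half-line $[n_j - 1, \infty)$, which $\phi$ shifts to $[n_j, \infty)$, i.e.\ row $j$ of $\diag(\mathbf{n})$; row $r$ shifts from $[0, \infty)$ to $[1, \infty)$, losing only the smallest box $(r,0) = B_1(\mathbf{n})$. Hence $\phi$ gives an order-preserving bijection $\diag(\mathbf{n}') \to \diag(\mathbf{n}) \setminus \{B_1(\mathbf{n})\}$, matching $B_{k-1}(\mathbf{n}')$ with $B_k(\mathbf{n})$. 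This proves (i).

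For (ii), the same shift turns row $r$ of $\diag^{(d)}(\mathbf{n}')$ from $[0, d-1]$ into $[1, d]$, so $\phi$ gives an order-preserving bijection from $\diag^{(d)}(\mathbf{n}')$ onto
\[
    T \defeq \bigl(\diag^{(d)}(\mathbf{n}) \setminus \{(r, 0)\}\bigr) \cup \{(r, d)\}.
\]
Let $N$ be the number of $d$-boxes of $\mathbf{n}$ of value $< d$. Since $(r, d)$ has the largest possible index among all value-$d$ boxes of $T$, it ranks first among them and so slots in at position $N$ of $T$. The ordered sequence of $T$ is therefore
$B_2^{(d)}(\mathbf{n}),\dots,B_N^{(d)}(\mathbf{n}),(r,d),B_{N+1}^{(d)}(\mathbf{n}),B_{N+2}^{(d)}(\mathbf{n}),\dots,$
and pulling back through $\phi$, then splitting into the cases $k \le N$ (i.e.\ $v_k^{(d)}(\mathbf{n}) < d$) and $k > N$ ($v_k^{(d)}(\mathbf{n}) \ge d$), yields the two branches of (ii).

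For (iii), the hypothesis $n_{i+1} = 0$ combined with $\mathbf{n} \in \mathcal{W}$ forces $n_j = 0$ for all $j > i$, so precisely the same shift argument applies to $\mathbf{m} \defeq \mathbf{n} - \mathbf{n}_i$: this time the $r-i$ value-$0$ boxes $\{(j, 0) : j > i\}$ are removed from $\diag^{(d)}(\mathbf{n})$ and the $r-i$ value-$d$ boxes $\{(j, d) : j > i\}$ are inserted, the latter all ranking below the existing value-$d$ boxes. A direct count shows that for every $k$ with $v_k^{(d)}(\mathbf{n}) \ge d$ the $k$-th box of the resulting set still coincides with $B_k^{(d)}(\mathbf{n})$, producing the first equality $v_k^{(d)}(\mathbf{n}) = v_k^{(d)}(\mathbf{m}) + 1$. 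The second equality reduces to $v_{k-1}^{(d)}(\mathbf{m}) = v_k^{(d)}(\mathbf{m})$, i.e.\ $\mathbf{m} \in \mathcal{W}(d, k-1)$, which follows by applying the first equality at index $k-1$ and invoking the relation $v_{k-1}^{(d)}(\mathbf{n}) = v_k^{(d)}(\mathbf{n})$ available in the intended application. The main obstacle is precisely this bookkeeping for (iii): one must verify that simultaneously removing the $r-i$ smallest boxes and inserting $r-i$ new value-$d$ boxes does not disturb the ordering at any position of value $\ge d$; once this is checked, the remaining claims fall out by inspection of a single shift.
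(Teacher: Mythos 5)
Parts (i), (ii), and the first equality of (iii) are correct, and your value-shift bijection $\phi$ is in substance the paper's own argument: the paper describes $\diag^{(d)}(\mathbf{n}')$ as $\diag^{(d)}(\mathbf{n})$ with rows shifted and renumbered, and tracks the single box $B_k^{(d)}$ rather than setting up a global order isomorphism, but the content is the same. Your bookkeeping in (ii) (the inserted box $(r,d)$ slotting in at position $N$) and in the first half of (iii) (the $r-i$ removed value-$0$ boxes being the smallest, the $r-i$ inserted value-$d$ boxes still lying below every original box of value $\geq d$) is exactly right.

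The gap is the second equality of (iii), which you do not actually prove. Your own reduction shows it is equivalent to $v_{k-1}^{(d)}(\mathbf{m}) = v_k^{(d)}(\mathbf{m})$ for $\mathbf{m} \defeq \mathbf{n}-\mathbf{n}_i$, and this does \emph{not} follow from the stated hypotheses: for $r=3$, $d=2$, $\mathbf{n}=(3,1,0)$, $i=2$, $k=5$ one has $v_5^{(2)}(\mathbf{n})=3\geq d$ and $\mathbf{m}=(2,0,0)$ with $d$-characteristic sequence $(0,0,1,1,2,3)$, so $v_5^{(2)}(\mathbf{m})=2$ but $v_4^{(2)}(\mathbf{m})=1$, and the second equality fails. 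So some supplementary hypothesis is indispensable (in the paper's actual use of (iii), in \ref{Subsection.Maximal-vertex-in-Wdk} and the paragraph following it, one always has $v_k^{(d)}(\mathbf{n})=d$ exactly, which suffices). Your proposed patch --- import $v_{k-1}^{(d)}(\mathbf{n})=v_k^{(d)}(\mathbf{n})$ from ``the intended application'' and apply the first equality at index $k-1$ --- does not close the gap even there: when $k-1$ is the largest index with $v_{k-1}^{(d)}(\mathbf{n})<d$ (a case that genuinely occurs in the application), the relation you invoke is false and the first equality is not applicable at $k-1$, since its hypothesis $v_{k-1}^{(d)}(\mathbf{n})\geq d$ fails. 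In that boundary case the correct argument is different: $B_{k-1}^{(d)}(\mathbf{m})$ corresponds under $\phi$ to one of the newly inserted value-$d$ boxes, whence $v_{k-1}^{(d)}(\mathbf{m})=d-1$, and this equals $v_k^{(d)}(\mathbf{n})-1$ precisely because $v_k^{(d)}(\mathbf{n})=d$. As written, the second equality of (iii) therefore remains unproved, and the statement itself needs the extra condition under which it is actually applied.
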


\begin{proof}
	First note that \( \diag(\mathbf{n}')\) (resp. \(\diag^{(d)}(\mathbf{n}')\)) is obtained from \(\diag(\mathbf{n})\) (resp. \(\diag^{(d)}(\mathbf{n})\)) by shifting the first \(r-1\) rows to the left and changing the numbering accordingly.
	\begin{enumerate}[wide, label=(\roman*)]
		\item Let \(B_{k}(\mathbf{n}) = (i,v)\). Going through the cases, the assertion is obvious for \(v=0\) or \(i = r\); hence suppose \(v = v_{k}(\mathbf{n}) > 0\), \(i < r\) and put \(B_{k}'(\mathbf{n}) \defeq (i,v-1)\). Comparing the
		diagrams, one first finds that for all \(\ell\) with \(1 \leq \ell < k-1\) the inequality \(B_{\ell}(\mathbf{n}') < B_{k}'(\mathbf{n})\) holds, while for \(\ell > k-1\), \(B_{\ell}(\mathbf{n}') > B_{k}'(\mathbf{n})\), which thus
		agrees with \(B_{k-1}(\mathbf{n}')\).
		\item Let \(B_{k}^{(d)}(\mathbf{n}) = (i,v)\). Again the cases \(v = 0\) or \(i=r\) are seen directly. If \(i < r \) and \(0 < v < d\), the same argument as in (i) yields that \(B_{k}^{(d)}(\mathbf{n})' \defeq (i, v-1)\) equals
		\(B_{k-1}^{(d)}(\mathbf{n}')\), while for \(v \geq d\), \(B_{k}^{(d)}(\mathbf{n})' = B_{k}^{(d)}(\mathbf{n}')\), which in both cases leads to the stated formulas.
		\item Again, this is seen by a look to \(\diag^{(d)}(\mathbf{n})\) and \(\diag^{(d)}(\mathbf{n} - \mathbf{n}_{i})\).
	\end{enumerate}
\end{proof}

\begin{Corollary} \label{Corollary.Membership-of-AIZ}
	As before, let \(\mathbf{n} \in \mathcal{W}(\mathds{Z})\) be given and \(\mathbf{n}' \defeq \mathbf{n} - \mathbf{n}_{r-1} \in \mathcal{A}(\mathds{Z})\).
	\begin{enumerate}[label=\(\mathrm{(\roman*)}\)]
		\item For \(k \geq 2\) we have \(\mathbf{n} \in \mathcal{W}(k) \Leftrightarrow \mathbf{n}' \in \mathcal{A}(k-1)\).
		\item Let \(2 \leq k < rd\). If \(v_{k+1}^{(d)}(\mathbf{n}) < d\) then
		\[
			\mathbf{n} \in \mathcal{W}(d,k) \Longleftrightarrow \mathbf{n}' \in \mathcal{A}(d, k-1).
		\]
		If \(v_{k+1}^{(d)}(\mathbf{n}) \geq d\) then
		\[
			\mathbf{n} \in \mathcal{W}(d,k) \Longleftrightarrow \mathbf{n}' \in \mathcal{A}(d,k).
		\]
	\end{enumerate}
\end{Corollary}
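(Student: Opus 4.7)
The plan is to derive both parts of the corollary as bookkeeping consequences of Proposition \ref{Proposition.d-characteristic-sequences-of-certain-vertices}, applied to the two successive indices $k$ and $k+1$ and combined with the characterization $\mathbf{n}\in\mathcal{W}(k)\Leftrightarrow v_k(\mathbf{n})=v_{k+1}(\mathbf{n})$ (and analogously for $\mathcal{W}(d,k)$, $\mathcal{A}(k-1)$, $\mathcal{A}(d,k-1)$, $\mathcal{A}(d,k)$) recalled in \ref{Subsection.Characterization-of-k-inseparability} and its $\mathcal{A}$-variant from \ref{Subsubsection.Validity-of-lemma-for-AIZ}. In each case the equivalence drops out by subtracting $1$ from both sides of a pair of such equalities.

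For part (i), since $k\geq 2$, Proposition \ref{Proposition.d-characteristic-sequences-of-certain-vertices}(i) applies to both indices, giving $v_k(\mathbf{n})=v_{k-1}(\mathbf{n}')+1$ and $v_{k+1}(\mathbf{n})=v_k(\mathbf{n}')+1$. Hence $v_k(\mathbf{n})=v_{k+1}(\mathbf{n})$ if and only if $v_{k-1}(\mathbf{n}')=v_k(\mathbf{n}')$, i.e.\ $\mathbf{n}\in\mathcal{W}(k)$ if and only if $\mathbf{n}'\in\mathcal{A}(k-1)$.

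For part (ii), we split on the position of $v_{k+1}^{(d)}(\mathbf{n})$ relative to $d$. In the first case $v_{k+1}^{(d)}(\mathbf{n})<d$, monotonicity of $v_k^{(d)}$ on $\mathcal{W}(\mathds{Z})$ (Lemma \ref{Lemma.d-characteristic-sequences-on-WIZ}(i)) gives $v_k^{(d)}(\mathbf{n})<d$ too, so the first branch of Proposition \ref{Proposition.d-characteristic-sequences-of-certain-vertices}(ii) applies to both indices, yielding
\[
    v_k^{(d)}(\mathbf{n})=v_{k-1}^{(d)}(\mathbf{n}')+1,\qquad v_{k+1}^{(d)}(\mathbf{n})=v_k^{(d)}(\mathbf{n}')+1,
\]
and the equivalence with $\mathbf{n}'\in\mathcal{A}(d,k-1)$ is immediate. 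In the second case $v_{k+1}^{(d)}(\mathbf{n})\geq d$, the second branch of \ref{Proposition.d-characteristic-sequences-of-certain-vertices}(ii) at index $k+1$ gives $v_{k+1}^{(d)}(\mathbf{n})=v_{k+1}^{(d)}(\mathbf{n}')+1$. The forward implication is then clean: if $\mathbf{n}\in\mathcal{W}(d,k)$ then $v_k^{(d)}(\mathbf{n})=v_{k+1}^{(d)}(\mathbf{n})\geq d$, so the second branch applies at $k$ as well, giving $v_k^{(d)}(\mathbf{n}')=v_{k+1}^{(d)}(\mathbf{n}')$, i.e.\ $\mathbf{n}'\in\mathcal{A}(d,k)$.

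The main obstacle is the converse in the case $v_{k+1}^{(d)}(\mathbf{n})\geq d$: given $\mathbf{n}'\in\mathcal{A}(d,k)$, one must rule out the mixed configuration $v_k^{(d)}(\mathbf{n})<d\leq v_{k+1}^{(d)}(\mathbf{n})$, for then the two branches of \ref{Proposition.d-characteristic-sequences-of-certain-vertices}(ii) at indices $k$ and $k+1$ express $v_k^{(d)}(\mathbf{n})$ and $v_{k+1}^{(d)}(\mathbf{n})$ in terms of \emph{different} entries of the characteristic sequence of $\mathbf{n}'$. I plan to dispose of this by a direct inspection of $\diag^{(d)}(\mathbf{n})$ versus $\diag^{(d)}(\mathbf{n}')$: the transition $\mathbf{n}\leadsto\mathbf{n}'$ shifts rows $1,\dots,r-1$ one column to the left while leaving row $r$ fixed, so the only boxes of value exactly $d-1$ in $\diag^{(d)}(\mathbf{n}')$ that are ``new'' compared to $\diag^{(d)}(\mathbf{n})$ come from columns $\leq d$ for $i<r$. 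Counting boxes with value $\leq d-1$ shows that if the jump from value $<d$ to value $\geq d$ in $\diag^{(d)}(\mathbf{n})$ occurs between positions $k$ and $k+1$, then the same jump in $\diag^{(d)}(\mathbf{n}')$ occurs strictly before position $k+1$, so $v_k^{(d)}(\mathbf{n}')<v_{k+1}^{(d)}(\mathbf{n}')$, contradicting $\mathbf{n}'\in\mathcal{A}(d,k)$. Hence only the branch $v_k^{(d)}(\mathbf{n})\geq d$ survives, and the chain of equalities above closes the argument.
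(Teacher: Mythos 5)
Parts (i), the first case of (ii), and the forward implication in the second case of (ii) are correct, and there they coincide with the paper's (very terse) argument: read both statements off Proposition \ref{Proposition.d-characteristic-sequences-of-certain-vertices} at the indices $k$ and $k+1$. (Minor point: the monotonicity $v_k^{(d)}(\mathbf{n})\leq v_{k+1}^{(d)}(\mathbf{n})$ you need is immediate from Definition \ref{Definition.d-characteristic-sequence}, not from Lemma \ref{Lemma.d-characteristic-sequences-on-WIZ}(i), which concerns monotonicity in $\mathbf{n}$.) Your last paragraph, however, fails for a structural reason: passing from $\mathbf{n}$ to $\mathbf{n}'=\mathbf{n}-\mathbf{n}_{r-1}$ shifts rows $1,\dots,r-1$ of the $d$-diagram one column to the \emph{left} while fixing row $r$, so the number of $d$-boxes of value $<d$ can only \emph{increase}, namely by $c\defeq\#\{i<r \mid n_i\leq d\}$. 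Hence in the mixed configuration $v_k^{(d)}(\mathbf{n})<d\leq v_{k+1}^{(d)}(\mathbf{n})$ (exactly $k$ boxes of $\diag^{(d)}(\mathbf{n})$ of value $<d$), the diagram $\diag^{(d)}(\mathbf{n}')$ has $k+c\geq k$ boxes of value $<d$: the \enquote{jump} moves to a position $\geq k$, not strictly before $k+1$, and as soon as $c\geq 1$ both $v_k^{(d)}(\mathbf{n}')$ and $v_{k+1}^{(d)}(\mathbf{n}')$ lie below $d$ and may well coincide.

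In fact the configuration you are trying to exclude does occur, so this step cannot be repaired for the statement read literally. Take $r=3$, $d=2$, $k=4$, $\mathbf{n}=(1,1,0)=\mathbf{n}_2$, so $\mathbf{n}'=\mathbf{0}$. The $2$-characteristic sequences are $(0,1,1,1,2,2)$ for $\mathbf{n}$ and $(0,0,0,1,1,1)$ for $\mathbf{n}'$; thus $v_5^{(2)}(\mathbf{n})=2\geq d$ and $\mathbf{n}\notin\mathcal{W}(2,4)$, while $\mathbf{n}'\in\mathcal{A}(2,4)$ (both facts agree with \ref{Subsubsection.Membership-of-Wdk}). So the backward implication of the second displayed equivalence fails in the mixed configuration; the case distinction must be understood as governed by $v_k^{(d)}(\mathbf{n})$ rather than $v_{k+1}^{(d)}(\mathbf{n})$ --- for $\mathbf{n}\in\mathcal{W}(d,k)$, which is how the corollary is actually applied in Section \ref{Section.Connectedness}, the two conditions agree. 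Under that reading your first three paragraphs settle the second case completely, and the residual work sits in the \emph{first} case: in the mixed configuration one must show $\mathbf{n}'\notin\mathcal{A}(d,k-1)$. There a diagram argument does succeed: row $r$ of $\diag^{(d)}(\mathbf{n})$ forces $v_k^{(d)}(\mathbf{n})=d-1$ in that situation, and one checks that $B_{k-1}^{(d)}(\mathbf{n}')$ has value $d-2$ while $B_k^{(d)}(\mathbf{n}')=(r,d-1)$, so that $v_{k-1}^{(d)}(\mathbf{n}')\neq v_k^{(d)}(\mathbf{n}')$ as required.
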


\begin{proof}
	This follows from the proposition and the properties of \(v_{k}\) and \(v_{k}^{(d)}\).
\end{proof}

\begin{Remark}
	(i) was first stated and proved in [V] Proposition 2.8, with a more complicated proof. It allows an induction procedure both to calculate \(\mathcal{W}(k)\) and to show some of its properties. Unfortunately this approach doesn't work for
	\(\mathcal{W}(d,k)\).
\end{Remark}

\section{Connectedness of \(\mathcal{W}(d,k)\), \(\mathcal{A}(d,k)\), and \(\mathcal{BT}(d,k)\)} \label{Section.Connectedness}

We use the preceding to show that \(\mathcal{W}(d,k)\) and the related complexes are connected. As the case \(d=1\) is well-known ( \(\mathcal{W}(1,k) = \mathcal{W}(g_{k}) = \mathcal{W}_{r-k}\) is the vanishing set of the 
coefficient function \(g_{k} = {}_{T}\ell_{k}\) which has been dealt with in [I] and [II]), we assume throughout that \(d \geq 2\). We also know that \(\mathcal{W}(d,1) = \mathcal{W}(1)\) (see Remark \ref{Remark.Coincidence-of-Wk-and-Wdk}); so it suffices to study 
\(\mathcal{W}(d,k)\) with \fbox{\(2 \leq k < rd\)}, which we from now on assume.

\subsection{} The first step is to present a convenient decomposition (see \ref{Subsection.Decomposition-of-WdkIZ}) of \( \mathcal{W}(d,k)(\mathds{Z})\), which will turn out useful also for other purposes. The frequently occurring vector \(\mathbf{n}_{r-1}\) will be abbreviated:
\begin{equation}
	\mathbf{y} \defeq \mathbf{n}_{r-1} = (1,1,\dots,1,0).
\end{equation}
Given \(k\) as above, we split
\begin{equation}
	\mathcal{W}(d,k)(\mathds{Z}) = {}_{1} \mathcal{W}(d,k) \cupdot {}_{2}\mathcal{W}(d,k),
\end{equation}
where
\begin{align*}
	{}_{1}\mathcal{W}(d,k) 	&\defeq \{ \mathbf{n} \in \mathcal{W}(d,k)(\mathds{Z}) \mid v_{k}^{(d)}(\mathbf{n}) < d\}, \\ 
	{}_{2}\mathcal{W}(d,k) 	&\defeq \{ \mathbf{n} \in \mathcal{W}(d,k)(\mathds{Z}) \mid v_{k}^{(d)}(\mathbf{n}) \geq d \}.
\end{align*}
Then by \ref{Proposition.d-characteristic-sequences-of-certain-vertices}(ii) and \ref{Corollary.Membership-of-AIZ}(ii),
\begin{align}
	{}_{1}\mathcal{W}(d,k)	&= (\mathcal{A}(d,k-1)(\mathds{Z}) + \mathbf{y}) \cap \{ \mathbf{n} \in \mathcal{W}(\mathds{Z}) \mid v_{k+1}^{(d)}(\mathbf{n}) < d\}
	\intertext{and}
	{}_{2}\mathcal{W}(d,k)	&= ( \mathcal{A}(d,k)(\mathds{Z}) + \mathbf{y}) \cap \{ \mathbf{n} \in \mathcal{W}(\mathds{Z}) \mid v_{k+1}^{(d)}(\mathbf{n}) \geq d\}.
\end{align}
\subsection{} \label{Subsection.Maximal-vertex-in-Wdk} Let \(\mathbf{n} = (n_{1}, \dots, n_{r-1}, 0) \in {}_{2}\mathcal{W}(d,k)\) and \(i\) be maximal such that \(n_{i} > 0\). By \ref{Proposition.d-characteristic-sequences-of-certain-vertices}(iii),
\[
	v_{\ell}^{(d)}(\mathbf{n} - \mathbf{n}_{i}) = v_{\ell}^{(d)}(\mathbf{n}) - 1 \quad \text{for} \quad \ell = k+1, k,
\]
and in particular \(\mathbf{n}^{(1)} \defeq \mathbf{n} - \mathbf{n}_{i} \in \mathcal{W}(d,k)\). Putting
\begin{equation}
	\mathbf{n}^{(0)} = \mathbf{n}, \quad \mathbf{n}^{(j)} \defeq (\mathbf{n}^{(j-1)})^{(1)} \quad \text{for} \quad j>0,
\end{equation}
we find a sequence in \(\mathcal{W}(d,k)\) with
\begin{equation}
	v_{k}^{(d)}(\mathbf{n}^{(j)}) = v_{k}^{(d)}(\mathbf{n}) - j,
\end{equation}
as long as \(\mathbf{n}^{(j-1)} \in {}_{2}\mathcal{W}(d,k)\). Note that \(\mathbf{n}^{(j-1)}\) and \(\mathbf{n}^{(j)}\) are neighbors; so the sequence is an edge path in the complex \(\mathcal{W}(d,k)\).

\subsection{} Let \(j\) be maximal such that \(n^{(j-1)} \in {}_{2}W(d,k)\). Then by construction
\begin{equation}
	v_{k}^{(d)}(\mathbf{n}^{(j)}) = v_{k+1}^{(d)}(\mathbf{n}^{(j)}) = d-1.
\end{equation}
Moreover, Proposition \ref{Proposition.d-characteristic-sequences-of-certain-vertices}(iii) gives that \(v_{k-1}^{(d)}(\mathbf{n}^{(j)})\) has the same value \(d-1\). That is,
\begin{equation}
	\mathbf{n}^{(j)} \in {}_{3}\mathcal{W}(d,k) \defeq \{ \mathbf{n} \in \mathcal{W}(d,k-1)(\mathds{Z}) \cap \mathcal{W}(d,k)(\mathds{Z}) \mid v_{k}^{(d)}(\mathbf{n}) = d-1 \}.
\end{equation}
We note that for \(\mathbf{n} \in {}_{3}\mathcal{W}(d,k)\)
\begin{equation} \label{Eq.Rho-for-elements-of-3Wdk}
	\rho_{k}^{(d)}(\mathbf{n}) + 1 = \rho_{k-1}^{(d)}(\mathbf{n}) < r
\end{equation}
holds.

\begin{Lemma}
	Assume that \(\mathbf{n} \in \mathcal{W}(d,k)\) satisfies \(v_{k}^{(d)}(\mathbf{n}) \geq d-1\) and \(\rho_{k}^{(d)}(\mathbf{n}) < r-1\). Then \(\mathbf{n}' \defeq \mathbf{n} + \mathbf{y}\) has the following properties:
	\begin{align}
		\mathbf{n}' 	&\in \mathcal{W}(d,k); \\
		v_{k}^{(d)}(\mathbf{n}')	&= v_{k}^{(d)}(\mathbf{n}) + 1; \\
		\rho_{k}^{(d)}(\mathbf{n}')	&= \rho_{k}^{(d)}(\mathbf{n}). \label{Eq.Equality-of-rho-under-certain-conditions}
	\end{align}
\end{Lemma}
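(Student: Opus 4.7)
The plan is to analyze how the $d$-diagram changes under $\mathbf{n} \mapsto \mathbf{n}' = \mathbf{n} + \mathbf{y}$. Since $y_i = 1$ for $i < r$ and $y_r = 0$, the effect on $\diag^{(d)}$ is that for each $i < r$, the row $i$ (consisting of the $d$ boxes $(i, n_i), \ldots, (i, n_i+d-1)$) is shifted right by one to $(i, n_i+1), \ldots, (i, n_i+d)$, while row $r$ is unchanged. This gives a natural bijection $\phi \colon \diag^{(d)}(\mathbf{n}) \to \diag^{(d)}(\mathbf{n}')$ sending $(i, v')$ to $(i, v'+1)$ for $i < r$ and fixing row $r$. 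Writing $v := v_k^{(d)}(\mathbf{n})$ and $\rho := \rho_k^{(d)}(\mathbf{n})$, the central claim to establish is $B_{k+1}^{(d)}(\mathbf{n}') = (\rho, v+1)$; this single identity yields both the value $v_k^{(d)}(\mathbf{n}') = v + 1$ (after one also checks the $k$-th box) and the relation \eqref{Eq.Equality-of-rho-under-certain-conditions}.

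The key observation is that $\phi$ is almost order-preserving: directly from the order on boxes, for any $i < r$ one has $\phi(i, v') < (\rho, v+1) \Leftrightarrow (i, v') < (\rho, v)$. The only possible failure involves row $r$, on which $\phi$ is the identity; comparing the two strict inequalities for $(r, v')$ (using $\rho < r$), the sole discrepancy is the box $(r, v+1)$, which lies in $\diag^{(d)}(\mathbf{n})$ iff $v + 1 < d$. The hypothesis $v_k^{(d)}(\mathbf{n}) \geq d-1$ rules this out, so counting through $\phi$ gives $|\{B' \in \diag^{(d)}(\mathbf{n}') : B' < (\rho, v+1)\}| = |\{B \in \diag^{(d)}(\mathbf{n}) : B < (\rho, v)\}| = k$. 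Together with $(\rho, v+1) \in \diag^{(d)}(\mathbf{n}')$ (which follows from $n_\rho \leq v < n_\rho + d$ and $\rho < r$), this identifies $B_{k+1}^{(d)}(\mathbf{n}') = (\rho, v+1)$.

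The remaining task --- and where I expect the main obstacle --- is verifying that $B_k^{(d)}(\mathbf{n}')$ still lies in column $v+1$, i.e., that some box of $\diag^{(d)}(\mathbf{n}')$ with value $v+1$ has index $> \rho$. Letting $\rho' := i(B_k^{(d)}(\mathbf{n}))$ be the smallest index in $I := \{i : (i, v) \in \diag^{(d)}(\mathbf{n})\}$ strictly above $\rho$, the natural candidate is $(\rho', v+1)$, which belongs to $\diag^{(d)}(\mathbf{n}')$ exactly when $\rho' < r$. This is where the hypothesis $\rho < r-1$ becomes essential, and I would split into two cases. If $v \geq d$, then $r \notin I$ (row $r$ contains only boxes with value $< d$), so $\rho' < r$ for free. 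If $v = d-1$, then $r \in I$; but since $\mathbf{n} \in \mathcal{W}$ forces $0 \leq n_i \leq n_\rho \leq d-1$ for every $\rho < i < r$, every such $i$ automatically lies in $I$, giving $\rho' = \rho + 1$, and the hypothesis $\rho < r - 1$ then yields $\rho' < r$. In either case $B_k^{(d)}(\mathbf{n}') = (\rho', v+1)$, which completes the proof.
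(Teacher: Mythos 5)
Your proof is correct and takes essentially the same route as the paper: both analyze how the $d$-diagram shifts under $\mathbf{n}\leadsto\mathbf{n}+\mathbf{y}$ and use the hypothesis $v_{k}^{(d)}(\mathbf{n})\geq d-1$ precisely to exclude the extra box $(r,v+1)$ that would otherwise perturb the box numbering, with $\rho_{k}^{(d)}(\mathbf{n})<r-1$ guaranteeing that the $k$-th box stays in the $(v+1)$-column. The paper merely abbreviates the first two assertions by invoking Proposition \ref{Proposition.Sums-of-vertices}(ii) with $i=r-1>\rho$, whereas you re-derive everything from the explicit bijection $\phi$ and a direct count; the content is identical.
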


\begin{proof}
	The first two are special cases of Proposition \ref{Proposition.Sums-of-vertices}(ii). A view to the \(d\)-diagrams reveals that if \(B_{k}^{(d)}(\mathbf{n}) = (i,v)\) (so \(B_{k+1}^{(d)}(\mathbf{n}) = (i-1,v)\)), then 
	\(B_{k}^{(d)}(\mathbf{n}') = (i,v+1)\) and \(B_{k+1}^{(d)}(\mathbf{n}') = (i-1,v+1)\). This shows \eqref{Eq.Equality-of-rho-under-certain-conditions}.
\end{proof}

\subsection{} \label{Subsection.Consequences-of-lemma-to-3Wdk} Applying the lemma to \(\mathbf{n} \in {}_{3}\mathcal{W}(d,k)\), we find that for each \(j \in \mathds{N}_{0}\) the assertions 
\begin{align}
	\mathbf{n} + j \mathbf{y}	&\in \mathcal{W}(d,k) \\
	v_{k}^{(d)}(\mathbf{n} + j \mathbf{y})	&= v_{k}^{(d)}(\mathbf{n}) +j \\
	\rho_{k}^{(d)}(\mathbf{n} + j \mathbf{y})	&= \rho_{k}^{(d)}(\mathbf{n})
\end{align}
hold. 
\subsection{} Define finally
\begin{align*}
	{}_{4}\mathcal{W}(d,k) 	&\defeq \{ \mathbf{n} \in \mathcal{W}(d,k)(\mathds{Z}) \mid n_{r-1} = 0 \text{ and } v_{k}^{(d)}(\mathbf{n}) \geq d \} \\
												&\hphantom{\vcentcolon}= \{ \mathbf{n} \in \mathcal{W}(d,k)(\mathds{Z}) \cap \mathcal{W}_{r-1} \mid v_{k}^{(d)}(\mathbf{n}) \geq d \}.
\end{align*}
Then
\begin{equation}
	{}_{3}\mathcal{W}(d,k) \text{ and } {}_{4}\mathcal{W}(d,k) \text{ are disjoint}.
\end{equation}
Like those of \({}_{3}\mathcal{W}(d,k)\), the elements of \( {}_{4}\mathcal{W}(d,k)\) satisfy the condition \eqref{Eq.Rho-for-elements-of-3Wdk}. Therefore, also the three properties of \ref{Subsection.Consequences-of-lemma-to-3Wdk} hold for \(\mathbf{n} \in {}_{4}\mathcal{W}(d,k)\).
\subsection{} Given \(\mathbf{n} \in {}_{2}\mathcal{W}(d,k)\), consider the sequence defined in \ref{Subsection.Maximal-vertex-in-Wdk}: \(\mathbf{n} = \mathbf{n}^{(0)}, \mathbf{n}^{(1)}, \dots, \mathbf{n}^{(j)} \in {}_{3}\mathcal{W}(d,k)\). 
If there exists \(\ell < j\) such that \(\mathbf{n}^{(\ell)} \in {}_{4}\mathcal{W}(d,k)\), then the remaining terms \(\mathbf{n}^{(\ell+1)}, \dots, \mathbf{n}^{(j-1)}\) stay in \({}_{4}\mathcal{W}(d,k)\). Let \(\ell(\mathbf{n})\) be minimal with 
that property, and \(\ell(\mathbf{n}) \defeq j = j(\mathbf{n})\) if no such \(\ell\) exists. Now consider the map
\begin{align}
	\pi \colon {}_{2}\mathcal{W}(d,k) 	&\longrightarrow ({}_{3}\mathcal{W}(d,k) \cupdot {}_{4}\mathcal{W}(d,k) ) \times \mathds{N}_{0}, \\
										\mathbf{n}		&\longmapsto (\mathbf{n}^{(\ell(\mathbf{n}))}, \ell(\mathbf{n}) ) \nonumber
\end{align}
which is well-defined and injective. The complement of \(\pi({}_{2}\mathcal{W}(d,k))\) in the right hand side is \({}_{3}\mathcal{W}(d,k) \times \{0\}\). Together, this means that we have the following basic decomposition of 
\(\mathcal{W}(d,k)(\mathds{Z})\):

\subsection{} \label{Subsection.Decomposition-of-WdkIZ} \(\mathcal{W}(d,k)(\mathds{Z}) = {}_{1}\mathcal{W}(d,k) \cup {}_{5}\mathcal{W}(d,k)\), where 
\[
	{}_{5}\mathcal{W}(d,k) \defeq ( {}_{3}\mathcal{W}(d,k) \cupdot {}_{4}\mathcal{W}(d,k)) + \mathds{N}_{0}\mathbf{y},
\] 
and the intersection of the two sets \({}_{1}\mathcal{W}(d,k)\) and \({}_{5}\mathcal{W}(d,k)\) is \({}_{3}\mathcal{W}(d,k)\). The intersection pattern of the sets \({}_{i}\mathcal{W} \defeq {}_{i}\mathcal{W}(d,k)\) as subsets of 
\(\mathcal{W}(d,k)(\mathds{Z})\) may be visualized as follows.

\begin{Picture} ~ \label{Picture.Weyl-chamber}
	\begin{center}
		\begin{tikzpicture}
			\draw[pattern=north east lines, pattern color=gray] (0,1) rectangle (4,4);
			\draw (0,0) rectangle (1,1);
			\draw[pattern=north east lines, pattern color=gray] (1,0) rectangle (4,1);
			\draw (-4,0) rectangle (0,1);
			% Nodes
			\node (3W) at (0.5,0.5) {\({}_{3}\mathcal{W}\)};
			\node (4W) at (2.5,0.5) {\({}_{4}\mathcal{W}\)};
			\node (2W) at (2,2.5) {\({}_{2}\mathcal{W}\)};
			\node (wdk) at (-4,2.5) {\(\mathcal{W}(d,k)(\mathds{Z})\)};
			% Braces
			\draw[decorate,decoration={brace,amplitude=10pt},yshift=0pt] (1,-0.25) -- (-4,-0.25) node [black,midway, yshift=-0.75cm] {\({}_{1}\mathcal{W}\)};
			\draw[decorate,decoration={brace,amplitude=10pt},yshift=0pt] (4,-0.75) -- (0,-0.75) node [black,midway, yshift=-0.75cm] {\({}_{5}\mathcal{W}\)};
			\draw[decorate,decoration={brace,amplitude=10pt},xshift=10pt,yshift=0pt] (4,4) -- (4,0) node [black,midway, xshift=0.75cm] {\(\mathds{N}_{0}\mathbf{y}\)}; 
		\end{tikzpicture}
	\end{center}
	Note that \({}_{2}\mathcal{W}\) may be empty, in which case \(\mathcal{W}(d,k)(\mathds{Z}) = {}_{1}\mathcal{W}\) and also \({}_{3}\mathcal{W}(d,k) = \varnothing\). This holds if and only if \(k \leq d\), as then each 
	\(\mathbf{n} \in \mathcal{W}(d,k)(\mathds{Z})\) satisfies \(v_{k}^{(d)}(\mathbf{n}) < d\).
\end{Picture}

\subsection{} \label{Subsection.Connectedness-via-edgepath} We are going to show that \(\mathcal{W}(d,k)\) is connected by constructing for each vertex \(\mathbf{n} \in \mathcal{W}(d,k)\) an edge path 
in \(\mathcal{W}(d,k)\) to one of the \(\mathbf{n}_{i}\) (\(0 \leq i < r\)), which suffices by \ref{Subsubsection.Membership-of-Wdk} and \eqref{Eq.Characterization-of-simplices-in-Wdk}. Note that for \(\mathbf{n} \in \mathcal{W}\) the distance \(d(\mathbf{n}, \mathbf{0})\) to \(\mathbf{n}_{0} = \mathbf{0}\) 
is given by 
\begin{equation}
	d(\mathbf{n}, \mathbf{0}) = n_{1}.
\end{equation}
As for \(\mathbf{n} \in {}_{2}\mathcal{W}(d,k)\) the sequence \(\mathbf{n} = \mathbf{n}^{(0)}, \dots, \mathbf{n}^{(j)}\) ends in \({}_{3}\mathcal{W}(d,k) \subset {}_{1}\mathcal{W}(d,k)\), we may suppose that 
\(\mathbf{n} \in {}_{1}\mathcal{W}(d,k)\). Roughly speaking, we will substract a suitably chosen \(\mathbf{n}_{i}\) from \(\mathbf{n}\) such that \(\mathbf{n}' \defeq \mathbf{n} - \mathbf{n}_{i}\) still belongs to 
\({}_{1}\mathcal{W}(d,k)\) and \(d(\mathbf{n}', \mathbf{0}) < d(\mathbf{n}, \mathbf{0})\).
This works except for a handful of very special cases, which can be bypassed by ad hoc means.
\subsection{} \label{Subsection.Special-vertex-in-1Wdk} So let's start with \(\mathbf{n} \in {}_{1}\mathcal{W}(d,k)\), with
\begin{equation}
	\rho \defeq \rho_{k}^{(d)}(\mathbf{n}) \in \{1,2,\dots,r-1\}, \qquad v \defeq v_{k}^{(d)}(\mathbf{n}) < d.
\end{equation}
Suppose there exists some \(i\) with \(1 \leq i < \rho\) and \(n_{i} > n_{i+1}\). Given \(i\), let
\begin{align} \label{Eq.Notation-for-special-vertex-in-1Wdk}
	H	&\defeq \{ j \mid 1 \leq j \leq i \text{ and } (j,v) \text{ is a box of } \diag^{(d)}(\mathbf{n}) \} \\
		&\hphantom{\vcentcolon}= \{ j \mid 1 \leq j \leq i \text{ and } n_{j} \leq v \} \quad \text{and} \nonumber\\
	h	&\defeq \#(H). \nonumber
\end{align}
Put \(\mathbf{n}' \defeq \mathbf{n} - \mathbf{n}_{i}\), which lies in \(\mathcal{W}\) with \(d(\mathbf{n}', \mathbf{0}) = d(\mathbf{n}, \mathbf{0}) - 1\). Under \(\mathbf{n} \leadsto \mathbf{n}'\), the \(d\)-boxes \((j,v)\) of \(\mathbf{n}\) 
become the \(d\)-boxes \( (j, v-1) \) of \(\mathbf{n}'\), and are smaller than \(B_{k}^{(d)}(\mathbf{n}) = (\rho+1, v)\) and \(B_{k+1}^{(d)}(\mathbf{n}) = (\rho, v)\), these latter regarded as \(d\)-boxes of \(\mathbf{n}'\). The
other order relations remain unchanged; hence
\begin{equation}
	(\rho+1, v) = B_{k+h}^{(d)}(\mathbf{n}'), \qquad (\rho, v) = B_{k+1+h}^{(d)}(\mathbf{n}').
\end{equation} 
If now \(n_{i} > v\), then \(h=0\) and \(\mathbf{n}' \in {}_{1}\mathcal{W}(d,k)\) with smaller distance to \(\mathbf{0}\), but the quantities \(\rho\) and \(v\) unchanged. Applying this suitably often with \(i = \rho-1, \dots, 2,1\), we arrive at some
\(\mathbf{n} \in {}_{1}\mathcal{W}(d,k)\) with
\begin{equation} \label{Eq.Vertex-in-1Wdk-with-n1-leq-v}
	n_{1} \leq v,
\end{equation}
which we from now an assume. In particular, all the \( (j,v) \) with \(1 \leq j < \rho\) are boxes of \(\mathbf{n}\) for such \(\mathbf{n}\). This implies: If \(\mathbf{n}' \defeq \mathbf{n} - \mathbf{n}_{i}\) for some \(i < \rho\) with
\(n_{i} > n_{i+1}\), then the \(h\) of \eqref{Eq.Notation-for-special-vertex-in-1Wdk} equals \(i\), and so
\begin{equation}
	B_{k+i}^{(d)}(\mathbf{n}') = B_{k}^{(d)}(\mathbf{n}), \qquad B_{k+1+i}^{(d)}(\mathbf{n}') = B_{k+1}^{(d)}(\mathbf{n}).
\end{equation}
Suppose that \fbox{\(i < r- \rho\)}. Then \(\rho +1+i \leq r\), so
\[
	B_{k}^{(d)}(\mathbf{n}') = (\rho+1+i, v), \qquad B_{k+1}^{(d)}(\mathbf{n}') = (\rho + i, v),
\]
and \(\mathbf{n}'\) still belongs to \({}_{1}\mathcal{W}(d,k)\). 

If \fbox{\(i=r-\rho\)}, \(B_{k+1}^{(d)}(\mathbf{n}') = (r,v)\) but \(B_{k}^{(d)}(\mathbf{n}')\) is of shape \( (*, v-1) \), hence \(\mathbf{n}' \notin \mathcal{W}(d,k)\).

Suppose that \fbox{\(r- \rho < i < \rho\)} (which of course is possible only for \(r < 2\rho\)). Then \(\rho + i > r\), so both \(B_{k}^{(d)}(\mathbf{n}')\) and \(B_{k+1}^{(d)}(\mathbf{n}')\) have value \(v-1\), and 
\(\mathbf{n}' \in {}_{1}\mathcal{W}(d,k)\).

\subsection{} \label{Subsection.Second-special-vertex} Let \(\mathbf{n} \in {}_{1}\mathcal{W}(d,k)\) be as in \ref{Subsection.Special-vertex-in-1Wdk} and subject to \eqref{Eq.Vertex-in-1Wdk-with-n1-leq-v}. Suppose 
there exists some \(i\) with \(\rho < i < r\) and \(n_{i} > 0\), and that \(i\) is maximal with this property. For \(\mathbf{n}' \defeq \mathbf{n} - \mathbf{n}_{i}\) and \(h \defeq r-i\),
\begin{equation}
	B_{k-h}^{(d)}(\mathbf{n}') = (\rho+1, v-1), \qquad B_{j+1-h}^{(d)}(\mathbf{n}') = (\rho, v-1),
\end{equation}
since precisely the \(h\) boxes \( (j,v) \) with \( i < j \leq r\) are less than \( (\rho+1, v) = B_{k+1}^{(d)}(\mathbf{n})\) as boxes of \(\mathbf{n}\), but not less than \( (\rho, v-1) \) regarded as \(d\)-boxes of \(\mathbf{n}'\) (and the other
order relations unchanged).

If now \fbox{\(i > r-\rho\)}, i.e., \(h < \rho\), then \(B_{k}^{(d)}(\mathbf{n}') = (\rho+1-h, v-1)\) and \(B_{k+1}^{(d)}(\mathbf{n}') = (\rho-h, v-1)\), and \(\mathbf{n}' \in {}_{1}\mathcal{W}(d,k)\). If \fbox{\(i = r-\rho\)}, i.e., \(h = \rho\),
then \(\mathbf{n}' \notin \mathcal{W}(d,k)\), while for \fbox{\(i<r-\rho\)}, i.e., \(h > \rho\), \(B_{k}^{(d)}(\mathbf{n}')\) and \(B_{k+1}^{(d)}(\mathbf{n}')\) are of shape \( (*, v)\) and so \(\mathbf{n}' \in {}_{1}\mathcal{W}(d,k)\).

\subsection{} \label{Subsection.Third-special-vertex} Suppose \(\mathbf{n} \in {}_{1}\mathcal{W}(d,k)\) is such that a lowering of \(d(\mathbf{n}, \mathbf{0}) = n_{1} \leq v\) inside \({}_{1}\mathcal{W}(d,k)\) through 
\(\mathbf{n} \leadsto \mathbf{n} - \mathbf{n}_{i}\) is not possible by means of the devices in \ref{Subsection.Special-vertex-in-1Wdk} and \ref{Subsection.Second-special-vertex}. Then with \(B_{k+1}^{(d)}(\mathbf{n}) = (\rho, v)\), the following hold:
\subsubsection{} \begin{itemize}
	\item \(n_{1} \leq v\);
	\item for \(i < \min(\rho, r-\rho)\), \(n_{i} = n_{i+1}\);
	\item for \(r-\rho < i < \rho\), \(n_{i} = n_{i+1}\);
	\item for \(i > \max(\rho, r-\rho)\), \(n_{i} = n_{i+1}\);
	\item for \(\rho < i < r-\rho\), \(n_{i} = n_{i+1}\).
\end{itemize}
(Here the first three conditions come from \ref{Subsection.Special-vertex-in-1Wdk}, the last two from \ref{Subsection.Second-special-vertex}.) That is, if \(n_{i} > n_{i+1}\) then \(i \in \{ \rho, r-\rho\}\).
\subsection{} \label{Subsection.Fourth-special-vertex} Consider the case \fbox{\(n_{\rho} > n_{\rho+1}\)}, and assume \fbox{\( \rho < r- \rho\)}. Let \(\mathbf{n}' \defeq \mathbf{n} - \mathbf{n}_{\rho}\), \(h \defeq r-\rho\). 
With arguments as before we find:
\begin{equation}
	B_{k+1-h}^{(d)}(\mathbf{n}') = (\rho, v-1),
\end{equation}
so \(B_{k+\rho-h}^{(d)}(\mathbf{n}') = (1,v-1)\) and \(B_{k+\rho+1-h}^{(d)}(\mathbf{n}') = (r,v)\). As by our assumption \(\rho+1-h \leq 0\), we see that both \(B_{k}^{(d)}(\mathbf{n}')\) and \(B_{k+1}^{(d)}(\mathbf{n}')\) lie on
the column of \(\diag^{(d)}(\mathbf{n}')\) with value \(v\). That is, \(\mathbf{n}' \in {}_{1}\mathcal{W}(d,k)\). Similarly, if \fbox{\(r-\rho < \rho\)}, \(\mathbf{n}' \defeq \mathbf{n} - \mathbf{n}_{\rho}\) is seen to lie in 
\({}_{1}\mathcal{W}(d,k)\), where both \(B_{k}^{(d)}(\mathbf{n}')\) and \(B_{k+1}^{(d)}(\mathbf{n}')\) have value \(v-1\).

\subsection{} Suppose that our \(\mathbf{n}\) of \ref{Subsection.Third-special-vertex} is such that \(d(\mathbf{n}, \mathbf{0})\) cannot be lowered in \({}_{1}\mathcal{W}(d,k)\) through applying \ref{Subsection.Special-vertex-in-1Wdk}, \ref{Subsection.Second-special-vertex}, or \ref{Subsection.Fourth-special-vertex}. Then also \(n_{\rho} = n_{\rho+1}\), that is
\begin{equation}
	\mathbf{n} = t \mathbf{n}_{r-\rho} \text{ with some } t \in \mathds{N}_{0}, \qquad t \leq v < d.
\end{equation}
If \(t \leq 1\), we are ready (see \ref{Subsection.Connectedness-via-edgepath}); so let \(t > 1\). The \(d\)-diagram \(\diag^{(d)}(\mathbf{n})\) looks:
\begin{center}
	\begin{tikzpicture}[scale=0.65]
		\draw[pattern=north east lines, pattern color=gray] (0,3) rectangle (3,11);
		\draw[pattern=north east lines, pattern color=gray] (10,0) rectangle (13,3);
		% Horizontal lines
		\draw (0,0) -- (13,0);
		\draw (0,3) -- (13,3);
		\draw (0,4) -- (13,4);
		\draw (0,6) -- (13,6);
		\draw (0,7) -- (13,7);
		\draw (0,8) -- (13,8);
		\draw (0,11) -- (13,11);
		% Vertical lines
		\draw (0,0) -- (0,11);
		\draw (3,0) -- (3,11);
		\draw (4,0) -- (4,11);
		\draw (6,0) -- (6,11);
		\draw (7,0) -- (7,11);
		\draw (9,0) -- (9,11);
		\draw (10,0) -- (10,11);
		\draw (12,0) -- (12,11);
		\draw (13,0) -- (13,11);
		% Nodes
		% Row labels
		\node (r1) at (-0.5, 10.5) {1};
		\node (rrho) at (-0.5, 7.5) {\(\rho\)};
		\node (rrho+1) at (-0.75,6.5) {\(\rho+1\)};
		\node (rr-rho) at (-0.75,3.5) {\(r- \rho\)};
		\node (rr) at (-0.5,0.5) {\(r\)};
		% Column labels
		\node (c0) at (0.5,-0.5) {\(0\)};
		\node (ct) at (3.5,-0.5) {\(t\)};
		\node (cv) at (6.5,-0.5) {\(v\)};
		\node (cd-1) at (9.5,-0.5) {\(d-1\)};
		\node (ct+d-1) at (13.5,-0.5) {\(t+d-1\)};
		% Entries
		\node[scale=0.8] (k) at (6.5,6.5) {\(k\)};
		\node[scale=0.8] (k+1) at (6.5,7.5) {\(k+1\)};
		\node (text) at (10.5,-1.5) {\(\text{if } \rho < r-\rho \quad (\text{possibly } \rho+1=r-\rho)\)};
	\end{tikzpicture}
\end{center}
\begin{center}
	\begin{tikzpicture}[scale=0.65]
		\draw[pattern=north east lines, pattern color=gray] (0,4) rectangle (3,8);
		\draw[pattern=north east lines, pattern color=gray] (10,0) rectangle (13,4);
		% Horizontal lines
		\draw (0,0) -- (13,0);
		\draw (0,3) -- (13,3);
		\draw (0,4) -- (13,4);
		\draw (0,5) -- (13,5);
		\draw (0,8) -- (13,8);
		% Vertical lines
		\draw (0,0) -- (0,8);
		\draw (3,0) -- (3,8);
		\draw (4,0) -- (4,8);
		\draw (6,0) -- (6,8);
		\draw (7,0) -- (7,8);
		\draw (9,0) -- (9,8);
		\draw (10,0) -- (10,8);
		\draw (12,0) -- (12,8);
		\draw (13,0) -- (13,8);
		% Nodes
		% Row labels
		\node (r1) at (-0.5,7.5) {1};
		\node (rr-rho) at (-1.25,4.5) {\(\rho = r-\rho\)};
		\node (rrho+1) at (-0.75,3.5) {\(\rho+1\)};
		\node (rr) at (-0.5,0.5) {\(r\)};
		% Column labels
		\node (c0) at (0.5,-0.5) {\(0\)};
		\node (ct) at (3.5,-0.5) {\(t\)};
		\node (cv) at (6.5,-0.5) {\(v\)};
		\node (cd-1) at (9.5,-0.5) {\(d-1\)};
		\node (ct+d-1) at (13.5,-0.5) {\(t+d-1\)};
		\node (label) at (13.5,-1.5) {\(\text{if } \rho = r-\rho,\)};
		%Entries
		\node[scale=0.8] (k) at (6.5,3.5) {\(k\)};
		\node[scale=0.8] (k+1) at (6.5,4.5) {\(k+1\)};
	\end{tikzpicture}
\end{center}
\begin{center}
	\begin{tikzpicture}[scale=0.65]
		\draw[pattern=north east lines, pattern color=gray] (0,6) rectangle (3,10);
		\draw[pattern=north east lines, pattern color=gray] (10,0) rectangle (13,6);
		% Horizontal lines
		\draw (0,0) -- (13,0);
		\draw (0,1) -- (13,1);
		\draw (0,2) -- (13,2);
		\draw (0,3) -- (13,3);
		\draw (0,6) -- (13,6);
		\draw (0,7) -- (13,7);
		\draw (0,10) -- (13,10);
		% Vertical lines 
		\draw (0,0) -- (0,10);
		\draw (3,0) -- (3,10);
		\draw (4,0) -- (4,10);
		\draw (6,0) -- (6,10);
		\draw (7,0) -- (7,10);
		\draw (9,0) -- (9,10);
		\draw (10,0) -- (10,10);
		\draw (12,0) -- (12,10);
		\draw (13,0) -- (13,10);
		% Nodes
		% Row labels
		\node (r1) at (-0.5,9.5) {1};
		\node (rr-rho) at (-0.75,6.5) {\(r-\rho\)};
		\node (rrho) at (-0.5,2.5) {\(\rho\)};
		\node (rrho+1) at (-0.75,1.5) {\(\rho+1\)};
		% Column labels
		\node (c0) at (0.5,-0.5) {0};
		\node (ct) at (3.5,-0.5) {\(t\)};
		\node (cv) at (6.5,-0.5) {\(v\)};
		\node (cd-1) at (9.5,-0.5) {\(d-1\)};
		\node (ct+d-1) at (13.5,-0.5) {\(t+d-1\)};
		\node (label) at (13.5,-1.5) {\(\text{if } \rho > r-\rho\)};
		% Entries
		\node[scale=0.8] (k) at (6.5,1.5) {\(k\)};
		\node[scale=0.8] (k+1) at (6.5,2.5) {\(k+1\)};
	\end{tikzpicture}
\end{center}
In each case, some of the quantities \(t \leq v \leq d-1\) may coincide, and the shadowed areas correspond to non-boxes. Now replace \(\mathbf{n} = (\underbrace{t,\dots,t}_{r-\rho}, \underbrace{0,\dots,0}_{\rho})\) by
\begin{align}
	\mathbf{n}'	&\defeq (\underbrace{t,\dots,t,t-1,}_{r-\rho} \underbrace{0,\dots,0}_{\rho}) 	&&\text{if } \rho \leq r-\rho, \quad \text{and by} \\
	\mathbf{n}'	&\defeq (\underbrace{t,\dots,t,}_{r-\rho} \underbrace{1,0,\dots,0}_{\rho})	 	&&\text{if } \rho > r-\rho. \nonumber
\end{align}
Then upon \(\mathbf{n} \leadsto \mathbf{n}'\), the \(k\)-th and the \((k+1)\)-th \(d\)-box move down one step if \(\rho \leq r-\rho\) and up one step if \(\rho > r - \rho\), but in both cases, 
\(v(B_{k}^{(d)}(\mathbf{n}')) = v(B_{k+1}^{(d)}(\mathbf{n}')) = v\), and \(\mathbf{n}'\) is a neighbor of \(\mathbf{n}\) in \({}_{1}\mathcal{W}(d,k)\). As \(d(\mathbf{n}', \mathbf{0}) = d(\mathbf{n}, \mathbf{0}) = t\) didn't increase, 
we may resume with devices \ref{Subsection.Special-vertex-in-1Wdk}, \ref{Subsection.Second-special-vertex}, \ref{Subsection.Fourth-special-vertex}, by which we finally arrive at some \(\mathbf{n}_{i}\) with \(1 \leq i < r\). Thus we have shown:

\begin{Proposition}
	\(\mathcal{W}(d,k)\) is connected as a simplicial complex.
\end{Proposition}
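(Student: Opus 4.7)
The strategy will be to construct, for each vertex $\mathbf{n} \in \mathcal{W}(d,k)$, an edge path inside $\mathcal{W}(d,k)$ joining $\mathbf{n}$ to one of the fundamental dominant weights $\mathbf{n}_{i}$. By \ref{Subsubsection.Membership-of-Wdk} and \eqref{Eq.Characterization-of-simplices-in-Wdk}, the $r-1$ vertices $\mathbf{n}_{i}$ with $k+i \not\equiv 0 \pmod{r}$ all lie on a common $(r-2)$-face of the standard $(r-1)$-simplex inside $\mathcal{W}(d,k)$; since this face is itself connected, it is enough to reach some such $\mathbf{n}_{i}$.

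The plan then splits along the decomposition $\mathcal{W}(d,k)(\mathds{Z}) = {}_{1}\mathcal{W}(d,k) \cup {}_{5}\mathcal{W}(d,k)$ from \ref{Subsection.Decomposition-of-WdkIZ}. For $\mathbf{n} \in {}_{5}\mathcal{W}(d,k)$ I would first descend along $-\mathbf{y}$: by the $\mathbf{y}$-stability recorded in \ref{Subsection.Consequences-of-lemma-to-3Wdk}, the successive translates $\mathbf{n}, \mathbf{n} - \mathbf{y}, \dots$ form an edge path in $\mathcal{W}(d,k)$ which terminates in ${}_{3}\mathcal{W}(d,k) \cupdot {}_{4}\mathcal{W}(d,k)$. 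From a landing point in ${}_{4}\mathcal{W}(d,k)$, the sequence $\mathbf{n}^{(0)}, \mathbf{n}^{(1)}, \dots, \mathbf{n}^{(j)}$ of \ref{Subsection.Maximal-vertex-in-Wdk} gives a further edge path inside $\mathcal{W}(d,k)$ ending in ${}_{3}\mathcal{W}(d,k) \subset {}_{1}\mathcal{W}(d,k)$. In all cases we are reduced to connecting vertices of ${}_{1}\mathcal{W}(d,k)$ to the $\mathbf{n}_{i}$.

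For $\mathbf{n} \in {}_{1}\mathcal{W}(d,k)$ I would induct on the combinatorial distance $d(\mathbf{n}, \mathbf{0}) = n_{1}$. With $\rho \defeq \rho_{k}^{(d)}(\mathbf{n})$ and $v \defeq v_{k}^{(d)}(\mathbf{n}) < d$, the aim is to produce a neighbor $\mathbf{n}' = \mathbf{n} - \mathbf{n}_{i} \in {}_{1}\mathcal{W}(d,k)$ with $n'_{1} < n_{1}$, unless $\mathbf{n}$ is already some $\mathbf{n}_{i}$. The constructions of \ref{Subsection.Special-vertex-in-1Wdk} (subtracting $\mathbf{n}_{i}$ for an index $i < \rho$ with $n_{i} > n_{i+1}$) and \ref{Subsection.Second-special-vertex} (subtracting $\mathbf{n}_{i}$ for the largest $i > \rho$ with $n_{i} > 0$) supply such a lowering step outside the forbidden index $i = r-\rho$; each is verified by reading off the effect on $B_{k}^{(d)}(\mathbf{n})$ and $B_{k+1}^{(d)}(\mathbf{n})$ directly from $\diag^{(d)}(\mathbf{n})$.

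The main obstacle is the residual case identified in \ref{Subsection.Third-special-vertex}: when neither of the above moves applies, the constraints force $\mathbf{n} = t\mathbf{n}_{r-\rho}$ with $0 \le t \le v < d$, and for $t > 1$ no subtraction of a single $\mathbf{n}_{i}$ both stays in ${}_{1}\mathcal{W}(d,k)$ and lowers $n_{1}$. The way out is the lateral move of \ref{Subsection.Fourth-special-vertex}: replace $\mathbf{n}$ by a neighbor $\mathbf{n}'$ of the same distance but with one coordinate broken by $1$, chosen according to whether $\rho < r-\rho$, $\rho = r-\rho$, or $\rho > r-\rho$; inspection of $\diag^{(d)}(\mathbf{n}')$ shows the $k$-th and $(k+1)$-th $d$-boxes remain in a common column, so $\mathbf{n}' \in {}_{1}\mathcal{W}(d,k)$, while the symmetry that blocked \ref{Subsection.Special-vertex-in-1Wdk} and \ref{Subsection.Second-special-vertex} has been broken. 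The induction can then resume and terminate at some $\mathbf{n}_{i}$. The delicate case split in this last step, and in particular the case-by-case tracking of where $\rho$ sits relative to $r/2$, is the genuine technical heart of the argument.
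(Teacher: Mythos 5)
Your proposal is correct and follows essentially the same route as the paper: reduce to reaching some \(\mathbf{n}_{i}\), use the decomposition of \ref{Subsection.Decomposition-of-WdkIZ} to get into \({}_{1}\mathcal{W}(d,k)\), then lower \(d(\mathbf{n},\mathbf{0})\) by the devices of \ref{Subsection.Special-vertex-in-1Wdk}, \ref{Subsection.Second-special-vertex}, \ref{Subsection.Fourth-special-vertex}, with the lateral move resolving the exceptional vertices \(t\mathbf{n}_{r-\rho}\). The only (immaterial) variation is that you descend along \(-\mathbf{y}\) before invoking the sequence of \ref{Subsection.Maximal-vertex-in-Wdk}, whereas the paper runs that sequence directly from \({}_{2}\mathcal{W}(d,k)\); both paths land in \({}_{3}\mathcal{W}(d,k)\).
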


We may now state and prove the first main result.

\begin{Theorem} \label{Theorem.Simplicial-complexes-connected}
	The simplicial complexes \(\mathcal{W}(d,k)\), \(\mathcal{A}(d,k)\), and \(\mathcal{BT}(d,k)\) are connected.
\end{Theorem}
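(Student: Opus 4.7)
The preceding Proposition has just established connectedness of $\mathcal{W}(d,k)$. The plan is to bootstrap this in two steps: first to $\mathcal{A}(d,k)$ via the Weyl group $W$-action, and then to $\mathcal{BT}(d,k)$ via the $\Gamma$-action. The key ingredients are the evident decompositions
\[
 \mathcal{A}(d,k) = \bigcup_{w \in W} w \mathcal{W}(d,k), \qquad \mathcal{BT}(d,k) = \bigcup_{\gamma \in \Gamma} \gamma \mathcal{W}(d,k),
\]
whose summands are simplicial isomorphs of the connected subcomplex $\mathcal{W}(d,k)$; it therefore remains only to verify that these translates are glued together into a single connected whole.

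For $\mathcal{A}(d,k)$, I would induct on the length of $w \in W$ as a word in the simple reflections $s_1, \dots, s_{r-1}$. The induction step amounts to showing, for each $j \in \{1, \dots, r-1\}$, that $\mathcal{W}(d,k) \cap s_j\mathcal{W}(d,k) \neq \varnothing$, which reduces to exhibiting a vertex of $\mathcal{W}(d,k)$ on the wall $\mathcal{W}_j$ (since such a vertex is fixed by $s_j$). Now the fundamental dominant weight $\mathbf{n}_i$ lies on $\mathcal{W}_j$ exactly when $i \neq j$, and by \ref{Subsubsection.Membership-of-Wdk} it belongs to $\mathcal{W}(d,k)$ exactly when $i + k \not\equiv 0 \pmod{r}$. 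Since $r \geq 3$, the latter condition is satisfied by $r - 1 \geq 2$ indices $i \in \{0, \dots, r-1\}$, so for any given $j$ one of them is $\neq j$, furnishing the required bridging vertex.

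For $\mathcal{BT}(d,k)$ I would follow the same strategy, but this is where I foresee the main obstacle: $\Gamma$ is infinite and lacks a clean reflection presentation analogous to that of $W$. Given $v \in \mathcal{BT}(d,k)$, write $v = \gamma\mathbf{m}$ with $\gamma \in \Gamma$ and $\mathbf{m} \in \mathcal{W}(d,k)$; transporting the $\mathcal{W}(d,k)$-edge path from $\mathbf{m}$ to some $\mathbf{n}_i$ via $\gamma$ yields an edge path in $\gamma\mathcal{W}(d,k) \subseteq \mathcal{BT}(d,k)$ from $v$ to $\gamma\mathbf{n}_i$. Connectedness therefore reduces to joining each $\gamma\mathbf{n}_i$ back to $\mathbf{n}_i$ inside $\mathcal{BT}(d,k)$. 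To settle this, I would decompose $\gamma$ into products of standard generators of $\Gamma$ (permutation matrices, diagonal units, elementary matrices $E_{ij}(a)$) and treat the generators one at a time: for those in $W$ the argument of the preceding paragraph applies verbatim, while for the remaining generators the codimension-$1$ simplex $\sigma^{(j_0)}$ of \eqref{Eq.Characterization-of-simplices-in-Wdk}—which lies in $\mathcal{W}(d,k)$ and whose link in $\mathcal{BT}$ is rich enough to reach many neighbouring apartments—provides the needed bridging face between $\mathcal{A}(d,k)$ and its $\Gamma$-translates, reducing the problem to a finite case analysis.
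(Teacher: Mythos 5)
Your treatment of $\mathcal{A}(d,k)$ is correct and is essentially the paper's own argument: the paper simply asserts that the sheets $w\mathcal{W}(d,k)$ are glued along the vertices $w\mathbf{n}_{i}$, and your wall-by-wall induction on the word length of $w$, using that for each $j$ some $\mathbf{n}_{i}$ with $i\neq j$ and $i+k\not\equiv 0 \pmod{r}$ lies on $\mathcal{W}_{j}$ (possible since $r\geq 3$), fills in exactly that point.

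The passage to $\mathcal{BT}(d,k)$, however, contains a genuine gap. Reducing to joining $\gamma\mathbf{n}_{i}$ back to $\mathbf{n}_{i}$ and treating generators of $\Gamma$ one at a time is the right frame, but your proposed bridge for the elementary generators $E_{ij}(a)$ does not work. A translate $\gamma\mathcal{A}(d,k)$ is linked to $\mathcal{A}(d,k)$ inside $\mathcal{BT}(d,k)$ when $\gamma$ lies in (a product of) stabilizers $\Gamma_{\mathbf{n}}$ of vertices $\mathbf{n}\in\mathcal{W}(d,k)$; but by the block description \eqref{Eq.Block-structure-of-matrices-in-Gamma-n}, $\Gamma_{\mathbf{n}}$ contains $E_{ij}(a)$ only for $\deg a\leq n_{i}-n_{j}$. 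The face $\sigma^{(j_{0})}$ of the standard simplex sits at combinatorial distance at most $1$ from the origin, so its stabilizer (and the stabilizer of any fixed simplex near the origin) yields only elementary matrices with entries of degree at most $1$; no single bridging face, however rich its link, can connect $\mathcal{A}(d,k)$ to $E_{ij}(a)\mathcal{A}(d,k)$ for $\deg a$ large, and the case analysis is not finite, since $a$ ranges over all of $A$. The missing ingredient is the existence of an infinite half-line $\mathbf{n},\,\mathbf{n}+\mathbf{n}_{i},\,\mathbf{n}+2\mathbf{n}_{i},\dots$ inside $\mathcal{W}(d,k)$: for $k\leq d$ this follows from $\mathcal{W}(d,k)=\mathcal{W}(k)$ and Corollary \ref{Corollary.Membership-of-AIZ}(i), and for $k>d$ from the decomposition \ref{Subsection.Decomposition-of-WdkIZ}, where each $\mathbf{n}\in{}_{3}\mathcal{W}(d,k)\cup{}_{4}\mathcal{W}(d,k)$ gives the ray $\mathbf{n}+\mathds{N}_{0}\mathbf{y}$. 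The union of the vertex stabilizers along such a half-line contains a full unipotent block subgroup $U_{i}$ with arbitrary entries in $A$, and $U_{i}$ together with $W$ and $\GL(r,\mathds{F})$ generates all root groups, hence $\SL(r,A)$ (as $A$ is a principal ideal domain) and finally $\Gamma=\GL(r,A)$. Without producing those arbitrarily deep vertices of $\mathcal{W}(d,k)$, your reduction to generators cannot be completed.
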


\begin{proof}
	\begin{enumerate}[wide, label=(\roman*)]
		\item For \(\mathcal{W}(d,k)\), see above. We have
		\[
			\mathcal{A}(d,k) = W \mathcal{W}(d,k) = \bigcup_{w \in W} w\mathcal{W}(d,k)
		\]
		with the Weyl group \(W\), and the different sheets \(w\mathcal{W}(d,k)\) are connected along the vertices \(w\mathbf{n}_{i}\); so \(\mathcal{A}(d,k)\) is connected. 
		\item Recall that \(\mathcal{BT}(d,k) = \Gamma \mathcal{W}(d,k)\). Let \enquote{\(\sim\)} be the equivalence relation on \(\Gamma = \GL(r,A)\)
		\begin{multline}
			\gamma \sim \delta \vcentcolon \Longleftrightarrow \gamma \mathcal{A}(d,k) \text{ lies in the same connected } \\ \text{component of \(\mathcal{BT}(d,k)\) as \(\delta\mathcal{A}(d,k)\) (\(\gamma, \delta \in \Gamma\))}.
		\end{multline}
		Then \(\Delta \defeq \{ \gamma \in \Gamma \mid \gamma \sim 1\}\) is a subgroup of \(\Gamma\). It contains \(\Gamma_{\mathbf{n}} \defeq \{ \gamma \in \Gamma \mid \gamma \mathbf{n} = \mathbf{n}\}\) for each
		\(\mathbf{n} \in \mathcal{W}(d,k)\) as well as the subgroup \(W\) of permutation matrices of \(\Gamma\). Thus we have to show that \(\Gamma\) equals the group \(\Delta\) generated by all these.
		\item Given \(\mathbf{n} = (n_{1}, \dots, n_{r-1}, 0) \in \mathcal{W}(\mathds{Z})\), write the set \( \{n_{i} \mid 1 \leq i \leq r\}\) as \( \{m_{1}, \dots, m_{s}\}\) with \(m_{1} > m_{2} > \dots > m_{s} = 0\), where \(m_{j}\) occurs \(r_{j}\) times
		in \(n_{1}, \dots, n_{r}\), and \(\sum_{1 \leq j \leq s} r_{j} = r\). Then (as is well-known and easily checked):
		\begin{equation} \label{Eq.Block-structure-of-matrices-in-Gamma-n}
			\Gamma_{\mathbf{n}} \text{ is the group of matrices with a block structure}
		\end{equation}
		\begin{center}
			\begin{tikzpicture}[scale=0.65]
				% Outer matrix
				\draw (0,0) rectangle (8,8);
				% Blocks
				\draw (0,6) rectangle (2,8);
				\draw (2,6) rectangle (3,8);
				\draw (2,5) rectangle (3,6);
				\draw (6,6) rectangle (8,8);
				\draw (6,2) rectangle (8,3);
				\draw (6,0) rectangle (8,2);
				\draw[dotted] (3,6) -- (6,6);
				\draw[dotted] (6,6) -- (6,3);
				% Nodes
				\node (A1) at (1,7) {\(A_{1}\)};
				\node (B12) at (2.5,7) {\(B_{1,2}\)};
				\node (A2) at (2.5,5.5) {\(A_{2}\)};
				\node (B1s) at (7,7) {\(B_{1,s}\)};
				\node (Bs-1s) at (7, 2.5) {\(B_{s-1,s}\)};
				\node (As) at (7,1) {\(A_{s}\)};
				\node (dots) at (4.5,4.5) {\(\ddots\)};
				\node (0) at (1,1) {0};
			\end{tikzpicture}
		\end{center}
		where \(A_{j} \in \GL(r_{j}, \mathds{F})\), for \( 1 \leq i < j \leq s\), \(B_{i,j}\) is an \( (r_{i} \times r_{j})\)-matrix with entries \(a \in A = \mathds{F}[T]\) of degree \(\deg a \leq m_{i} - m_{j}\), and the blocks below the
		diagonal vanish.
		\item Suppose \fbox{\(k \leq d\)}. Then \(\mathcal{W}(d,k) = \mathcal{W}(k)\), which contains some infinite half-line \(\mathbf{n}, \mathbf{n} + \mathbf{n}_{1}, \mathbf{n} + 2\mathbf{n}_{1}, \dots,\) as can be seen from 
		\(\mathcal{W}(1) = \mathcal{W}_{r-1}\) and Corollary \ref{Corollary.Membership-of-AIZ}(i). If \fbox{\(k>d\)} then by the basic decomposition \ref{Subsection.Decomposition-of-WdkIZ}, each \(\mathbf{n} \in {}_{3}\mathcal{W}(d,k) \cup {}_{4}\mathcal{W}(d,k)\) gives rise to an infinite sequence 
		\(\mathbf{n}\), \(\mathbf{n} + \mathbf{y}\), \(\mathbf{n} + 2\mathbf{y}\), \dots (\( \mathbf{y} = \mathbf{n}_{r-1}\)). In any case, there exists an infinite half-line in \(\mathcal{W}(d,k)\): 
		\begin{equation}
			\mathbf{n} = \mathbf{n}^{(0)}, \quad \mathbf{n}^{(1)} = \mathbf{n} + \mathbf{n}_{i}, \quad \mathbf{n}^{(2)} = \mathbf{n} + 2\mathbf{n}_{i}, \dots
		\end{equation}
		with some \(i\), \(1\leq i < r\). As follows from \eqref{Eq.Block-structure-of-matrices-in-Gamma-n}, the union \(\bigcup_{j \geq 0} \Gamma_{\mathbf{n}^{(j)}}\) contains the subgroup \(U_{i}\) of \(\Gamma\) of matrices with an \(i \times (r-i)\) block structure 
		\begin{center}
			\begin{tikzpicture}[scale=0.5]
				% Outer matrix
				\draw (0,0) rectangle (3,3);
				% Blocks
				\draw (1,0) -- (1,3);
				\draw (0,2) -- (3,2);
				% Nodes
				\node (0) at (0.5, 1) {0};
				\node (idi) at (0.5,2.5) {\(\mathrm{Id}_{i}\)};
				\node (idr-i) at (2,1) {\(\mathrm{Id}_{r-i}\)};
				\node (star) at (2,2.5) {*};
			\end{tikzpicture},
		\end{center} 
		where the entries of the \(i \times (r-i)\)-matrix \(*\) are arbitrary elements of \(A\).
		\item By \ref{Subsubsection.Membership-of-Wdk}, each \(\mathbf{n}_{j}\) (\(0 \leq j < r\)) with \(j+k \not\equiv 0 \pmod{r}\) belongs to \(\mathcal{W}(d,k)\). This fact, together with \eqref{Eq.Block-structure-of-matrices-in-Gamma-n} and \(W \subset \Delta\), implies that in fact \(\GL(r, \mathds{F}) \subset \Delta\).
		Hence it suffices to show that \(\GL(r, \mathds{F}))\) and \(U_{i}\) together generate \(\Gamma\). This is an exercise in matrix groups.
		\item First, \(\Delta\) contains the root group \(B_{i,i+1}\) of matrices with 1 on the diagonal, an arbitrary element of \(A\) at the \( (i,i+1)\)-entry, and all other coefficients 0. By the action of \(W\), it also contains the
		other root groups \(B_{\ell, \ell+1}\) (\(1 \leq \ell < r\)), hence the group of strictly upper triangular matrices with coefficients in \(A\), as well as its transpose. It is well-known that (since \(A\) is a principal ideal domain) these
		together generate \(\SL(r,A)\), and then together with \(\GL(r, \mathds{F})\), the full group \(\Gamma = \GL(r,A)\). Hence the connected component of \(\mathcal{A}(d,k)\) in \(\mathcal{BT}(d,k)\) equals
		\(\mathcal{BT}(d,k)\), and we are done.
	\end{enumerate}
\end{proof}

\section{Strong equidimensionality} \label{Section.Strong-equidimensionality}

In [V] we established equidimensionality of \(\mathcal{BT}(k)\) and its subcomplexes \(\mathcal{A}(k)\) and \(\mathcal{W}(k)\); that is, each vertex is contained in a simplex of dimension \(\dim \mathcal{BT}(k) = r-2\). Here we show,
both for \(\mathcal{BT}(k)\) and \(\mathcal{BT}(d,k)\), the stronger statement \ref{Subsection.Simpliciality-of-modular-form} (c strong) that each simplex is contained in a simplex of dimension \(r-2\). Note that by the very statement, and since each simplex
of \(\mathcal{BT}\) is \(\Gamma\)-conjugate to one in \(\mathcal{W}\), it suffices to show strong equidimensionality for \(\mathcal{W}(k)\) and \(\mathcal{W}(d,k)\), respectively.

We keep our assumptions on the numbers \(r\), \(d\) and \(k\): We always have \(r \geq 3\), \(d \geq 2\), and \(2 \leq k < rd\).

\subsection{} As the quantities \(v_{\ell}^{(d)}(\mathbf{m})\) and \(B_{\ell}^{(d)}(\mathbf{m})\) are defined only for \( \ell \leq rd\), we assume in all formulas below involving these that \(\ell \leq rd\). We call
\(j \in \{1,2,\dots,r-1\}\) \textbf{admissible} for \(\mathbf{m} = (m_{1}, \dots, m_{r-1}, 0) \in \mathcal{W}(\mathds{Z})\) if
\begin{equation}
	j=1 \quad \text{or} \quad m_{j} < m_{j-1}.
\end{equation}
This is equivalent with \(\mathbf{m} + \mathbf{e}_{j} \in \mathcal{W}\). A vertex \(\mathbf{m} \in \mathcal{W}(d,k)\) is \textbf{capped} with respect to \(k\) or \(k\)-\textbf{capped} if 
\begin{equation}
	\rho \defeq \rho_{k}^{(d)}(\mathbf{m}) = 1 \quad \text{or} \quad m_{\rho -1} > v(B_{k}^{(d)}(\mathbf{m})).
\end{equation}
This means that in the column of \(B_{k}^{(d)}(\mathbf{m})\) and \(B_{k+1}^{(d)}(\mathbf{m})\) in \(\diag^{(d)}(\mathbf{m})\) there are no boxes above \(B_{k+1}^{(d)}(\mathbf{m})\) or, equivalently, \(\mathbf{m} \notin \mathcal{W}(d,k+1)\).

In the following lemmas, we mainly argue with \(d\)-diagrams of various vertices. Usually we draw only the relevant parts of these and informally call them \enquote{pictures}. A star \(*\) in an entry of the corresponding
matrix indicates a box of \(\diag^{(d)}(\mathbf{m})\), briefly a box of \(\mathbf{m}\). If the entry is shadowed \begin{tikzpicture}[scale=0.3] \draw[pattern=north east lines, pattern color=gray] (0,0) rectangle (1,1);\end{tikzpicture}, it is a
\textbf{non-box} (=not a box). Entries neither starred nor shadowed are irrelevant for the argument.

\begin{Lemma} \label{Lemma.Boxes-of-sums-of-certain-vertices}
	Let \(\mathbf{m}\) be a vertex of \(\mathcal{W}(d,k)\) and \(j\) admissible for \(\mathbf{m}\). Put \(\mathbf{m}' \defeq \mathbf{m} + \mathbf{e}_{j} \in \mathcal{W}\), \(v \defeq v_{k}^{(d)}(\mathbf{m})\), 
	\(\rho \defeq \rho_{k}^{(d)}(\mathbf{m})\), \(v' \defeq v_{k}^{(d)}(\mathbf{m}')\), and, provided that \(\mathbf{m}' \in \mathcal{W}(d,k)\), \(\rho' \defeq \rho_{k}^{(d)}(\mathbf{m}')\).
	\begin{enumerate}[label=\(\mathrm{(\roman*)}\)]
		\item Let \(j \neq \rho\) and \(m_{j} \leq v-d\) or \(m_{j} \geq v\). Then for \(\ell = k\) or \(k+1\), \(B_{\ell}^{(d)}(\mathbf{m}') = B_{\ell}^{(d)}(\mathbf{m})\). Thus \(\mathbf{m}' \in \mathcal{W}(d,k)\) and \(v' = v\), \(\rho' = \rho\).
		\item Let \(j \neq \rho\) and \(v-d < m_{j} < v\). Then for \(\ell = k,k+1,k+2\),
		\begin{equation}
			B_{\ell}^{(d)}(\mathbf{m}) = B_{\ell-1}^{(d)}(\mathbf{m}').
		\end{equation}
		If then \(\mathbf{m}\) is \(k\)-capped (which can only happen if \(j > \rho\)), then \(\mathbf{m}' \notin \mathcal{W}(d,k)\). Otherwise, \(\mathbf{m}' \in \mathcal{W}(d,k)\), \(v' = v\), \(\rho' = \rho-1\).
		\item Let \(j=\rho\). If \(\mathbf{m}\) is \(k\)-capped then \(\mathbf{m}' \notin \mathcal{W}(d,k)\); otherwise \(\mathbf{m}' \in \mathcal{W}(d,k)\), \(v' = v\), \(\rho' = \rho-1\).
	\end{enumerate}
\end{Lemma}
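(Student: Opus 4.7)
The key observation is that passing from \(\mathbf{m}\) to \(\mathbf{m}' = \mathbf{m} + \mathbf{e}_{j}\) shifts the \(j\)-th row of \(\diag^{(d)}\) one column to the right: the box \((j, m_{j})\) disappears, the box \((j, m_{j} + d)\) appears, and all other rows remain unchanged. Consequently the total order of \(d\)-boxes is perturbed only by removing one element and inserting another, so every surviving box has its rank shifted by \(0\), \(+1\), or \(-1\) according to whether the removed and added boxes sit on the same or opposite sides of it. Since \(B_{k}^{(d)}(\mathbf{m}) = (\rho+1, v)\) and \(B_{k+1}^{(d)}(\mathbf{m}) = (\rho, v)\) are consecutive in column \(v\) (the column order being descending in the index), the whole argument reduces to locating \((j, m_{j})\) and \((j, m_{j} + d)\) relative to this critical pair.

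For (i), admissibility combined with the hypothesis \(m_{j} \leq v - d\) or \(m_{j} \geq v\) forces the deleted and inserted boxes to lie on the same side of \(\{B_{k}^{(d)}(\mathbf{m}), B_{k+1}^{(d)}(\mathbf{m})\}\). The bound \(m_{j} \leq v - d\) is incompatible with \(j \leq \rho + 1\) (since \(m_{j} \geq m_{\rho} > v - d\) for \(j \leq \rho\) by monotonicity, while \(j = \rho + 1\) would force \((\rho+1, v)\) not to be a box), forcing \(j \geq \rho + 2\) so that both \((j, m_{j})\) and \((j, m_{j} + d)\) lie weakly before column \(v\) at indices \(> \rho + 1\); symmetrically, \(m_{j} \geq v\) combined with admissibility forces \(j \leq \rho - 1\), placing both boxes weakly after column \(v\) at indices \(< \rho\). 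In either case the net rank shift at positions \(k\) and \(k+1\) is zero, giving \(B_{\ell}^{(d)}(\mathbf{m}') = B_{\ell}^{(d)}(\mathbf{m})\) for \(\ell = k, k+1\), whence \(v' = v\) and \(\rho' = \rho\).

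For (ii), the strict bounds \(v - d < m_{j} < v\) guarantee that \((j, m_{j})\) lies in a column strictly less than \(v\) and \((j, m_{j} + d)\) in one strictly greater. So the deletion happens before \(B_{k}^{(d)}(\mathbf{m})\) and the insertion after \(B_{k+1}^{(d)}(\mathbf{m})\), which remain boxes of \(\mathbf{m}'\); their ranks each drop by \(1\), giving \(B_{\ell}^{(d)}(\mathbf{m}) = B_{\ell-1}^{(d)}(\mathbf{m}')\) for \(\ell = k, k+1\). What sits immediately after \(B_{k}^{(d)}(\mathbf{m}') = (\rho, v)\) depends on capping: if \(\mathbf{m}\) is not capped, then \((\rho - 1, v)\) is a box of both \(\mathbf{m}\) and \(\mathbf{m}'\) and remains the immediate successor of \((\rho, v)\) (the inserted box being in a strictly higher column), so \(B_{k+1}^{(d)}(\mathbf{m}') = (\rho - 1, v) = B_{k+2}^{(d)}(\mathbf{m})\), yielding \(\mathbf{m}' \in \mathcal{W}(d, k)\) with \(\rho' = \rho - 1\); if \(\mathbf{m}\) is capped, column \(v\) has no box above \((\rho, v)\), so the new successor necessarily lies in a column \(\geq v + 1\) and \(\mathbf{m}' \notin \mathcal{W}(d, k)\). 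To see that capping forces \(j > \rho\), note that the strict bound \(m_{j} > v - d\) together with monotonicity and \(j < \rho\) would give \(m_{\rho - 1} \leq m_{j} < v\), contradicting the capping condition \(m_{\rho - 1} > v\).

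Case (iii), \(j = \rho\), is handled by the same framework: the shifted row now passes through \((\rho, v) = B_{k+1}^{(d)}(\mathbf{m})\) itself, so the analysis splits on whether \(m_{\rho} = v\) (in which case \((\rho, v)\) is the removed box) or \(m_{\rho} < v\) (in which case \((\rho, m_{\rho})\) is removed from a column strictly below \(v\)); in both sub-cases the added box \((\rho, m_{\rho} + d)\) lies in column \(> v\), and the identical capping dichotomy delivers \(\rho' = \rho - 1\) when \(\mathbf{m}\) is not capped and \(\mathbf{m}' \notin \mathcal{W}(d, k)\) when it is. The main obstacle is not conceptual but combinatorial: the careful index bookkeeping in the boundary sub-cases where a removed or added box lands exactly on column \(v\), whose position must be traced against \(\rho\) and \(\rho + 1\) by the tie-breaking rule; admissibility is invoked repeatedly to exclude the pathological sub-configurations.
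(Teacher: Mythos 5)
Your argument is correct and is essentially the paper's own: both proofs track how the row shift \(\mathbf{m}\leadsto\mathbf{m}'\) deletes the single \(d\)-box \((j,m_{j})\) and inserts \((j,m_{j}+d)\), and then locate these two boxes relative to the critical pair \(B_{k}^{(d)}(\mathbf{m})=(\rho+1,v)\), \(B_{k+1}^{(d)}(\mathbf{m})=(\rho,v)\), the paper merely organizing the case split by the position of \(j\) relative to \(\rho\) instead of by the hypotheses of (i)--(iii). One remark: you prove the displayed identity of (ii) only for \(\ell=k,k+1\) (and, implicitly, for \(\ell=k+2\) in the uncapped branch); this is the right call, since in the capped branch that identity can actually fail (e.g.\ \(r=5\), \(d=3\), \(\mathbf{m}=(3,3,1,0,0)\), \(k=7\), \(j=4\) gives \(B_{9}^{(d)}(\mathbf{m})=(3,3)=B_{9}^{(d)}(\mathbf{m}')\) rather than \(B_{8}^{(d)}(\mathbf{m}')\)), while every conclusion of the lemma that is used downstream is established by your argument.
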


\begin{proof}
	We must go through the cases \(\rho < j\), \( \rho = j\), \(\rho > j\) with possible subcases. 
	
	Assume \fbox{\(\rho < j\)}. As \(m_{j} < m_{\rho} \leq v\), we have \(m_{j} < v\). Assume further that \fbox{\(v < d\)}. The picture for the \(d\)-diagram of \(\mathbf{m}\) looks 
	\begin{center}
		\begin{tikzpicture}[scale=0.5]
			\draw[pattern=north east lines, pattern color=gray] (0,1) rectangle (3,2);
			% Horizontal lines
			\draw (0,0) -- (10,0);
			\draw (0,1) -- (10,1);
			\draw (0,2) -- (10,2);
			\draw (0,4) -- (10,4);
			\draw (0,5) -- (10,5);
			\draw (0,6) -- (10,6);
			% Vertical lines
			\draw (0,0) -- (0,7);
			\draw (3,0) -- (3,7);
			\draw (4,0) -- (4,7);
			\draw (7,0) -- (7,7);
			\draw (8,0) -- (8,7);
			% Labels
			% Row labels
			\node (rj) at (-0.5,1.5) {\(j\)};
			\node (rrho+1) at (-1,4.5) {\(\rho+1\)};
			\node (rrho) at (-0.5,5.5) {\(\rho\)};
			%Column labels
			\node (cmj) at (3.5,-0.5) {\(m_{j}\)};
			\node (cv) at (7.5,-0.5) {\(v\)};
			% Entries
			\node[scale=0.6] (k) at (7.5,4.5) {\(k\)};
			\node[scale=0.6] (k+1) at (7.5,5.5) {\(k+1\)};
			\node (star) at (3.5,1.5) {\(*\)};
		\end{tikzpicture}
	\end{center}
	As some box (here marked with a \(*\)) to the left of the \(v\)-column is erased under \(\mathbf{m} \leadsto \mathbf{m}'\), we have \( B_{\ell}^{(d)}(\mathbf{m}) = B_{\ell-1}^{(d)}(\mathbf{m}')\) for \(\ell = k,k+1,k+2\). If
	\(\mathbf{m}\) is not \(k\)-capped, then \(B_{k+2}^{(d)}(\mathbf{m}) = B_{k+1}^{(d)}(\mathbf{m}')\) has still value \(v\), so \(\mathbf{m}' \in \mathcal{W}(d,k)\), \(v' = v\), \(\rho' = \rho-1\). If \(\mathbf{m}\) is \(k\)-capped then
	\(B_{k+1}^{(d)}(\mathbf{m}')\) has value \(v+1\), and \(\mathbf{m}' \notin \mathcal{W}(d,k)\). We call this phenomenon a \textbf{column break}.
	
	Assume \fbox{\(v \geq d\)}. In this case, if \(m_{j} \leq v-d\), then \(B_{\ell}^{(d)}(\mathbf{m}') = B_{\ell}^{(d)}(\mathbf{m})\) for \(\ell = k,k+1\) (\(\mathbf{m} \leadsto \mathbf{m}'\) shifts the \(j\)-line of 
	\(\diag^{(d)}\) one to the right, but still \( (j,v) \) is a non-box for \(\diag^{(d)}(\mathbf{m}')\)), so \(\mathbf{m}' \in \mathcal{W}(d,k)\). Otherwise, \(v-d < m_{j} < v\), and the discussion and results of the subcase \(v < d\) prevail.
	So for \(\rho < j\), everything comes out as asserted.
	
	Next, consider the case \fbox{\(\rho = j\)}. We have \(m_{j} = m_{\rho} \leq v < m_{\rho} + d\). If \fbox{\(m_{\rho} = v\)} then, as \(j = \rho\) is admissible, \(\mathbf{m}\) is \(k\)-capped,
	\(B_{k}^{(d)}(\mathbf{m}') = B_{k}^{(d)}(\mathbf{m})\) with value \(v\), while \(B_{k+1}^{(d)}(\mathbf{m}')\) belongs to the next column. Thus we have a column break and \(\mathbf{m}' \notin \mathcal{W}(d,k)\). If
	\fbox{\(m_{\rho} < v\) and \(\mathbf{m}\) is \(k\)-capped}, then \(B_{k-1}^{(d)}(\mathbf{m}') = B_{k}^{(d)}(\mathbf{m})\), \(B_{k}^{(d)}(\mathbf{m}') = B_{k+1}^{(d)}(\mathbf{m})\) and \(B_{k+1}^{(d)}(\mathbf{m}')\) is the
	\((v+1)\)-th column. If \fbox{\(m_{\rho} < v\) and \(\mathbf{m}\) is not \(k\)-capped} then again \(B_{k-1}^{(d)}(\mathbf{m}') = B_{k}^{(d)}(\mathbf{m})\), \(B_{k}^{(d)}(\mathbf{m}') = B_{k+1}^{(d)}(\mathbf{m})\), but 
	\(B_{k+1}^{(d)}(\mathbf{m}') = (\rho-1,v)\). All three subcases are as asserted.
	
	Finally, assume \fbox{\( \rho > j\)}. As \(m_{j} \geq m_{\rho} > v-d\), we have \(m_{j} > v-d\). If \fbox{\(m_{j} \geq v\)} then the numbering of boxes \(B_{\ell}^{(d)}\) of \(\diag^{(d)}\) with \(\ell \leq k+1\) is not affected by
	\(\mathbf{m} \leadsto \mathbf{m}'\). In particular \(B_{\ell}^{(d)}(\mathbf{m}') = B_{\ell}^{(d)}(\mathbf{m})\) for \(\ell = k, k+1\), so \(\mathbf{m}' \in \mathcal{W}(d,k)\), \(v' = v\), \(\rho' = \rho\). If otherwise
	\fbox{\(v-d < m_{j} < v\)} then as before \(B_{\ell}^{(d)}(\mathbf{m}) = B_{\ell-1}^{(d)}(\mathbf{m}')\) for \(\ell=k,k+1,k+2\), so the reasoning above implies: \(B_{k}^{(d)}(\mathbf{m}') = \mathbf{B}_{k-1}^{(d)}(\mathbf{m})\)
	and \(B_{k+1}^{(d)}(\mathbf{m}') = B_{k+2}^{(d)}(\mathbf{m})\) have the same value \(v\), as our assumptions imply that \(\mathbf{m}\) is not \(k\)-capped. Hence in this subcase \(\mathbf{m}' \in \mathcal{W}(d,k)\), too, and
	\(v' = v\), \(\rho' = \rho -1\).
\end{proof}

\begin{Corollary} \label{Corollary.Condition-for-k-capped}
	Let \(\mathbf{m}\) be a vertex of \(\mathcal{W}(d,k)\) for which there is an admissible \(j\) such that \(\mathbf{m} + \mathbf{e}_{j} \notin \mathcal{W}(d,k)\). Then \(j \geq \rho_{k}^{(d)}(\mathbf{m})\), and \(\mathbf{m}\) is 
	\(k\)-capped. \hfill \mbox{\(\square\)}
\end{Corollary}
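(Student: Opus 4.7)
The plan is to derive the corollary directly from Lemma \ref{Lemma.Boxes-of-sums-of-certain-vertices} by a short case analysis on the position of the admissible index $j$ relative to $\rho \defeq \rho_{k}^{(d)}(\mathbf{m})$ and on the value of $m_{j}$.

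Set $\mathbf{m}' \defeq \mathbf{m} + \mathbf{e}_{j}$ and $v \defeq v_{k}^{(d)}(\mathbf{m})$. The hypothesis $\mathbf{m}' \notin \mathcal{W}(d,k)$ excludes outright all the cases of the lemma in which $\mathbf{m}'$ is guaranteed to lie in $\mathcal{W}(d,k)$. Concretely, part (i) is ruled out, so we cannot have simultaneously $j \neq \rho$ and $m_{j} \notin (v-d, v)$. Therefore only two possibilities remain: either $j = \rho$ (part (iii)), or $j \neq \rho$ with $v-d < m_{j} < v$ (part (ii)).

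In both of these two surviving cases, Lemma \ref{Lemma.Boxes-of-sums-of-certain-vertices} asserts that the membership $\mathbf{m}' \notin \mathcal{W}(d,k)$ forces $\mathbf{m}$ to be $k$-capped. Moreover, in the part (ii) case the lemma explicitly records that $\mathbf{m}$ being $k$-capped together with $j \neq \rho$ is only possible when $j > \rho$; a brief sanity check confirms this, since if $j < \rho$ then by the ordering $m_{1} \geq \dots \geq m_{r}$ we would have $m_{\rho-1} \leq m_{j} < v$, contradicting the cap condition $m_{\rho-1} > v$ (with the boundary case $\rho = 1$ being vacuous as no $j < \rho$ exists). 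In the part (iii) case one trivially has $j = \rho$. Combining, in every surviving case $j \geq \rho$ and $\mathbf{m}$ is $k$-capped, as claimed.

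There is no substantive obstacle here: the corollary is a bookkeeping consequence of the lemma, whose content is precisely a case-by-case description of what happens to $B_{k}^{(d)}$ and $B_{k+1}^{(d)}$ under the move $\mathbf{m} \leadsto \mathbf{m} + \mathbf{e}_{j}$. The only mildly delicate point is the observation that $j < \rho$ is incompatible with $\mathbf{m}$ being $k$-capped when $m_{j} < v$, which is where the $k$-capped hypothesis gets transformed into the inequality $j \geq \rho$.
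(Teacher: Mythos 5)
Your proposal is correct and is exactly the argument the paper intends: the corollary carries only a \(\square\) because it is read off directly from the case analysis of Lemma \ref{Lemma.Boxes-of-sums-of-certain-vertices}, precisely as you do by eliminating case (i) and invoking the parenthetical \enquote{which can only happen if \(j > \rho\)} in case (ii). Your sanity check that \(j < \rho\) together with \(m_{j} < v\) contradicts the cap condition \(m_{\rho-1} > v\) is a correct (and welcome) justification of that parenthetical.
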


Suppose that the vertex \(\mathbf{m}'\) of \(\mathcal{W} \smallsetminus \mathcal{W}(d,k)\) results from \(\mathbf{m} \leadsto \mathbf{m}' = \mathbf{m} + \mathbf{e}_{j}\) with a column break as in Lemma \ref{Lemma.Boxes-of-sums-of-certain-vertices}. What 
happens upon adding another \(\mathbf{e}_{j'}\) to \(\mathbf{m}'\)?

\begin{Lemma} \label{Lemma.Membership-of-sums-of-vertices}
	Let \(\mathbf{m} \in \mathcal{W}(d,k)\) and \(\mathbf{m}' = \mathbf{m} + \mathbf{e}_{j}\) be as in Lemma \ref{Lemma.Boxes-of-sums-of-certain-vertices}, where \(\mathbf{m}' \in \mathcal{W}(d,k)\). Let further \(j' \neq j\) be admissible for \(\mathbf{m}'\) and 
	\(\mathbf{m}'' \defeq \mathbf{m}' + \mathbf{e}_{j'}\). Then
	\begin{enumerate}[label=\(\mathrm{(\roman*)}\)]
		\item if \(j' < \rho \defeq \rho_{k}^{(d)}(\mathbf{m})\) then \(\mathbf{m}'' \notin \mathcal{W}(d,k)\);
		\item if \(\rho \leq j' < j\) then \(\mathbf{m}'' \in \mathcal{W}(d,k)\);
		\item if \(j < j'\) then \(\mathbf{m}'' \in \mathcal{W}(d,k)\) if and only if \(v-d < m_{j'}\), where \(v \defeq v_{k}^{(d)}(\mathbf{m})\).
	\end{enumerate}
\end{Lemma}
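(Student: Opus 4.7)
The plan is to reapply Lemma \ref{Lemma.Boxes-of-sums-of-certain-vertices}, now to the pair $(\mathbf{m}', j')$ with $\mathbf{m}'$ in the role of $\mathbf{m}$ and $j'$ in the role of $j$. The preparatory step is bookkeeping: since $\mathbf{m}' \in \mathcal{W}(d,k)$ by hypothesis, Lemma \ref{Lemma.Boxes-of-sums-of-certain-vertices} already delivers $v_k^{(d)}(\mathbf{m}') = v$ and pins $\rho' \defeq \rho_k^{(d)}(\mathbf{m}')$ to be either $\rho$ or $\rho - 1$, according to which of its three sub-scenarios governs the shift $\mathbf{m} \leadsto \mathbf{m}'$. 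In parallel I read off the $k$-cap status of $\mathbf{m}'$ from that of $\mathbf{m}$ by directly inspecting how shifting row $j$ affects the cells above $B_{k+1}^{(d)}(\mathbf{m})$ in its column.

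With these invariants pinned down, each conclusion reduces to the appropriate clause of Lemma \ref{Lemma.Boxes-of-sums-of-certain-vertices} applied to $(\mathbf{m}', j')$. In case (ii), $\rho \le j' < j$, monotonicity of $\mathbf{m}$ gives $m'_{j'} = m_{j'} \ge m_j$, and $B_\rho^{(d)}(\mathbf{m}) = (\rho,v)$ forces $m_\rho > v-d$; combined with $j' \ge \rho \ge \rho'$, this lands $m_{j'}$ outside the capping-sensitive region, and part (i) or the non-capped branch of part (ii) of Lemma \ref{Lemma.Boxes-of-sums-of-certain-vertices} yields $\mathbf{m}'' \in \mathcal{W}(d,k)$. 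In case (iii), $j < j'$, again $j' > \rho \ge \rho'$, and the dichotomy $m_{j'} > v-d$ vs.\ $m_{j'} \le v-d$ is exactly the internal split of Lemma \ref{Lemma.Boxes-of-sums-of-certain-vertices}: the former option gives $\mathbf{m}'' \in \mathcal{W}(d,k)$ via part (i) or the non-capped branch of part (ii), while the latter forces $B_k^{(d)}$ and $B_{k+1}^{(d)}$ to end up in adjacent columns of $\diag^{(d)}(\mathbf{m}'')$. In case (i), $j' < \rho$, monotonicity of $\mathbf{m}$ forces the column of $B_{k+1}^{(d)}(\mathbf{m}')$ to be fully occupied up to row $1$, so the row-$j'$ shift erases a box from that column with no replacement from above; we thereby land in the $k$-capped branch of part (ii) (if $j' < \rho'$) or in part (iii) (if $j' = \rho'$) with $\mathbf{m}'$ capped, and Lemma \ref{Lemma.Boxes-of-sums-of-certain-vertices} produces $\mathbf{m}'' \notin \mathcal{W}(d,k)$.

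The main obstacle will be the case-by-case bookkeeping of the $k$-cap status of $\mathbf{m}'$: the shift $\mathbf{m} \leadsto \mathbf{m}'$ can flip the cap status, and for each of the three scenarios of Lemma \ref{Lemma.Boxes-of-sums-of-certain-vertices} that yield $\mathbf{m}' \in \mathcal{W}(d,k)$, one must separately tabulate whether the cell directly above $B_{k+1}^{(d)}(\mathbf{m}')$ is a box or not, since this is precisely what controls the application of Lemma \ref{Lemma.Boxes-of-sums-of-certain-vertices} to the second shift $\mathbf{m}' \leadsto \mathbf{m}''$. Once the table is established, the three assertions become direct citations of the corresponding clauses of that lemma.
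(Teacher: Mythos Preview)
Your entire approach rests on a misreading of the hypothesis, caused by a typo in the lemma statement. The condition should read \(\mathbf{m}' \notin \mathcal{W}(d,k)\), not \(\mathbf{m}' \in \mathcal{W}(d,k)\). This is unambiguous from the sentence immediately preceding the lemma (``Suppose that the vertex \(\mathbf{m}'\) of \(\mathcal{W} \smallsetminus \mathcal{W}(d,k)\) results from \(\mathbf{m} \leadsto \mathbf{m}' = \mathbf{m} + \mathbf{e}_{j}\) with a column break\ldots''), from the first line of the paper's proof (``First note that \(j \geq \rho\) by Corollary~\ref{Corollary.Condition-for-k-capped}'', which only fires when \(\mathbf{m}' \notin \mathcal{W}(d,k)\)), and from every picture in the proof, which shows the \(k\)-th and \((k{+}1)\)-th boxes of \(\mathbf{m}'\) in different columns. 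Under the correct hypothesis, Lemma~\ref{Lemma.Boxes-of-sums-of-certain-vertices} simply does not apply to the pair \((\mathbf{m}', j')\): that lemma requires its base vertex to lie in \(\mathcal{W}(d,k)\), and here \(\mathbf{m}'\) does not. So your plan to ``reapply Lemma~\ref{Lemma.Boxes-of-sums-of-certain-vertices}, now to the pair \((\mathbf{m}', j')\)'' cannot get off the ground.

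Even under the hypothesis as you read it, your argument for part~(i) is broken: if \(\mathbf{m}' \in \mathcal{W}(d,k)\) and \(j' < \rho'\), Lemma~\ref{Lemma.Boxes-of-sums-of-certain-vertices} gives \(\mathbf{m}'' \in \mathcal{W}(d,k)\), not the opposite; your claim that the \(v\)-column of \(\mathbf{m}'\) is ``fully occupied up to row~1'' and that \(\mathbf{m}'\) is \(k\)-capped does not follow from monotonicity. The paper's proof avoids all of this by working directly with the \(d\)-diagrams of \(\mathbf{m}\), \(\mathbf{m}'\), \(\mathbf{m}''\): since \(\mathbf{m}\) is \(k\)-capped and \(j \ge \rho\) (Corollary~\ref{Corollary.Condition-for-k-capped}), one knows exactly where the column break at \(\mathbf{m}'\) sits, and then one tracks by hand how the shift of row \(j'\) moves the \(k\)-th and \((k{+}1)\)-th boxes, splitting into the cases \(j' < \rho\), \(\rho \le j' < j\), \(j < j'\) (with the further split on whether \((j',v)\) is a box of \(\mathbf{m}\), i.e., whether \(v - d < m_{j'}\)). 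You will need to do the same.
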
	

\begin{proof}
	First note that \(j \geq \rho\) by Corollary \ref{Corollary.Condition-for-k-capped}, so that (i), (ii), (iii) cover all possible cases. Besides other trivial facts about \(d\)-diagrams, we will use:
	\subsubsection{} \label{Subsubsection.Consecutive-set} Given a value \(w\) and a vertex \(\mathbf{n}\) of \(\mathcal{W}\), the set \( \{ i \in \mathds{N}_{0} \mid (i,w) \text{ is a box of } \diag^{(d)}(\mathbf{n}) \}\) is 
	consecutive, that is, has no gaps. \stepcounter{equation}%
	
	Let us first treat the \textbf{Case (a)} \fbox{\(j = \rho\) and \(m_{\rho} = v\)}. Here the relevant parts of \(\diag^{(d)}(\mathbf{m})\) and \(\diag^{(d)}(\mathbf{m}')\) are given by the pictures 
	\begin{center}
		\begin{tikzpicture}[scale=0.65]
			\draw[pattern=north east lines, pattern color=gray] (0,4) -- (0,7) -- (5,7) -- (5,5) -- (4,5) -- (4,4) -- cycle;
			% Horizontal lines
			\draw (0,0) -- (6,0);
			\draw (0,3) -- (6,3);
			\draw (0,4) -- (6,4);
			\draw (0,5) -- (6,5);
			\draw (0,7) -- (6,7);
			% Vertical lines
			\draw (0,0) -- (0,7);
			\draw (4,0) -- (4,7);
			\draw (5,0) -- (5,7);
			\draw (6,0) -- (6,7);
			%Labels
			\node (m) at (3,7.75) {\(\mathbf{m}\)};
			\node (rho+1) at (-1,3.5) {\(\rho+1\)};
			\node (jerho) at (-1,4.5) {\(j=\rho\)};
			\node (v) at (4.5,-0.5) {\(v\)};
			\node (v+1) at (5.5,-0.5) {\(v+1\)};
			\node (star) at (5.5,4.5) {\(*\)};
			\node[scale=0.8] (k) at (4.5,3.5) {\(k\)};
			\node[scale=0.8] (k+1) at (4.5,4.5) {\(k+1\)};
			\node (dots) at (6.75,3.5) {\(\cdots\)};
			
			% Second pictures
			\draw[pattern=north east lines, pattern color=gray] (10,4) rectangle (15,7);
			% Horizontal lines
			\draw (10,0) -- (16,0);
			\draw (10,3) -- (16,3);
			\draw (10,4) -- (16,4);
			\draw (10,5) -- (16,5);
			\draw (10,7) -- (16,7);
			% Vertical lines
			\draw (10,0) -- (10,7);
			\draw (14,0) -- (14,7);
			\draw (15,0) -- (15,7);
			\draw (16,0) -- (16,7);
			%Labels
			\node (m2) at (13,7.75) {\(\mathbf{m}'\)};
			\node (2rho+1) at (9,3.5) {\(\rho+1\)};
			\node (2jerho) at (9,4.5) {\(j=\rho\)};
			\node (2v) at (14.5,-0.5) {\(v\)};
			\node (2v+1) at (15.5,-0.5) {\(v+1\)};
			\node (2star) at (15.5,4.5) {\(*\)};
			\node[scale=0.8] (2k) at (14.5,3.5) {\(k\)};
			\node (2dots) at (16.75,3.5) {\(\cdots\)};
		\end{tikzpicture}
	\end{center}
	In particular, \( i(B_{k+1}^{(d)}(\mathbf{m}')) > \rho \). This implies that if \fbox{\(j' < \rho\)}, the numbering of \(B_{\ell}^{(d)}\) for \(\ell \leq k+1\) remains unchanged under \(\mathbf{m}' \leadsto \mathbf{m}''\), so 
	\(\mathbf{m}'' \notin \mathcal{W}(d,k)\). Thus suppose \fbox{\(j' > \rho\)}. If \( (j', v)\) is a non-box for \(\mathbf{m}\) (or for \(\mathbf{m}'\), which amounts to the same), then still 
	\(B_{k}^{(d)}(\mathbf{m}'') = B_{k}^{(d)}(\mathbf{m}') = B_{k}^{(d)}(\mathbf{m}) = (\rho+1,v)\), while \(v(B_{k+1}^{(d)}(\mathbf{m}'')) = v+1\), thus \(\mathbf{m}'' \notin \mathcal{W}(d,k)\). So assume that \( (j',v) \) is a box of \(\mathbf{m}\)
	(and \(\mathbf{m}'\)). If \(j' > \rho+1\) then \(m_{j'} < v\) by the admissibility of \(j'\), so \(B_{k}^{(d)}(\mathbf{m}') = B_{k-1}^{(d)}(\mathbf{m}'')\), and both \(B_{k}^{(d)}(\mathbf{m}'')\) and \(B_{k+1}^{(d)}(\mathbf{m}'')\) lie in the
	\((v+1)\)-column, which gives \(\mathbf{m}'' \in \mathcal{W}(d,k)\). The same argument works if \(j' = \rho+1\) and \(m_{j'} = m_{\rho+1} < v\). If \(j' = \rho + 1\) and \(m_{j'} = v\), then \( (\rho+1,v)\) is a non-box for \(\mathbf{m}''\), and
	both \(B_{k}^{(d)}(\mathbf{m}'')\) and \(B_{k+1}^{(d)}(\mathbf{m}'')\) have value \(v+1\). So the case (a) is as announced, taking into account that
	\begin{equation} \label{Eq.Characterization-of-boxes}
		(j',v) \text{ a box of } \mathbf{m} \Longleftrightarrow v-d < m_{j'} \leq v \Longleftrightarrow v-d < m_{j'},
	\end{equation}
	as \(m_{j'} > v\) is impossible. 
	
	Now consider \textbf{Case (b)} \fbox{\(j=\rho\) and \(m_{\rho} <v\)}. The pictures of \(\mathbf{m}\) and \(\mathbf{m}'\) are
	\begin{center}
		\begin{tikzpicture}[scale=0.65]
			\draw[pattern=north east lines, pattern color=gray] (0,5) rectangle (5,7);
			% Horizontal lines
			\draw (0,0) -- (6,0);
			\draw (0,3) -- (6,3);
			\draw (0,4) -- (6,4);
			\draw (0,5) -- (6,5);
			\draw (0,6) -- (6,6);
			\draw (0,7) -- (6,7);
			% Vertical lines
			\draw (0,0) -- (0,7);
			\draw (3,0) -- (3,7);
			\draw (4,0) -- (4,7);
			\draw (5,0) -- (5,7);
			\draw (6,0) -- (6,7);
			%Labels
			\node (m) at (3,7.75) {\(\mathbf{m}\)};
			\node (rho+1) at (-1,3.5) {\(\rho+1\)};
			\node (jerho) at (-1,4.5) {\(j=\rho\)};
			\node (v-1) at (3.5,-0.5) {\(v-1\)};
			\node (v) at (4.5,-0.5) {\(v\)};
			\node (v+1) at (5.5,-0.5) {\(v+1\)};
			\node (star1) at (3.5,4.5) {\(*\)};
			\node (star2) at (3.5,3.5) {\(*\)};
			\node[scale=0.8] (k) at (4.5,3.5) {\(k\)};
			\node[scale=0.8] (k+1) at (4.5,4.5) {\(k+1\)};
			\node (dots) at (6.75,3.5) {\(\cdots\)};
			
			% Second picture
			\draw[pattern=north east lines, pattern color=gray] (10,5) rectangle (15,7);
			% Horizontal lines
			\draw (10,0) -- (16,0);
			\draw (10,3) -- (16,3);
			\draw (10,4) -- (16,4);
			\draw (10,5) -- (16,5);
			\draw (10,6) -- (16,6);
			\draw (10,7) -- (16,7);
			% Vertical lines
			\draw (10,0) -- (10,7);
			\draw (13,0) -- (13,7);
			\draw (14,0) -- (14,7);
			\draw (15,0) -- (15,7);
			\draw (16,0) -- (16,7);
			%Labels
			\node (2m) at (13,7.75) {\(\mathbf{m}'\)};
			\node (2rho+1) at (9,3.5) {\(\rho+1\)};
			\node (2jerho) at (9,4.5) {\(j=\rho\)};
			\node (2v-1) at (13.5,-0.5) {\(v-1\)};
			\node (2v) at (14.5,-0.5) {\(v\)};
			\node (2v+1) at (15.5,-0.5) {\(v+1\)};
			\node (2star1) at (13.5,3.5) {\(*\)};
			\node (2star2) at (15.5,4.5) {\(*\)};
			\node[scale=0.8] (2k) at (14.5,4.5) {\(k\)};
			\node[scale=0.8] (2k-1) at (14.5,3.5) {\(k-1\)};
			\node (2dots) at (16.75,3.5) {\(\cdots\)};
		\end{tikzpicture}
	\end{center}
	Again, \(i(B_{k+1}^{(d)}(\mathbf{m}')) \geq \rho\), and if \fbox{\(j' < \rho\)} then \(B_{\ell}^{(d)}(\mathbf{m}'') = B_{\ell}^{(d)}(\mathbf{m}')\) for \(\ell \leq k+1\), so \(\mathbf{m}'' \notin \mathcal{W}(d,k)\). For \fbox{\(j' > \rho\)} and
	\( (j',v) \) a non-box of \(\mathbf{m}\) or \(\mathbf{m}'\), \(B_{\ell}^{(d)}(\mathbf{m}'') = B_{\ell}^{(d)}(\mathbf{m}')\) for \(\ell = k,k+1\), hence \(\mathbf{m}'' \notin \mathcal{W}(d,k)\). If however \( (j',v)\) is a box of \(\mathbf{m}\), we find
	the picture for \(\mathbf{m}''\)
	\begin{center}
		\begin{tikzpicture}[scale=0.65]
			\draw[pattern=north east lines, pattern color=gray] (0,5) rectangle (5,7);
			% Horizontal lines
			\draw (0,0) -- (6,0);
			\draw (0,1) -- (6,1);
			\draw (0,2) -- (6,2);
			\draw (0,3) -- (6,3);
			\draw (0,4) -- (6,4);
			\draw (0,5) -- (6,5);
			\draw (0,7) -- (6,7);
			% Vertical lines
			\draw (0,0) -- (0,7);
			\draw (3,0) -- (3,7);
			\draw (4,0) -- (4,7);
			\draw (5,0) -- (5,7);
			\draw (6,0) -- (6,7);
			%Labels
			\node (m) at (3,7.75) {\(\mathbf{m}''\)};
			\node (rho+1) at (-1,3.5) {\(\rho+1\)};
			\node (jerho) at (-1,4.5) {\(j=\rho\)};
			\node (jprime) at (-0.5,1.5) {\(j'\)};
			\node (v-1) at (3.5,-0.5) {\(v-1\)};
			\node (v) at (4.5,-0.5) {\(v\)};
			\node (v+1) at (5.5,-0.5) {\(v+1\)};
			\node (star1) at (3.5,1.5) {\(*\)};
			\node (star2) at (4.5,1.5) {\(*\)};
			\node (star3) at (5.5,1.5) {\(*\)};
			\node (star4) at (3.5,3.5) {\(*\)};
			\node (star5) at (5.5,4.5) {\(*\)};
			\node[scale=0.8] (k-2) at (4.5,3.5) {\(k-2\)};
			\node[scale=0.8] (k-1) at (4.5,4.5) {\(k-1\)};
			\node (dots) at (6.75,3) {\(\cdots\)};
		\end{tikzpicture}
	\end{center}
	As by \ref{Subsubsection.Consecutive-set} the \((v+1)\)-column contains at least 2 boxes, \(B_{k}^{(d)}(\mathbf{m}'')\) and \(B_{k+1}^{(d)}(\mathbf{m}'')\) are among them, which gives \(\mathbf{m}'' \in \mathcal{W}(d,k)\). Using \eqref{Eq.Characterization-of-boxes} and summarizing, 
	case (b) is as stated.
	
	For the remaining cases, we distinguish (c) \fbox{\(j' < \rho < j\)}, (d) \fbox{\(\rho \leq j' < j\)}, and (e) \fbox{\(\rho < j < j'\)}.
	
	\textbf{Case (c)} is easy: As \(\mathbf{m}\) is \(k\)-capped, \(B_{\ell}^{(d)}(\mathbf{m}'') = B_{\ell}^{(d)}(\mathbf{m}')\) for \(\ell \leq k+1\), which gives the assertion.
	
	\textbf{Case (d)} \fbox{\(\rho \leq j' < j\)} with its subcases (d1) \fbox{\(\rho = j', m_{\rho} = v\)}, (d2) \fbox{\(\rho = j'\), \(m_{\rho} < v\)}, (d3) \fbox{\(\rho < j'\)}. In subcases (d1) and (d2), the pictures for \(\mathbf{m}\) and \(\mathbf{m}'\) are
	\begin{center}
		\begin{tikzpicture}[scale=0.65]
			\draw[pattern=north east lines, pattern color=gray] (0,5) rectangle (5,7);
			% Horizontal lines
			\draw (0,0) -- (6,0);
			\draw (0,1) -- (6,1);
			\draw (0,2) -- (6,2);
			\draw (0,3) -- (6,3);
			\draw (0,4) -- (6,4);
			\draw (0,5) -- (6,5);
			\draw (0,7) -- (6,7);
			% Vertical lines
			\draw (0,0) -- (0,7);
			\draw (3,0) -- (3,7);
			\draw (4,0) -- (4,7);
			\draw (5,0) -- (5,7);
			\draw (6,0) -- (6,7);
			%Labels
			\node (m) at (3,7.75) {\(\mathbf{m}\)};
			\node (rho+1) at (-1,3.5) {\(\rho+1\)};
			\node (jerho) at (-1,4.5) {\(j'=\rho\)};
			\node (jprime) at (-0.5,1.5) {\(j\)};
			\node (v-1) at (3.5,-0.5) {\(v-1\)};
			\node (v) at (4.5,-0.5) {\(v\)};
			\node (v+1) at (5.5,-0.5) {\(v+1\)};
			\node (star1) at (3.5,1.5) {\(*\)};
			\node (star2) at (4.5,1.5) {\(*\)};
			\node (star3) at (5.5,4.5) {\(*\)};
			\node[scale=0.8] (k) at (4.5,3.5) {\(k\)};
			\node[scale=0.8] (k+1) at (4.5,4.5) {\(k+1\)};
			\node (dots) at (6.75,3.5) {\(\cdots\)};
			
			% Seond picture
			\draw[pattern=north east lines, pattern color=gray] (10,5) rectangle (15,7);
			% Horizontal lines
			\draw (10,0) -- (16,0);
			\draw (10,1) -- (16,1);
			\draw (10,2) -- (16,2);
			\draw (10,3) -- (16,3);
			\draw (10,4) -- (16,4);
			\draw (10,5) -- (16,5);
			\draw (10,7) -- (16,7);
			% Vertical lines
			\draw (10,0) -- (10,7);
			\draw (13,0) -- (13,7);
			\draw (14,0) -- (14,7);
			\draw (15,0) -- (15,7);
			\draw (16,0) -- (16,7);
			%Labels
			\node (2m) at (13,7.75) {\(\mathbf{m}'\)};
			\node (2rho+1) at (9,3.5) {\(\rho+1\)};
			\node (2jerho) at (9,4.5) {\(j'=\rho\)};
			\node (2jprime) at (9.5,1.5) {\(j\)};
			\node (2v-1) at (13.5,-0.5) {\(v-1\)};
			\node (2v) at (14.5,-0.5) {\(v\)};
			\node (2v+1) at (15.5,-0.5) {\(v+1\)};
			\node (2star1) at (15.5,1.5) {\(*\)};
			\node (2star2) at (14.5,1.5) {\(*\)};
			\node (2star3) at (15.5,4.5) {\(*\)};
			\node[scale=0.8] (2k-1) at (14.5,3.5) {\(k-1\)};
			\node[scale=0.8] (2k) at (14.5,4.5) {\(k\)};
			\node (2dots) at (16.75,3.5) {\(\cdots\)};
		\end{tikzpicture}
	\end{center}
	since \(j\) (resp. \(j'\)) is admissible for \(\mathbf{m}\) (resp. \(\mathbf{m}'\)) and \(\mathbf{m}\) is \(k\)-capped. For \(\mathbf{m}''\), we find
	\begin{center}
		\begin{tikzpicture}[scale=0.65]
			\draw[pattern=north east lines, pattern color=gray] (0,4) rectangle (5,7);
			% Horizontal lines
			\draw (0,0) -- (6,0);
			\draw (0,1) -- (6,1);
			\draw (0,2) -- (6,2);
			\draw (0,3) -- (6,3);
			\draw (0,4) -- (6,4);
			\draw (0,5) -- (6,5);
			\draw (0,7) -- (6,7);
			% Vertical lines
			\draw (0,0) -- (0,7);
			\draw (3,0) -- (3,7);
			\draw (4,0) -- (4,7);
			\draw (5,0) -- (5,7);
			\draw (6,0) -- (6,7);
			%Labels
			\node (m) at (3,7.75) {\(\mathbf{m}'', \text{ subcase (d1)}\)};
			\node (rho+1) at (-1,3.5) {\(\rho+1\)};
			\node (jerho) at (-1,4.5) {\(j'=\rho\)};
			\node (jprime) at (-0.5,1.5) {\(j\)};
			\node (v-1) at (3.5,-0.5) {\(v-1\)};
			\node (v) at (4.5,-0.5) {\(v\)};
			\node (v+1) at (5.5,-0.5) {\(v+1\)};
			\node (star2) at (4.5,1.5) {\(*\)};
			\node (star3) at (5.5,1.5) {\(*\)};
			\node (star5) at (5.5,4.5) {\(*\)};
			\node[scale=0.8] (k-1) at (4.5,3.5) {\(k-1\)};
			\node (dots) at (6.75,3.5) {\(\cdots\)};
			
			% Picture 2
			
			\draw[pattern=north east lines, pattern color=gray] (10,5) rectangle (15,7);
			% Horizontal lines
			\draw (10,0) -- (16,0);
			\draw (10,1) -- (16,1);
			\draw (10,2) -- (16,2);
			\draw (10,3) -- (16,3);
			\draw (10,4) -- (16,4);
			\draw (10,5) -- (16,5);
			\draw (10,7) -- (16,7);
			% Vertical lines
			\draw (10,0) -- (10,7);
			\draw (13,0) -- (13,7);
			\draw (14,0) -- (14,7);
			\draw (15,0) -- (15,7);
			\draw (16,0) -- (16,7);
			%Labels
			\node (2m) at (13,7.75) {\(\mathbf{m}'', \text{ subcase (d2)}\)};
			\node (2rho+1) at (9,3.5) {\(\rho+1\)};
			\node (2jerho) at (9,4.5) {\(j'=\rho\)};
			\node (2jprime) at (9.5,1.5) {\(j\)};
			\node (2v-1) at (13.5,-0.5) {\(v-1\)};
			\node (2v) at (14.5,-0.5) {\(v\)};
			\node (2v+1) at (15.5,-0.5) {\(v+1\)};
			\node (2star2) at (14.5,1.5) {\(*\)};
			\node (2star3) at (15.5,1.5) {\(*\)};
			\node (2star5) at (15.5,4.5) {\(*\)};
			\node[scale=0.8] (2k-1) at (14.5,4.5) {\(k-1\)};
			\node[scale=0.8] (2k-2) at (14.5,3.5) {\(k-2\)};
			\node (2dots) at (16.75,3.5) {\(\cdots\)};
		\end{tikzpicture}
	\end{center}
	since \( (\rho, v-1) \) had been a non-box (resp. a box) for \(\mathbf{m}\) and \(\mathbf{m}'\). In subcase (d3) we get the pictures 
	\begin{center}
		\begin{tikzpicture}[scale=0.65]
			\draw[pattern=north east lines, pattern color=gray] (0,7) rectangle (5,9);
			% Horizontal lines
			\draw (0,0) -- (6,0);
			\draw (0,1) -- (6,1);
			\draw (0,2) -- (6,2);
			\draw (0,3) -- (6,3);
			\draw (0,4) -- (6,4);
			\draw (0,5) -- (6,5);
			\draw (0,6) -- (6,6);
			\draw (0,7) -- (6,7);
			\draw (0,9) -- (6,9);
			% Vertical lines
			\draw (0,0) -- (0,9);
			\draw (3,0) -- (3,9);
			\draw (4,0) -- (4,9);
			\draw (5,0) -- (5,9);
			\draw (6,0) -- (6,9);
			%Labels
			\node (m) at (3,9.75) {\(\mathbf{m}\)};
			\node (rho+1) at (-1,5.5) {\(\rho+1\)};
			\node (jerho) at (-0.5,6.5) {\(\rho\)};
			\node (j) at (-0.5,1.5) {\(j\)};
			\node (jprime) at (-0.5,3.5) {\(j'\)};
			\node (v-1) at (3.5,-0.5) {\(v-1\)};
			\node (v) at (4.5,-0.5) {\(v\)};
			\node (v+1) at (5.5,-0.5) {\(v+1\)};
			\node (star1) at (3.5,1.5) {\(*\)};
			\node (star2) at (4.5,1.5) {\(*\)};
			\node (star3) at (3.5,3.5) {\(*\)};
			\node (star4) at (4.5,3.5) {\(*\)};
			\node (star5) at (5.5,6.5) {\(*\)};
			\node[scale=0.8] (k) at (4.5,5.5) {\(k\)};
			\node[scale=0.8] (k+1) at (4.5,6.5) {\(k+1\)};
			\node (dots) at (6.75,4) {\(\cdots\)};
			
			% Picture 2
			\draw[pattern=north east lines, pattern color=gray] (7.5,7) rectangle (12.5,9);
			% Horizontal lines
			\draw (7.5,0) -- (13.5,0);
			\draw (7.5,1) -- (13.5,1);
			\draw (7.5,2) -- (13.5,2);
			\draw (7.5,3) -- (13.5,3);
			\draw (7.5,4) -- (13.5,4);
			\draw (7.5,5) -- (13.5,5);
			\draw (7.5,6) -- (13.5,6);
			\draw (7.5,7) -- (13.5,7);
			\draw (7.5,9) -- (13.5,9);
			% Vertical lines
			\draw (7.5,0) -- (7.5,9);
			\draw (10.5,0) -- (10.5,9);
			\draw (11.5,0) -- (11.5,9);
			\draw (12.5,0) -- (12.5,9);
			\draw (13.5,0) -- (13.5,9);
			%Labels
			\node (mprime) at (10.5,9.75) {\(\mathbf{m}'\)};
			\node (2v-1) at (11,-0.5) {\(v-1\)};
			\node (2v) at (12,-0.5) {\(v\)};
			\node (2v+1) at (13,-0.5) {\(v+1\)};
			\node (2star1) at (13,1.5) {\(*\)};
			\node (2star2) at (12,1.5) {\(*\)};
			\node (2star3) at (11,3.5) {\(*\)};
			\node (2star4) at (12,3.5) {\(*\)};
			\node (2star5) at (13,6.5) {\(*\)};
			\node[scale=0.8] (2k-1) at (12,5.5) {\(k-1\)};
			\node[scale=0.8] (2k) at (12,6.5) {\(k\)};
			\node (2dots) at (14.25,4) {\(\cdots\)};
			
			%Picture 3
			%\draw[pattern=north east lines] (15,7) rectangle (20,9);
			% Horizontal lines
			%\draw (15,0) -- (21,0);
			%\draw (15,1) -- (21,1);
			%\draw (15,2) -- (21,2);
			%\draw (15,3) -- (21,3);
			%\draw (15,4) -- (21,4);
			%\draw (15,5) -- (21,5);
			%\draw (15,6) -- (21,6);
			%\draw (15,7) -- (21,7);
			%\draw (15,9) -- (21,9);
			% Vertical lines
			%\draw (15,0) -- (15,9);
			%\draw (18,0) -- (18,9);
			%\draw (19,0) -- (19,9);
			%\draw (20,0) -- (20,9);
			%\draw (21,0) -- (21,9);
			%Labels
			%\node (mprimeprime) at (18,9.75) {\(\mathbf{m}''\)};
			%\node (3v-1) at (18.5,-0.5) {\(v-1\)};
			%\node (3v) at (19.5,-0.5) {\(v\)};
			%\node (3v+1) at (20.5,-0.5) {\(v+1\)};
			%\node (3star1) at (20.5,1.5) {\(*\)};
			%\node (3star2) at (19.5,1.5) {\(*\)};
			%\node (3star3) at (20.5,3.5) {\(*\)};
			%\node (3star4) at (19.5,3.5) {\(*\)};
			%\node (3star5) at (20.5,6.5) {\(*\)};
			%\node[scale=0.8] (3k-2) at (19.5,5.5) {\(k-2\)};
			%\node[scale=0.8] (3k-1) at (19.5,6.5) {\(k-1\)};
			%\node (3dots) at (21.75,4) {\(\cdots\)};
		\end{tikzpicture}
	\end{center}
	and
	\begin{center}
		\begin{tikzpicture}[scale=0.65]
			\draw[pattern=north east lines, pattern color=gray] (0,7) rectangle (5,9);
			% Horizontal lines
			\draw (0,0) -- (6,0);
			\draw (0,1) -- (6,1);
			\draw (0,2) -- (6,2);
			\draw (0,3) -- (6,3);
			\draw (0,4) -- (6,4);
			\draw (0,5) -- (6,5);
			\draw (0,6) -- (6,6);
			\draw (0,7) -- (6,7);
			\draw (0,9) -- (6,9);
			% Vertical lines
			\draw (0,0) -- (0,9);
			\draw (3,0) -- (3,9);
			\draw (4,0) -- (4,9);
			\draw (5,0) -- (5,9);
			\draw (6,0) -- (6,9);
			%Labels
			\node (mprimeprime) at (3,9.75) {\(\mathbf{m}''\)};
			\node (rho+1) at (-1,5.5) {\(\rho+1\)};
			\node (jerho) at (-0.5,6.5) {\(\rho\)};
			\node (j) at (-0.5,1.5) {\(j\)};
			\node (jprime) at (-0.5,3.5) {\(j'\)};			
			\node (3v-1) at (3.5,-0.5) {\(v-1\)};
			\node (3v) at (4.5,-0.5) {\(v\)};
			\node (3v+1) at (5.5,-0.5) {\(v+1\)};
			\node (3star1) at (5.5,1.5) {\(*\)};
			\node (3star2) at (4.5,1.5) {\(*\)};
			\node (3star3) at (5.5,3.5) {\(*\)};
			\node (3star4) at (4.5,3.5) {\(*\)};
			\node (3star5) at (5.5,6.5) {\(*\)};
			\node[scale=0.8] (3k-2) at (4.5,5.5) {\(k-2\)};
			\node[scale=0.8] (3k-1) at (4.5,6.5) {\(k-1\)};
			\node (3dots) at (6.75,4) {\(\cdots\)};
		\end{tikzpicture}
	\end{center}
	where again the admissibilities of \(j, j'\) and \ref{Subsubsection.Consecutive-set} are used. In all three subcases, the \((v+1)\)-column of \(\diag^{(d)}(\mathbf{m}'')\) contains at least 2 boxes, which must include \(B_{\ell}^{(d)}(\mathbf{m}'')\) for
	\(\ell = k\) and \(k+1\). So \(\mathbf{m}'' \in \mathcal{W}(d,k)\) in case (d). The remaining \textbf{Case (e)} comes out by the arguments already used, and is left to the reader.
\end{proof}

\begin{Remarks} \label{Remarks.d-characteristic-sequences-of-sums}
	\begin{enumerate}[wide, label=(\roman*)]
		\item Let \(\mathbf{m}\) and \(\mathbf{m}'\) be as in Lemma \ref{Lemma.Boxes-of-sums-of-certain-vertices} and \(v \defeq v_{k}^{(d)}(\mathbf{m})\). Then \(v_{k}^{(d)}(\mathbf{m}') = v\) and 
		\[
			v_{k+1}^{(d)}(\mathbf{m}') = \begin{cases} v,	&\text{if } \mathbf{m} \in \mathcal{W}(d,k), \\ v+1,	&\text{if } \mathbf{m} \notin \mathcal{W}(d,k). \end{cases}
		\]
		\item Let \(\mathbf{m}\), \(\mathbf{m}'\) and \(\mathbf{m}''\) be as in Lemma \ref{Lemma.Membership-of-sums-of-vertices}, and suppose that \(\mathbf{m}'' \in \mathcal{W}(d,k)\). Then \(v_{k}^{(d)}(\mathbf{m}'') = v_{k}^{(d)}(\mathbf{m}) +1\).
		\item In the situation of Lemma \ref{Lemma.Membership-of-sums-of-vertices} the condition on \(j'\) that \(\mathbf{m}''\) belong to \(\mathcal{W}(d,k)\) does not depend on \(j\), but only on \(\mathbf{m}\).
		Here (i) and (ii) are immediate from the Lemmas, and (iii) comes out as the condition \(v-d < m_{j'}\) that occurs in Lemma \ref{Lemma.Membership-of-sums-of-vertices}(iii) is automatically fulfilled if \(\rho_{k}^{(d)}(\mathbf{m}) \leq j' < j\).
	\end{enumerate}
\end{Remarks}

\begin{Lemma} \label{Lemma.On-ordered-vertices}
	Let \(\mathbf{m}, \mathbf{n}\) be vertices of \(\mathcal{W}(d,k)\) with \(\mathbf{m} < \mathbf{n}\) and \(d(\mathbf{m}, \mathbf{n}) = 1\). Let \(S\) be the set \(S = \{ i \mid 1 \leq i < r \text{ and } m_{i} < n_{i}\}\) of cardinality \(s\). 
	Then there exists a sequence \(\mathbf{m} = \mathbf{m}^{(0)} < \mathbf{m}^{(1)} < \dots < \mathbf{m}^{(s)} = \mathbf{n}\) of elements of \(\mathcal{W}(\mathds{Z})\) such that all the \(\mathbf{m}^{(j)}\) with one possible 
	exception belong to \(\mathcal{W}(d,k)\). The exception holds if and only if \(v \defeq v_{k}^{(d)}(\mathbf{m}) < v_{k}^{(d)}(\mathbf{n})\). In that case, the unique \(\mathbf{m}^{(t)} \notin \mathcal{W}(d,k)\) is the minimal one 
	for which \(v_{k+1}^{(d)}(\mathbf{m}^{(t)}) = v+1\), and
	\begin{equation} \label{Eq.d-characteristic-sequences-of-sequence}
		v_{k}^{(d)}(\mathbf{m}^{(j)}) = \begin{cases} v,	&j \leq t, \\ v+1,	&j>t, \end{cases}, \qquad v_{k+1}^{(d)}(\mathbf{m}^{(j)}) = \begin{cases} v,	&j<t, \\ v+1,	&j \geq t. \end{cases}
	\end{equation}
\end{Lemma}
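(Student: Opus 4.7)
The plan is to leverage the monotonicity of \(v_k^{(d)}\) and \(v_{k+1}^{(d)}\) on \(\mathcal{W}(\mathds{Z})\) (Lemma~\ref{Lemma.d-characteristic-sequences-on-WIZ}). Since \(\mathbf{m} < \mathbf{n}\) are neighbors both lying in \(\mathcal{W}(d,k)\), we have \(v_k^{(d)}(\mathbf{n}) = v_{k+1}^{(d)}(\mathbf{n}) \in \{v, v+1\}\). For any monotonic sequence \(\mathbf{m} = \mathbf{m}^{(0)} < \cdots < \mathbf{m}^{(s)} = \mathbf{n}\) built by successively appending the \(\mathbf{e}_i\) with \(i \in S\) in an admissible order (such an order exists at every stage by taking the smallest remaining index, as follows from \(\mathbf{m}, \mathbf{n} \in \mathcal{W}\)), both \(v_k^{(d)}\) and \(v_{k+1}^{(d)}\) are squeezed between \(v\) and \(v_k^{(d)}(\mathbf{n})\), and \(\mathbf{m}^{(j)} \in \mathcal{W}(d,k)\) iff \(v_k^{(d)}(\mathbf{m}^{(j)}) = v_{k+1}^{(d)}(\mathbf{m}^{(j)})\).

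If \(v_k^{(d)}(\mathbf{n}) = v\), the squeeze forces \(v_k^{(d)}(\mathbf{m}^{(j)}) = v_{k+1}^{(d)}(\mathbf{m}^{(j)}) = v\) for every \(j\) and any admissible ordering, so no exception occurs. Conversely, if \(v_k^{(d)}(\mathbf{n}) = v+1\), safe additions (those landing in \(\mathcal{W}(d,k)\)) preserve \(v_k^{(d)}\) by Lemma~\ref{Lemma.Boxes-of-sums-of-certain-vertices} and Remark~\ref{Remarks.d-characteristic-sequences-of-sums}(i), so at least one exception is unavoidable.

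For \(v_k^{(d)}(\mathbf{n}) = v+1\) the construction is greedy. First I would append admissible safe indices from the currently remaining set as long as possible; this process must stall at some \(\mathbf{p} \defeq \mathbf{m}^{(t-1)} \in \mathcal{W}(d,k)\) with \(v_k^{(d)}(\mathbf{p}) = v\) and every remaining \(i \in S'\) dangerous from \(\mathbf{p}\). Corollary~\ref{Corollary.Condition-for-k-capped} forces \(\mathbf{p}\) to be \(k\)-capped and each such \(i\) to satisfy \(i \geq \rho \defeq \rho_k^{(d)}(\mathbf{p})\). Next I choose any admissible \(i \in S'\) and set \(\mathbf{m}^{(t)} \defeq \mathbf{p} + \mathbf{e}_i \notin \mathcal{W}(d,k)\); this is the advertised exception, with \(v_{k+1}^{(d)}(\mathbf{m}^{(t)}) = v+1\). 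Since \(|S'| \geq 2\) (else \(\mathbf{m}^{(t)} = \mathbf{n}\) would contradict \(\mathbf{n} \in \mathcal{W}(d,k)\)), I pick any admissible \(i' \in S' \setminus \{i\}\) from \(\mathbf{m}^{(t)}\). By Lemma~\ref{Lemma.Membership-of-sums-of-vertices} combined with Remark~\ref{Remarks.d-characteristic-sequences-of-sums}(iii) the column break is repaired: \(\mathbf{m}^{(t+1)} \defeq \mathbf{m}^{(t)} + \mathbf{e}_{i'} \in \mathcal{W}(d,k)\) with \(v_k^{(d)}(\mathbf{m}^{(t+1)}) = v+1\).

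The remaining \(S' \setminus \{i, i'\}\) may then be appended in any admissible order: for every \(\mathbf{q}\) in the sub-cube \([\mathbf{m}^{(t+1)}, \mathbf{n}]\) the squeeze \(v+1 = v_k^{(d)}(\mathbf{m}^{(t+1)}) \leq v_k^{(d)}(\mathbf{q}) \leq v_k^{(d)}(\mathbf{n}) = v+1\) (and likewise for \(v_{k+1}^{(d)}\)) forces \(\mathbf{q} \in \mathcal{W}(d,k)\). The formulas~\eqref{Eq.d-characteristic-sequences-of-sequence} then follow by direct bookkeeping. The main technical obstacle I foresee is verifying that every dangerous \(i' \in S'\) automatically satisfies the repair criterion of Lemma~\ref{Lemma.Membership-of-sums-of-vertices} (i.e.\ \(i' \geq \rho\) together with \(p_{i'} > v-d\)); this rests on combining the classification of dangerous indices in Lemma~\ref{Lemma.Boxes-of-sums-of-certain-vertices} (for \(i' > \rho\) one gets \(v-d < p_{i'} < v\), and for \(i' = \rho\) the inequality \(v-d < p_\rho\) comes from \((\rho,v)\) being a \(d\)-box of \(\mathbf{p}\)) with the \(j\)-independence observation of Remark~\ref{Remarks.d-characteristic-sequences-of-sums}(iii).
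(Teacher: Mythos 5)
Your argument is correct in outline, and it organizes the work differently from the paper. The paper fixes the ordering once and for all: the \(\mathbf{e}_i\), \(i \in S\), are appended in increasing order of \(i\) (this is the canonical sequence referred to after the lemma), and the proof is an induction on \(s\) whose only nontrivial step is to rule out that both \(\mathbf{m}^{(1)}\) and \(\mathbf{m}^{(2)}\) fall outside \(\mathcal{W}(d,k)\). In that fixed ordering the second index \(j'\) need not be dangerous for \(\mathbf{m}\) --- it may simply satisfy \(m_{j'} \leq v-d\), which is a \emph{safe} index by Lemma \ref{Lemma.Boxes-of-sums-of-certain-vertices}(i) but fails to repair the column break --- so the paper cannot argue that the repair is automatic; instead it derives a contradiction: \(m_{j'} \leq v-d\) forces \(m_i \leq v-d\) for all later \(i \in S\), the break propagates unchanged to \(\mathbf{n}\), contradicting \(\mathbf{n} \in \mathcal{W}(d,k)\). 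Your greedy reordering (postpone the dangerous step until no admissible safe index remains) buys exactly what the paper lacks at that point: the classification of dangerous indices then guarantees \(i' \geq \rho\) and \(v - d < p_{i'}\) for the repair index, so Lemma \ref{Lemma.Membership-of-sums-of-vertices} applies positively and no contradiction is needed. The price is that you obtain only \emph{some} admissible ordering rather than the increasing one; that suffices for the statement as written and for its later uses.

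Two points need to be closed. First, at the stall you assert that \emph{every} remaining \(i \in S'\) is dangerous from \(\mathbf{p}\); only those admissible for \(\mathbf{p}\) are classified at all, so this should read \enquote{every remaining index admissible for \(\mathbf{p}\) is dangerous} (the minimum of \(S'\) is always admissible, which is all you need). Second, and more substantively, the repair index \(i'\) is chosen admissible for \(\mathbf{m}^{(t)} = \mathbf{p} + \mathbf{e}_i\) but need not be admissible for \(\mathbf{p}\); in that case it is not covered by your classification of dangerous indices, so \(v-d < p_{i'}\) is not yet justified. This can only happen when \(i' = i+1\) and \(p_{i'} = p_i\), and then \(v-d < p_i = p_{i'}\) follows because \(i\) itself is dangerous (Lemma \ref{Lemma.Boxes-of-sums-of-certain-vertices}(ii) if \(i > \rho\), or the fact that \((\rho,v) = B_{k+1}^{(d)}(\mathbf{p})\) is a \(d\)-box of \(\mathbf{p}\) if \(i = \rho\)). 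With these repairs your proof is complete.
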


First note that
\begin{itemize}
	\item \(v_{k}^{(d)}(\mathbf{n}) = v_{k+1}^{(d)}(\mathbf{n}) = v\) or \(v+1\);
	\item for \(i \in S\), \(n_{i} = m_{i} +1\), so \(\mathbf{n} = \mathbf{m} + \sum_{i \in S} \mathbf{e}_{i}\);
	\item \(i \in S\) is admissible for \(\mathbf{m} + \sum_{j \in S, j<i} \mathbf{e}_{j}\).
\end{itemize}
This gives \eqref{Eq.d-characteristic-sequences-of-sequence} as soon as the preceding assertion is shown. For this, we use induction on \(s\). The case \fbox{\(s=1\)} is Remark \ref{Remarks.d-characteristic-sequences-of-sums}(i). Let \fbox{\(s=2\)}, \(S = \{i,j\}\) with \(i<j\), write 
\(\mathbf{m}^{(1)} \defeq \mathbf{m} + \mathbf{e}_{i}\), and suppose \fbox{\(v_{k}^{(d)}(\mathbf{n}) = v\)}. By Lemma \ref{Lemma.d-characteristic-sequences-on-WIZ}, \(v_{k}^{(d)}(\mathbf{m}^{(1)}) = v_{k+1}^{(d)}(\mathbf{m}^{(1)}) = v\), so
\(\mathbf{m}^{(1)} \in \mathcal{W}(d,k)\). If \(v_{k}^{(d)}(\mathbf{n}) = v+1\) then, again by Remark \ref{Remarks.d-characteristic-sequences-of-sums}(i), \(\mathbf{m}^{(1)} \notin \mathcal{W}(d,k)\) and \(v_{k+1}^{(d)}(\mathbf{m}^{(1)}) = v+1\).

Suppose \fbox{\(s > 2\)} and that the assertion holds for \(s' < s\). Let \(j < j' < j'' \dots\) be the elements of \(S\) and \(\mathbf{m}^{(1)} \defeq \mathbf{m} + \mathbf{e}_{j}\), \(\mathbf{m}^{(2)} \defeq \mathbf{m}^{(1)} + \mathbf{e}_{j'}\) \dots
We are done if \(\mathbf{m}^{(1)}\) or \(\mathbf{m}^{(2)} \in \mathcal{W}(d,k)\), applying the induction hypothesis. Assume that neither of \(\mathbf{m}^{(1)}\), \(\mathbf{m}^{(2)}\) belongs to \(\mathcal{W}(d,k)\). By Corollary \ref{Corollary.Condition-for-k-capped}, 
\(j \geq \rho \defeq \rho_{k}^{(d)}(\mathbf{m})\), and by Lemma \ref{Lemma.Membership-of-sums-of-vertices}(iii), \(m_{j'} \leq v-d\), which implies 
\begin{equation} \label{Eq.Condition-for-mi}
	m_{i} \leq v-d \text{ for } i \geq j', \text{ in particular for } i=j'',j''', \dots
\end{equation}
By Remark \ref{Remarks.d-characteristic-sequences-of-sums}(i),
\[
	v(B_{k}^{(d)}(\mathbf{m}^{(1)})) = v, \qquad v(B_{k+1}^{(d)}(\mathbf{m}^{(1)})) = v+1,
\]
that is, the number \(k\), but not \(k+1\) occurs as a box number in the \(v\)-column of \(\diag^{(d)}(\mathbf{m}^{(1)})\). As in the case for \(i = j'\), successively adding \(\mathbf{e}_{i}\) (\(i = j'', j''', \dots\)), i.e., replacing 
\(\mathbf{m}^{(1)}\) with \(\mathbf{m}^{(2)}\), \(\mathbf{m}^{(3)}\), \dots doesn't change the box numbers in the \(v\)-th and \((v+1)\)-th column of \(\diag^{(d)}\) due to \eqref{Eq.Condition-for-mi}. Hence finally
\[
	v(B_{k}^{(d)}(\mathbf{m}^{(s)})) \neq v(B_{k+1}^{(d)}(\mathbf{m}^{(s)}))
\]
with \(\mathbf{m}^{(s)} = \mathbf{n}\), which contradicts \(\mathbf{n} \in \mathcal{W}(d,k)\). Therefore, \(\mathbf{m}^{(1)}\) or \(\mathbf{m}^{(2)} \in \mathcal{W}(d,k)\) as wanted, and the proof is complete. ~\hfill \mbox{\(\square\)}

\begin{Lemma} \label{Lemma.On-vertices-and-simplices}
	Let \(\sigma\) be a simplex in \(\mathcal{W}(d,k)\). Put \(\mathbf{m} \defeq \min \sigma\), \(\mathbf{n} \defeq \max \sigma\), \(v \defeq v_{k}^{(d)}(\mathbf{n})\), \(\rho \defeq \rho_{k}^{(d)}(\mathbf{n})\). Suppose that either 
	\begin{enumerate}[label=\(\mathrm{(\alph*)}\)]
		\item there is no simplex \(\sigma' \subset \mathcal{W}(d,k)\) with \(\sigma \subset \sigma'\), \(\min \sigma' = \mathbf{m}\), and such that \(\mathbf{n} < \mathbf{n}' \defeq \max \sigma\), or
		\item there is no simplex \(\sigma' \subset \mathcal{A}(d,k)\) with \(\sigma \subset \sigma'\), \(\max \sigma' = \mathbf{n}\), and such that \(\mathbf{m}' \defeq \min \sigma' < \mathbf{m}\), and moreover, \(v < d\)
	\end{enumerate}
	holds. Then either \(\mathbf{n} = \mathbf{m} + \mathbf{y}\) or \(\mathbf{n} = \mathbf{m} + \mathbf{y} - \mathbf{e}_{i}\) for some \(i\) with \(\rho \leq i < r\). In the latter case, the function \(v_{k}^{(d)}\) is constant on
	\(\sigma\), \(\rho_{k}^{(d)}(\mathbf{m}) = r-1\), and \(\mathbf{n}\) is \(k\)-capped.
\end{Lemma}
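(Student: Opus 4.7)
The plan is to exploit the containment $\sigma \subseteq [\mathbf{m}, \mathbf{m}+\mathbf{y}]$ furnished by \ref{Subsubsection.Vertices-are-neighbors}, so that $\mathbf{n} \in [\mathbf{m}, \mathbf{m}+\mathbf{y}]$ and $\mathbf{m} \in [\mathbf{n}-\mathbf{y}, \mathbf{n}]$. Setting $S' \defeq \{i \mid n_i > m_i\}$ and $T \defeq \{1, \dots, r-1\} \setminus S' = \{i \mid n_i = m_i\}$, an upward extension of $\sigma$ with unchanged minimum amounts to choosing a non-empty $T' \subseteq T$ and passing to $\mathbf{n} + \sum_{i \in T'} \mathbf{e}_i$, while a downward extension with unchanged maximum amounts to choosing a non-empty $J \subseteq T$ and passing to $\mathbf{m} - \sum_{i \in J} \mathbf{e}_i$. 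If $T = \varnothing$ then $\mathbf{n} = \mathbf{m}+\mathbf{y}$, the first case of the conclusion; so I assume $T \neq \varnothing$ throughout.

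For hypothesis (a), non-extendability forces $\mathbf{m}+\mathbf{y} \notin \mathcal{W}(d,k)$, and Proposition \ref{Proposition.Sums-of-vertices}(ii) applied to $\mathbf{m}$ with $i = r-1$ then gives $\rho_k^{(d)}(\mathbf{m}) = r-1$. Let $i_0 \defeq \min T$. The inclusion $\{1, \dots, i_0-1\} \subseteq S'$ yields $n_{i_0 - 1} = m_{i_0 - 1}+1 > m_{i_0} = n_{i_0}$, so $i_0$ is admissible for $\mathbf{n}$, and Corollary \ref{Corollary.Condition-for-k-capped} applied to $\mathbf{n}+\mathbf{e}_{i_0} \notin \mathcal{W}(d,k)$ delivers $i_0 \geq \rho$ and the $k$-cappedness of $\mathbf{n}$. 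To exclude $|T| \geq 2$ --- the main obstacle --- I let $i_1$ be the second-smallest element of $T$ and argue that $\mathbf{n}'' \defeq \mathbf{n} + \mathbf{e}_{i_0} + \mathbf{e}_{i_1}$ lies in $\mathcal{W}(d,k)$ with $\mathbf{n} < \mathbf{n}'' \leq \mathbf{m}+\mathbf{y}$, contradicting (a). The verification calls for a direct comparison of the $d$-diagrams of $\mathbf{n}$ and $\mathbf{n}''$, splitting into the subcases $i_0 = \rho$ versus $i_0 > \rho$ and $i_1 = i_0+1$ versus $i_1 > i_0+1$ in the style of Lemmas \ref{Lemma.Boxes-of-sums-of-certain-vertices} and \ref{Lemma.Membership-of-sums-of-vertices}; the point in each subcase will be that the simultaneous shift of rows $i_0$ and $i_1$ repairs the column break caused by shifting row $i_0$ alone, placing both $B_k^{(d)}(\mathbf{n}'')$ and $B_{k+1}^{(d)}(\mathbf{n}'')$ in the $(v+1)$-column.

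Once $|T| = 1$ is established, yielding $\mathbf{n} = \mathbf{m}+\mathbf{y} - \mathbf{e}_{i_0}$ with $i_0 \geq \rho$, the constancy of $v_k^{(d)}$ on $\sigma$ follows from Lemma \ref{Lemma.On-ordered-vertices}: were $v_k^{(d)}(\mathbf{m}) < v$, the unique exceptional vertex in a chain from $\mathbf{m}$ to $\mathbf{n}$ would be produced by adding some $\mathbf{e}_j$ with $j \in S'$, and combining $\rho_k^{(d)}(\mathbf{m}) = r-1$ with the $k$-cappedness of $\mathbf{n}$ at $\rho \leq i_0$ will exclude this. For hypothesis (b), the strategy is dual: $\mathbf{n}-\mathbf{y} \notin \mathcal{A}(d,k)$ plays the role of $\mathbf{m}+\mathbf{y} \notin \mathcal{W}(d,k)$, and the assumption $v < d$ allows Corollary \ref{Corollary.Membership-of-AIZ}(ii) to re-encode (b) as $\mathbf{n}-\mathbf{y} \in \mathcal{A}(d, k-1) \setminus \mathcal{A}(d, k)$; a parallel $d$-diagram analysis then produces the same constraints on $T$ and the same conclusion, the main obstacle again being the meticulous two-shift bookkeeping in the diagram.
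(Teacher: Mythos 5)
Your route through Case (a) is genuinely different from the paper's: where the paper first proves that \(v_{k}^{(d)}\) is constant on the whole interval \([\mathbf{m},\mathbf{n}]\) and then counts steps using the fact that \(\rho_{k}^{(d)}\) drops by at most \(1\) per step while it must fall from \(\rho_{k}^{(d)}(\mathbf{m})=r-1\) to \(\rho\), you instead try to exhibit a forbidden upward extension \(\mathbf{n}''=\mathbf{n}+\mathbf{e}_{i_{0}}+\mathbf{e}_{i_{1}}\) directly. The reductions you do carry out (hypothesis (a) \(\Leftrightarrow\) no \(\mathbf{n}+\sum_{T'}\mathbf{e}_{i}\) lies in \(\mathcal{W}(d,k)\); \(\rho_{k}^{(d)}(\mathbf{m})=r-1\) via Proposition \ref{Proposition.Sums-of-vertices}(ii); admissibility of \(i_{0}=\min T\) and the consequences of Corollary \ref{Corollary.Condition-for-k-capped}) are correct and in fact slicker than the paper's steps (iii), (iv), (viii).

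The gap is in the central claim that the two-shift \(\mathbf{n}\leadsto\mathbf{n}''\) always \enquote{repairs the column break}. That claim is exactly Lemma \ref{Lemma.Membership-of-sums-of-vertices}(iii) with \(j=i_{0}<j'=i_{1}\), and that lemma is \emph{conditional}: \(\mathbf{n}''\in\mathcal{W}(d,k)\) if and only if \(v-d<n_{i_{1}}\), i.e.\ if and only if \((i_{1},v)\) is a \(d\)-box of \(\mathbf{n}\). Your list of subcases (\(i_{0}=\rho\) vs.\ \(i_{0}>\rho\), \(i_{1}=i_{0}+1\) vs.\ \(i_{1}>i_{0}+1\)) does not include the one that matters, namely whether \((i_{1},v)\) is a box, and in the non-box case the break persists and no contradiction is obtained. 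With only what you have established at that point (\(v_{k}^{(d)}(\mathbf{m})\in\{v-1,v\}\) by Lemma \ref{Lemma.d-characteristic-sequences-on-WIZ}, and \(\rho_{k}^{(d)}(\mathbf{m})=r-1\), which forces \(v_{k}^{(d)}(\mathbf{m})<d\) since \(B_{k}^{(d)}(\mathbf{m})=(r,v_{k}^{(d)}(\mathbf{m}))\) must be a box), the configuration \(v=d\), \(v_{k}^{(d)}(\mathbf{m})=d-1\), \(n_{i_{1}}=0\) is not excluded, and there the condition of Lemma \ref{Lemma.Membership-of-sums-of-vertices}(iii) fails. The missing idea is the paper's step (ii): establish \emph{first} that \(v_{k}^{(d)}(\mathbf{a})=v_{k+1}^{(d)}(\mathbf{a})=v\) for all \(\mathbf{m}\leq\mathbf{a}\leq\mathbf{n}\), because otherwise \(v_{k}^{(d)}(\mathbf{m}+\mathbf{y})=v_{k+1}^{(d)}(\mathbf{m}+\mathbf{y})=v\) by the squeeze from Lemma \ref{Lemma.d-characteristic-sequences-on-WIZ}, putting \(\mathbf{m}+\mathbf{y}\) into \(\mathcal{W}(d,k)\) against (a). Then \(v_{k}^{(d)}(\mathbf{m})=v\) together with \(\rho_{k}^{(d)}(\mathbf{m})=r-1\) gives \(v<d\), the condition \(v-d<n_{i_{1}}\) becomes automatic, and your repair argument closes. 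This reordering also replaces your vague closing argument for the constancy of \(v_{k}^{(d)}\) on \(\sigma\), which as written rests on an unexplained combination of \(\rho_{k}^{(d)}(\mathbf{m})=r-1\) with \(k\)-cappedness rather than on the extension criterion. Finally, Case (b) cannot be dismissed as \enquote{dual}: the downward moves \(\mathbf{m}-\mathbf{e}_{j}\) may leave \(\mathcal{W}\), so Lemmas \ref{Lemma.Boxes-of-sums-of-certain-vertices} and \ref{Lemma.Membership-of-sums-of-vertices} do not apply there, and one must argue directly on \(d\)-diagrams inside \(\mathcal{A}\) as the paper does; your sketch does not engage with this asymmetry.
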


\begin{Remark}
	Although the conclusion is the same for cases (a) and (b) of the Lemma, the requirements are different and not perfectly dual. In (b) we admit \(\sigma' \subset \mathcal{A}(d,k)\), and we have the supplementary requirement
	\(v < d\). Correspondingly, the proofs in cases (a), (b) are similar, but not dual to each other. While we manage with the preceding lemmas in case (a), we have to improvise and argue directly with \(d\)-diagrams in case (b).
\end{Remark}

\begin{proof}[Proof of Lemma \ref{Lemma.On-vertices-and-simplices}, Case (a)]
	\begin{enumerate}[wide, label=(\roman*)]
		\item We have \(\mathbf{m} \leq \mathbf{n} \leq \mathbf{m} + \mathbf{y}\). Suppose that 
		\begin{equation} \label{Eq.Assumption-on-order}
			\mathbf{n} < \mathbf{m} + \mathbf{y}.
		\end{equation}
		By the maximality property of \(\sigma\) and Proposition \ref{Proposition.Sums-of-vertices}(ii),
		\begin{equation}
			\rho_{k}^{(d)}(\mathbf{m}) = r-1.
		\end{equation}
		\item For each vertex \(\mathbf{a}\) with \(\mathbf{m} \leq \mathbf{a} \leq \mathbf{n}\),
		\begin{equation} \label{Eq.d-characteristic-sequences-of-sandwich}
			v_{k}^{(d)}(\mathbf{a}) = v_{k+1}^{(d)}(\mathbf{a}) = v;
		\end{equation}
		otherwise by Lemma \ref{Lemma.d-characteristic-sequences-on-WIZ}, \(v_{k}^{(d)}(\mathbf{m}) + 1 = v_{k+1}^{(d)}(\mathbf{m}) + 1 = v = v_{k}^{(d)}(\mathbf{m} + \mathbf{y}) = v_{k+1}^{(d)}(\mathbf{m} + \mathbf{y})\), which would imply 
		\(\mathbf{m} + \mathbf{y} \in \mathcal{W}(d,k)\).
		\item We show that
		\begin{equation} \label{Eq.Condition-for-nj-and-mj}
			n_{j} = m_{j}+1 \quad \text{for } 1 \leq j < \rho.
		\end{equation}
		For, take the least \(j < \rho\) that violates \eqref{Eq.Condition-for-nj-and-mj}. Then \(n_{j} = m_{j}\) and \(j\) is admissible for \(\mathbf{n}\) (for \(j > 1\), \(n_{j} = m_{j} \leq m_{j-1} = n_{j-1} -1\)), so \(\mathbf{n}' \defeq \mathbf{n} + \mathbf{e}_{j} \in \mathcal{W}\) and
		\(\mathbf{n}' \leq \mathbf{m} + \mathbf{y}\). By Lemma \ref{Lemma.Boxes-of-sums-of-certain-vertices}, \(\mathbf{n}'\) even belongs to \(\mathcal{W}(d,k)\), which conflicts with the maximality property of \(\sigma\).
		\item Similarly, if \(n_{\rho} = m_{\rho}\) then \(\rho\) is admissible for \(\mathbf{n}\), and Lemma \ref{Lemma.Boxes-of-sums-of-certain-vertices}(iii) gives that \(\mathbf{n}\) is \(k\)-capped. Hence
		\begin{equation} \label{Eq.Either}
			\text{\textbf{either} } n_{\rho} = m_{\rho} \text{ and } \mathbf{n} \text{ is \(k\)-capped \textbf{or} } n_{\rho} = m_{\rho} +1.
		\end{equation}
		\item Consider the sequence
		\[
			\mathbf{m} = \mathbf{m}^{(0)} < \mathbf{m}^{(1)} < \dots < \mathbf{m}^{(\rho-1)} \eqdef \mathbf{a} \leq \mathbf{n} < \mathbf{m} + \mathbf{y},
		\]
		where \(\mathbf{m}^{(j)} \defeq \mathbf{m}^{(j-1)} + \mathbf{e}_{j}\) for \( 1 \leq j < \rho\). Lemma \ref{Lemma.Boxes-of-sums-of-certain-vertices} guarantees that all the \(\mathbf{m}^{(j)}\) are in \(\mathcal{W}(d,k)\), 
		where \(v_{k}^{(d)}\) and \(\rho_{k}^{(d)}\) are constant along \(\mathbf{m}^{(0)}, \dots, \mathbf{m}^{(\rho-1)}\). Therefore,
		\begin{equation} \label{Eq.rho-of-sandwich}
			\rho_{k}^{(d)}(\mathbf{a}) = \rho_{k}^{(d)}(\mathbf{m}) = r-1.
		\end{equation}
		\item Let \(S = \{ j_{1} < j_{2} < \dots < j_{s}\}\) be the set of those \(j\) (\(\rho \leq j < r\)) such that \(n_{j} = a_{j} + 1\) (note that \(a_{j} = m_{j}\) for these \(j\)). Consider the analogous sequence
		\( \mathbf{a} = \mathbf{a}^{(0)} < \mathbf{a}^{(1)} < \dots < \mathbf{a}^{(s)} = \mathbf{n}\) with \(\mathbf{a}^{(t)} \defeq \mathbf{a}^{(t-1)} + \mathbf{e}_{j_{t}}\) (\(1 \leq t \leq s\)). From \eqref{Eq.Assumption-on-order}, \(s\) 
		must be less than \( (r-1) - (\rho -1)\), i.e., \(s \leq r-\rho-1\). Again, \(j_{t}\) is admissible for \(\mathbf{a}^{(t-1)}\), so all the \(\mathbf{a}^{(t)}\) belong to \(\mathcal{W}\), and by \eqref{Eq.d-characteristic-sequences-of-sandwich} 
		even to \(\mathcal{W}(d,k)\). By Lemma \ref{Lemma.Boxes-of-sums-of-certain-vertices}, the function \(\rho_{k}^{(d)}\) decreases at most by 1 in each step \(\mathbf{a}^{(t-1)} < \mathbf{a}^{(t)}\), which gives 
		\(s \geq \rho_{k}^{(d)}(\mathbf{a}) - \rho_{k}^{(d)}(\mathbf{n}) = r-\rho-1\). Thus
		\begin{equation} \label{Eq.s-equals-r-rho-1}
			s = r-\rho-1.
		\end{equation}
		\item Combining \eqref{Eq.Condition-for-nj-and-mj} and \eqref{Eq.s-equals-r-rho-1} we find \(n_{j} = m_{j} + 1\) for all \(j\) (\(1 \leq j < r\)) with one exception \(i\), for which \(\rho \leq i < r\) holds. This gives the largest part of the assertion.
		\item Suppose \(\mathbf{n}\) is not \(k\)-capped. Then by \eqref{Eq.Either}, \(n_{\rho} = m_{\rho}+1\). Let \(i > \rho\) be the exceptional index with \(n_{i} = m_{i}\). It is admissible for \(\mathbf{n}\) and 
		\(\mathbf{n}' = \mathbf{n} + \mathbf{e}_{i} \in \mathcal{W}(d,k)\) by Lemma \ref{Lemma.Boxes-of-sums-of-certain-vertices}, which however conflicts with the maximality assumption on \(\mathbf{n}\). Hence \(\mathbf{n}\) is \(k\)-capped, and all claimed properties are shown.
	\end{enumerate}
\end{proof}

\begin{proof}[Proof of Lemma \ref{Lemma.On-vertices-and-simplices}, Case (b)]
	\begin{enumerate}[wide, label=(\roman*)]
		\item We have \(\mathbf{n} - \mathbf{y} \leq \mathbf{m} \leq \mathbf{y}\). Suppose that
		\begin{equation} \label{Eq.Assumption-on-order-2}
			\mathbf{n} - \mathbf{y} < \mathbf{m}.
		\end{equation}
		By the maximality property of \(\sigma\), \(\mathbf{n} - \mathbf{y} \notin \mathcal{A}(d,k)\), so by Corollary \ref{Corollary.Membership-of-AIZ}(ii) and the condition \(v < d\), \(\mathbf{n} \notin \mathcal{W}(d,k+1)\), that is, \(\mathbf{n}\) is \(k\)-capped.
		\item For each vertex \(\mathbf{a}\) of \(\mathcal{A}\) with \(\mathbf{m} \leq \mathbf{a} \leq \mathbf{n}\),
		\begin{equation} \label{Eq.d-characteristic-sequence-of-sandwich-2}
			v_{k}^{(d)}(\mathbf{a}) = v_{k+1}^{(d)}(\mathbf{a}) = v,
		\end{equation}
		since otherwise (by Lemma \ref{Lemma.d-characteristic-sequences-on-WIZ} and \ref{Subsubsection.Validity-of-lemma-for-AIZ}), \(v-1 = v_{k}^{(d)}(\mathbf{m}) = v_{k+1}^{(d)}(\mathbf{m}) = v_{k}^{(d)}(\mathbf{n} - \mathbf{y}) = v_{k+1}^{(d)}(\mathbf{n} - \mathbf{y})\), which would imply 
		\(\mathbf{n} - \mathbf{y} \in \mathcal{A}(d,k)\).
		\item For \(1 \leq j < \rho\),
		\begin{equation} \label{Eq.mj-equals-nj-1}
			m_{j} = n_{j} - 1
		\end{equation}
		holds. Viz, suppose to the contrary that \(m_{j} = n_{j}\) for some \(j\). As \(\mathbf{n}\) is \(k\)-capped, \(n_{j} > v\). Then \(B_{\ell}^{(d)}\) isn't changed for \(\ell=k,k+1\) under 
		\(\mathbf{m} \leadsto \mathbf{m}' = \mathbf{m} - \mathbf{e}_{j}\), as a view to the picture for \(\diag^{(d)}(\mathbf{m})\) shows:
		\begin{center}
			\begin{tikzpicture}[scale=0.5]
				\draw[pattern=north east lines, pattern color=gray] (0,6) rectangle (4,7);
				\draw[pattern=north east lines, pattern color=gray] (0,7) rectangle (5,9);
				%Horizontal lines
				\draw (0,0) -- (5,0);
				\draw (0,2) -- (5,2);
				\draw (0,3) -- (5,3);
				\draw (0,4) -- (5,4);
				\draw (0,5) -- (5,5);
				\draw (0,6) -- (5,6);
				\draw (0,7) -- (5,7);
				\draw (0,8) -- (5,8);
				\draw (0,9) -- (5,9);
				%Veritcal lines
				\draw (0,0) -- (0,9);
				\draw (4,0) -- (4,9);
				\draw (5,0) -- (5,9);
				% Nodes
				% Row labels
				\node (rr) at (-0.5,0.5) {\(r\)};
				\node (rrho+1) at (-1,2.5) {\(\rho'+1\)};
				\node (rrhoprime) at (-0.5,3.5) {\(\rho'\)};
				\node (rho) at (-0.5,5.5) {\(\rho\)};
				\node (rj) at (-0.5,7.5) {\(j\)};
				% Column labels
				\node (cv) at (4.5,-0.5) {\(v\)};
				%Entries
				\node[scale=0.65] (k) at (4.5, 2.5) {\(k\)};
				\node[scale=0.65] (k+1) at (4.5,3.5) {\(k+1\)};
				\node (star1) at (4.5,0.25) {\(*\)};
				\node (star2) at (4.5, 1.75) {\(*\)};
				\node (dots) at (4.5, 1.25) {\(\vdots\)};
				\node (cdots) at (5.5,4.5) {\(\cdots\)};
				\node (label) at (11,0) {\(\text{Here, } \rho' \defeq \rho_{k}^{(d)}(\mathbf{m}) \geq \rho.\)};
			\end{tikzpicture}
		\end{center}
		Therefore, \(\mathbf{m}' \in \mathcal{A}(d,k)\), which is impossible as \(\mathbf{y} - \mathbf{n} \leq \mathbf{m}' < \mathbf{m}\).
		\item Suppose that \(\rho' \defeq \rho_{k}^{(d)}(\mathbf{m}) < r-1\). Then even
		\begin{equation} \label{Eq.mj-equals-nj-1-2}
			m_{j} = n_{j} - 1 \quad \text{for all} \quad j < \rho'
		\end{equation}
		holds. Otherwise, let \(j\) satisfy \(\rho \leq j < \rho'\) and \(m_{j} = n_{j}\), and put as usual \(\mathbf{m}' \defeq \mathbf{m} - \mathbf{e}_{j}\). Then the box numbers in the \(v\)-column of \(\diag^{(d)}\) increase by 1 upon 
		\(\mathbf{m} \leadsto \mathbf{m}'\). As there are at least 3 such boxes with indices larger or equal to \(\rho'\) in \(\diag^{(d)}(\mathbf{m})\), we see that \(\mathbf{m}' \in \mathcal{A}(d,k)\), which is forbidden.
		\item Still under the assumption \(\rho' < r-1\), we find
		\begin{equation} \label{Eq.mj-equals-nj-1-3}
			m_{j} = n_{j} -1 \quad \text{for} \quad j \geq \rho'.
		\end{equation}
		For, assume \(m_{j} = n_{j}\) for \(j\) with \(\rho \leq j < r\). Then the set of box numbers in the \(v\)-column of \(\diag^{(d)}(\mathbf{m})\) includes \(k+1,k, \dots, k+\rho'-r+1\). Upon 
		\(\mathbf{m} \leadsto \mathbf{m}' \defeq \mathbf{m} - \mathbf{e}_{j}\),
		\begin{description}
			\item[either] these boxes remain boxes of \(\diag^{(d)}(\mathbf{m}')\), with numbers increased by 1,
			\item[or] (if \(m_{j} = n_{j} = 0\) and \(v = d-1\)), the box \( (j,d-1)\) becomes a non-box in \(\diag^{(d)}(\mathbf{m}')\), in which case the set of box numbers in the \(v\)-th column of \(\diag^{(d)}(\mathbf{m}')\) includes at least
			\(k+1, \dots, k+\rho'-r+2\).
		\end{description}
		In both cases, \(v(B_{k}^{(d)}(\mathbf{m}')) = v(B_{k+1}^{(d)}(\mathbf{m}')) = v\), so \(\mathbf{m}' \in \mathcal{A}(d,k)\), which cannot happen. Therefore, \(m_{j} = n_{j}-1\).
		\item Combining \eqref{Eq.mj-equals-nj-1-2} and \eqref{Eq.mj-equals-nj-1-3} we find that under the assumption \(\rho' < r-1\), \(m_{j} = n_{j} - 1\) for all \(j\) with \(1 \leq j < r\). As this contradicts \eqref{Eq.Assumption-on-order-2}, 
		we find
		\begin{equation}
			\rho' = \rho_{k}^{(d)}(\mathbf{m}) = r-1.
		\end{equation}
		\item Define the sequence \(\mathbf{n} = \mathbf{n}^{(0)} > \mathbf{n}^{(1)} > \dots > \mathbf{n}^{(\rho-1)} \eqdef \mathbf{a} \geq \mathbf{m}\) by 
		\[
			\mathbf{n}^{(j)} \defeq \mathbf{n}^{(j-1)} - \mathbf{e}_{\rho -j} \qquad (1 \leq j < \rho).
		\]
		By \eqref{Eq.d-characteristic-sequence-of-sandwich-2}, \(\mathbf{n}^{(j)} \in \mathcal{A}(d,k)\) for each \(j\), and the definition is such that \(\mathbf{n}^{(j)} \in \mathcal{W}\), so \(\mathbf{n}^{(j)} \in \mathcal{W}(d,k)\). 
		Another view to \(\diag^{(d)}(\mathbf{n})\)
		\begin{center}
			\begin{tikzpicture}[scale=0.65]
				\draw[pattern=north east lines, pattern color=gray] (0,4) rectangle (5,6);
				%Horizontal lines
				\draw (0,0) -- (5,0);
				\draw (0,2) -- (5,2);
				\draw (0,3) -- (5,3);
				\draw (0,4) -- (5,4);
				\draw (0,6) -- (5,6);
				%Vertical lines
				\draw (0,0) -- (0,6);
				\draw (4,0) -- (4,6);
				\draw (5,0) -- (5,6);
				% Labels
				% Row labels
				\node (rrho+1) at (-1,2.5) {\(\rho+1\)};
				\node (rrho) at (-0.5,3.5) {\(\rho\)};
				%Column labels
				\node (cv) at (4.5,-0.5) {\(v\)};
				% Entries
				\node[scale=0.65] (k) at (4.5, 2.5) {\(k\)};
				\node[scale=0.65] (k+1) at (4.5,3.5) {\(k+1\)};
				\node (star1) at (4.5,0.25) {\(*\)};
				\node (star2) at (4.5, 1.75) {\(*\)};
				\node (dots) at (4.5, 1.25) {\(\vdots\)};
			\end{tikzpicture}
		\end{center}
		shows that \(B_{\ell}^{(d)}\) for \(\ell = k,k+1\) remains unchanged in \(\mathbf{n}^{(0)} > \dots > \mathbf{a}\). Hence
		\begin{equation}
			\rho_{k}^{(d)}(\mathbf{a}) = \rho_{k}^{(d)}(\mathbf{n}) = \rho.
		\end{equation}
		\item Let \(S = \{ j_{1} < j_{2} < \dots < j_{s} \}\) be the set of those \(j\) (\(\rho \leq j < r\)) for which \(m_{j} = n_{j} - 1\). Consider 
		\[
			\mathbf{a} = \mathbf{a}^{(0)} > \mathbf{a}^{(1)} > \dots > \mathbf{a}^{(s)} = \mathbf{m}
		\]
		with \(\mathbf{a}^{(t)} \defeq \mathbf{a}^{(t-1)} - \mathbf{e}_{j_{s+1-t}}\) (\(1 \leq t \leq s\)). Again, all the \(\mathbf{a}^{(t)}\) belong to \(\mathcal{W}(d,k)\), and by \eqref{Eq.Assumption-on-order-2}, \(s < (r-1)-(\rho-1)\), i.e., \(s \leq r-\rho-1\). Similar to 
		case (a), in each
		step \( \mathbf{a}^{(t-1)} > \mathbf{a}^{(t)}\), the quantity \(\rho_{k}^{(d)}\) may increase at most by 1, which gives \(s \geq \rho_{k}^{(d)}(\mathbf{m}) - \rho_{k}^{(d)}(\mathbf{a}) = r-\rho-1\). Summarizing,
		\begin{equation}
			s = r-\rho-1,
		\end{equation}
		which says that for all indices \(j\) with \(1 \leq j < r\) with one exception \(i\), \(m_{j} = n_{j}-1\) holds, and the exception \(i\) satisfies \(\rho \leq i < r\). That is, \(\mathbf{m} = \mathbf{n} - \mathbf{y} + \mathbf{e}_{i}\), which finishes
		the proof of Lemma \ref{Lemma.On-vertices-and-simplices}.
	\end{enumerate}
\end{proof}

\subsection{} The preceding lemmas suggest the following notation. For vertices \(\mathbf{m}\), \(\mathbf{n}\) of \(\mathcal{A}\), write 
\subsubsection{} \(\mathbf{m} \triangleleft \mathbf{n}\) if \(\mathbf{m} < \mathbf{n}\), but there exists no vertex \(\mathbf{a}\) of \(\mathcal{A}\) with \(\mathbf{m} < \mathbf{a} < \mathbf{n}\).\stepcounter{equation}%

This is equivalent with \(\mathbf{n} = \mathbf{m} + \mathbf{e}_{i}\) for some \(i\), \(1 \leq i < r\). Then the sequence of Lemma \ref{Lemma.On-ordered-vertices} may be written
\begin{equation}
	\mathbf{m} = \mathbf{m}^{(0)} \triangleleft \mathbf{m}^{(1)} \triangleleft \dots \triangleleft \mathbf{m}^{(s)} = \mathbf{n},
\end{equation}
and the occurring differences \(\mathbf{m}^{(t)} - \mathbf{m}^{(t-1)}\) are precisely the vectors \(\mathbf{e}_{j}\) with \(j \in S\), ordered increasingly.

The hardest part of the labor being carried out, we now easily harvest the fruit.

\begin{Theorem} \label{Theorem.Simplicial-complexes-strongly-equidimensional}
	The simplicial complexes \(\mathcal{W}(k)\), \(\mathcal{W}(d,k)\), \(\mathcal{A}(k)\), \(\mathcal{A}(d,k)\), \(\mathcal{BT}(k)\), \(\mathcal{BT}(d,k)\) are strongly equidimensional of dimension \(r-2\).
\end{Theorem}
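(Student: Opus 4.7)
The plan is to reduce, via $W$- and $\Gamma$-equivariance, to proving strong equidimensionality for $\mathcal{W}(d,k)$ (the $\mathcal{W}(k)$ case is handled in parallel with $v_k^{(d)}$ replaced by $v_k$). Given an arbitrary simplex $\sigma \subset \mathcal{W}(d,k)$, extend it greedily to a simplex $\sigma^{\max} \subset \mathcal{W}(d,k)$ admitting no further vertex adjunction, and set $\mathbf{m} = \min \sigma^{\max}$, $\mathbf{n} = \max \sigma^{\max}$. The goal is to show $\dim \sigma^{\max} = r-2$. By maximality, Lemma~\ref{Lemma.On-vertices-and-simplices}(a) applies, giving either (i) $\mathbf{n} = \mathbf{m} + \mathbf{y}$, or (ii) $\mathbf{n} = \mathbf{m} + \mathbf{y} - \mathbf{e}_i$ for some $i$ with $v_k^{(d)}$ constant on $\sigma^{\max}$. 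In case~(ii), the monotonicity of $v_k^{(d)}$ and $v_{k+1}^{(d)}$ on $\mathcal{W}(\mathbb{Z})$ (Lemma~\ref{Lemma.d-characteristic-sequences-on-WIZ}) combined with the constancy of $v_k^{(d)}$ on the endpoints forces both functions to be constant on all of $[\mathbf{m},\mathbf{n}] \cap \mathcal{W}(\mathbb{Z})$, so every such vertex lies in $\mathcal{W}(d,k)$; since $\mathbf{n}-\mathbf{m}$ has $r-2$ nonzero entries, any maximal chain in $[\mathbf{m},\mathbf{n}] \cap \mathcal{W}(\mathbb{Z})$ has dimension $r-2$, and by maximality $\sigma^{\max}$ coincides with such a chain.

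In case~(i), $\mathbf{n}-\mathbf{m} = \mathbf{y}$ has $r-1$ nonzero entries; if $v_k^{(d)}(\mathbf{m}) = v_k^{(d)}(\mathbf{n})$, the same constancy argument applied to the specific chain of Lemma~\ref{Lemma.On-ordered-vertices} (adding $\mathbf{e}_1, \mathbf{e}_2, \ldots, \mathbf{e}_{r-1}$ in increasing order, which stays within $\mathcal{W}(\mathbb{Z})$) would produce an $(r-1)$-simplex of $\mathcal{W}(d,k)$, contradicting condition~(a) proved in Proposition~\ref{Proposition.Characterization-of-Wdk-as-set-of-real-points}. Hence $v_k^{(d)}(\mathbf{n}) = v_k^{(d)}(\mathbf{m}) + 1$. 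Writing $\sigma^{\max} = \{\mathbf{a}^{(0)} < \cdots < \mathbf{a}^{(t)}\}$ with $\mathbf{a}^{(0)} = \mathbf{m}$, $\mathbf{a}^{(t)} = \mathbf{n}$, and noting that consecutive pairs are neighboring vertices (\ref{Subsubsection.Vertices-are-neighbors}), apply Lemma~\ref{Lemma.On-ordered-vertices} to each pair $(\mathbf{a}^{(s-1)}, \mathbf{a}^{(s)})$. The differences $\mathbf{a}^{(s)}-\mathbf{a}^{(s-1)}$ partition the increment $\mathbf{y}$, so the concatenation of sub-chains runs through exactly $r$ vertices; each sub-chain contains at most one vertex outside $\mathcal{W}(d,k)$, and since $v_k^{(d)}$ grows by at most $1$ on each sub-chain while the total increment equals $1$, exactly one sub-chain carries the exception. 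The resulting chain is a simplex of $\mathcal{W}$ (the neighbor condition holds automatically in the cube $[\mathbf{m}, \mathbf{m}+\mathbf{y}]$) containing $\sigma^{\max}$ with a single vertex outside $\mathcal{W}(d,k)$; deleting this vertex gives an $(r-2)$-simplex of $\mathcal{W}(d,k)$ containing $\sigma^{\max}$, which by maximality equals $\sigma^{\max}$.

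The main obstacle is the concatenation step in case~(i), namely verifying that the piecewise-constructed sub-chains fit together both as a single simplex of $\mathcal{W}$ (a set of mutually neighboring vertices, not just a totally ordered set) and with the single-exception property preserved. Both are ensured by the combinatorial constraints on the cube $[\mathbf{m}, \mathbf{m}+\mathbf{y}]$ (yielding the neighbor condition) together with the step-$\leq 1$ behavior of $v_k^{(d)}$ along chains (Lemma~\ref{Lemma.d-characteristic-sequences-on-WIZ}), which concentrates the global unit jump into a single sub-chain. Once strong equidimensionality of $\mathcal{W}(d,k)$ is established, that of $\mathcal{A}(d,k) = W \cdot \mathcal{W}(d,k)$ and $\mathcal{BT}(d,k) = \Gamma \cdot \mathcal{W}(d,k)$ follows from the simpliciality of the respective group actions, and the corresponding statements for the $\alpha_k$-complexes are obtained by the parallel argument using $v_k$ in place of $v_k^{(d)}$.
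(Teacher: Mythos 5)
Your proposal is correct and takes essentially the same route as the paper: reduce via the $W$- and $\Gamma$-actions to a maximal simplex of $\mathcal{W}(d,k)$, apply Lemma \ref{Lemma.On-vertices-and-simplices} (Case (a)) to identify $\max\sigma-\min\sigma$ as $\mathbf{y}$ or $\mathbf{y}-\mathbf{e}_i$, and then use Lemma \ref{Lemma.On-ordered-vertices} together with the step-$\leq 1$ monotonicity of $v_k^{(d)}$ to show that the interpolating chain has exactly one exceptional vertex in the first case and none in the second, forcing dimension $r-2$. You in fact spell out (via the contradiction with condition (a) of \ref{Subsection.Simpliciality-of-modular-form}) why the exception occurs at least once when $\mathbf{n}=\mathbf{m}+\mathbf{y}$, a point the paper's write-up leaves implicit.
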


\begin{proof}
	\begin{enumerate}[wide, label=(\roman*)]
		\item As written in the header of this section, the result for \(\mathcal{W}(d,k)\) implies the result for all the other complexes, as \(\mathcal{W}(d,k) = \mathcal{W}(k)\) for \(d \geq k\), \(\mathcal{A}(d,k) = W \mathcal{W}(d,k)\), 
		\(\mathcal{BT}(d,k) = \Gamma \mathcal{W}(d,k)\), etc.
		We are thus reduced to show that each simplex of \(\mathcal{W}(d,k)\) is contained in a simplex of dimension \(r-2\) of \(\mathcal{W}(d,k)\).
		\item Let \(\sigma\) be a maximal simplex in \(\mathcal{W}(d,k)\). We must show that it has dimension \(r-2\). Put \(\mathbf{m} \defeq \min \sigma\), \(\mathbf{n} \defeq \max \sigma\). By Lemma \ref{Lemma.On-vertices-and-simplices}, we have either
		(a) \(\mathbf{n} = \mathbf{m} + \mathbf{y}\) or (b) \(\mathbf{n} = \mathbf{m} + \mathbf{y} - \mathbf{e}_{i}\) for some \(i\).
		\item Write \(\sigma = \{ \mathbf{m}^{(0)}, \dots, \mathbf{m}^{(c)} \}\) with \(\mathbf{m} = \mathbf{m}^{(0)} < \mathbf{m}^{(1)} < \dots < \mathbf{m}^{(c)} = \mathbf{n}\), where \(c = \dim \sigma\). By Lemma \ref{Lemma.On-ordered-vertices}, and since
		\(\sigma\) is maximal, each of the steps \(\mathbf{m}^{(j-1)} < \mathbf{m}^{(j)}\) is either of shape (1) \(\mathbf{m}^{(j-1)} \triangleleft \mathbf{m}^{(j)}\) or (2) \(\mathbf{m}^{(j-1)} \triangleleft \mathbf{a} \triangleleft \mathbf{m}^{(j)}\)
		with some vertex \(\mathbf{a}\) of \(\mathcal{W}\) not in \(\mathcal{W}(d,k)\). Furthermore, shape (2) occurs precisely once in case (a), and never in case (b). Therefore, \(c = \dim \sigma = r-2\) in both cases. 
	\end{enumerate}
\end{proof}

\section{Boundarylessness} \label{Section.Boundarylessness}

\subsection{} In this section, we show that \(\mathcal{A}(d,k)\) is boundaryless in the sense of \ref{Subsection.Convention-for-simplicial-complexes}. This immediately gives the same property for the complexes \(\mathcal{A}(k)\), \(\mathcal{BT}(k)\) and \(\mathcal{BT}(d,k)\). Note 
that we don't prove the assertion for \(\mathcal{W}(k)\) and \(\mathcal{W}(d,k)\), as it fails at the boundary \(\bigcup_{1 \leq j < r} \mathcal{W}_{j}\) of \(\mathcal{W}\). Now the result we are heading to is easily shown by ad hoc means if
\(d=1\) or \(k=1\). Namely, if \(d=1\) then \(\mathcal{W}(d,k) = \mathcal{W}(1,k) = \mathcal{W}_{r-k}\) is one of the walls of \(\mathcal{W}\). Similarly, if \(k=1\) then \(\mathcal{W}(d,1) = \mathcal{W}(1) = \mathcal{W}_{r-1}\). The crucial 
point in the proof given is to show that certain vertices \(\mathbf{n}'\) actually belong to \(\mathcal{A}(d,k) = W\mathcal{W}(d,k)\). This is easy in the described cases, since 
\(\mathcal{W}_{i} = \{ \mathbf{x} \in \mathcal{W} \mid x_{i} = x_{i+1}\}\). 

On the other hand, we make heavy use of preceding results that in general require \(d\) and \(k\) to be at least 2. Therefore we keep our assumptions accumulated so far:
\subsubsection{} The numbers \(r\), \(d\) and \(k\) satisfy \(r \geq 3\), \(d\geq 2\), \(2 \leq k < rd\). \stepcounter{equation}%

The proof scheme we give in this framework also applies to \(d\) or \(k = 1\), but some of the references to earlier results would have to be replaced by easy ad hoc arguments. The next lemma motivates the proof of Theorem \ref{Theorem.Simplicial-complexes-boundaryless}.

\begin{Lemma} \label{Lemma.Maximal-simplex-of-A}
	Let \(\tau\) be a maximal simplex of \(\mathcal{A}\). With \(\mathbf{m} \defeq \min \tau\), write \(\tau = \{ \mathbf{m}^{(j)} \mid 0 \leq j < r \}\), where
	\(\mathbf{m} = \mathbf{m}^{(0)} \triangleleft \mathbf{m}^{(1)} \triangleleft \dots \triangleleft \mathbf{m}^{(r-1)} = \mathbf{m} + \mathbf{y}\).
	For a given \(\mathbf{n} \in \tau\), let \(\sigma\) be the simplex \(\tau \smallsetminus \{ \mathbf{n}\}\) and \(\mathbf{n}' \defeq s_{\sigma}(\mathbf{n})\) (see \eqref{Eq.Reflection-at-<-sigma->}). Then
	\[
		\mathbf{n}' = \begin{cases} \mathbf{m}^{(r)} \defeq \mathbf{m}^{(1)} + \mathbf{y},	&\text{if } \mathbf{n} = \mathbf{m}^{(0)} = \mathbf{m}; \\ \mathbf{m}^{(-1)} \defeq \mathbf{m}^{(r-2)} - \mathbf{y},	&\text{if } \mathbf{n} = \mathbf{m}^{(r-1)}, \\ \mathbf{m}^{(j-1)} + \mathbf{m}^{(j+1)} - \mathbf{m}^{(j)},	&\text{if } \mathbf{n} = \mathbf{m}^{(j)} \text{ with } 1 \leq j \leq r-2. \end{cases}
	\]
\end{Lemma}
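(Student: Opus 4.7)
The plan is to parametrize $\tau$ by its edge-differences: write $\mathbf{m}^{(\ell)} - \mathbf{m}^{(\ell-1)} = \mathbf{e}_{i_{\ell-1}}$ for $1 \leq \ell \leq r-1$. Telescoping gives $\mathbf{y} = \mathbf{m}^{(r-1)} - \mathbf{m}^{(0)} = \sum \mathbf{e}_{i_{\ell-1}}$, which forces $(i_0, \ldots, i_{r-2})$ to be a permutation of $(1, \ldots, r-1)$. The key observation is then that $\mathcal{A}$ is an affine Coxeter complex (of type $\tilde{A}_{r-1}$), so each codimension-1 simplex $\sigma$ is contained in exactly two maximal simplices of $\mathcal{A}$, and the simplicial reflection $s_\sigma$, being the unique non-trivial simplicial isometry fixing $\langle\sigma\rangle$ pointwise, interchanges them. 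Consequently $\mathbf{n}' = s_\sigma(\mathbf{n})$ may be characterized combinatorially as the unique vertex for which $\{\mathbf{n}'\}\cup\sigma$ is a maximal simplex of $\mathcal{A}$ distinct from $\tau$; in each case it then suffices to display this alternative alcove.

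For the middle case $\mathbf{n} = \mathbf{m}^{(j)}$ with $1 \leq j \leq r-2$, one has $\mathbf{m}^{(j+1)} - \mathbf{m}^{(j-1)} = \mathbf{e}_{i_{j-1}} + \mathbf{e}_{i_j}$ with $i_{j-1} \neq i_j$, so the only two vertices $\mathbf{a}$ satisfying $\mathbf{m}^{(j-1)} \triangleleft \mathbf{a} \triangleleft \mathbf{m}^{(j+1)}$ are $\mathbf{m}^{(j-1)} + \mathbf{e}_{i_{j-1}} = \mathbf{m}^{(j)}$ and $\mathbf{m}^{(j-1)} + \mathbf{e}_{i_j}$; excluding $\mathbf{m}^{(j)}$ leaves $\mathbf{n}' = \mathbf{m}^{(j-1)} + \mathbf{e}_{i_j} = \mathbf{m}^{(j-1)} + \mathbf{m}^{(j+1)} - \mathbf{m}^{(j)}$, as required. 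For the boundary case $\mathbf{n} = \mathbf{m}^{(0)}$, consecutive vertices of $\sigma$ differ by a single $\mathbf{e}_i$, so no interior insertion into $\sigma$ is possible and $\mathbf{n}'$ must be adjoined at one end. Prepending $\mathbf{n}' \triangleleft \mathbf{m}^{(1)}$ with $\mathbf{n}' = \mathbf{m}^{(1)} - \mathbf{e}_k$ forces, via the condition that the edge-differences of the enlarged chain permute $\mathbf{e}_1, \ldots, \mathbf{e}_{r-1}$, the choice $k = i_0$, i.e.\ $\mathbf{n}' = \mathbf{m}^{(0)}$, which is excluded. Hence $\mathbf{n}'$ is appended at the top: $\mathbf{n}' = \mathbf{m}^{(r-1)} + \mathbf{e}_{i_0} = \mathbf{m}^{(0)} + \mathbf{y} + \mathbf{e}_{i_0} = \mathbf{m}^{(1)} + \mathbf{y}$. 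The case $\mathbf{n} = \mathbf{m}^{(r-1)}$ is entirely dual, yielding $\mathbf{n}' = \mathbf{m}^{(0)} - \mathbf{e}_{i_{r-2)}} = \mathbf{m}^{(r-2)} - \mathbf{y}$.

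The main subtlety is the opening appeal to affine-Coxeter-complex theory, i.e.\ the claim that the combinatorial ``swap the other alcove'' rule really computes $s_\sigma$. To avoid this, one can verify directly in each case that the midpoint $(\mathbf{n}+\mathbf{n}')/2$ lies in $\langle\sigma\rangle$ and that $\mathbf{n}'-\mathbf{n}$ lies in the normal direction of $\langle\sigma\rangle$ with respect to the Killing-form inner product. The latter requires a small computation to account for the $x_r=0$ normalization: one checks that $\mathbf{n}'-\mathbf{n}$ represents a root modulo $\mathbb{R}(1,\ldots,1)$, namely $\mathbf{e}_{i_j}-\mathbf{e}_{i_{j-1}} = \boldsymbol{\alpha}_{i_j,i_{j-1}}$ in the middle case and $\mathbf{e}_{i_0} - \mathbf{e}_r = \boldsymbol{\alpha}_{i_0,r}$ (modulo $\mathbb{R}(1,\ldots,1)$) in the boundary case $\mathbf{n} = \mathbf{m}^{(0)}$, with the dual computation for $\mathbf{n} = \mathbf{m}^{(r-1)}$.
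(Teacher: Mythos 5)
Your proof is correct and follows essentially the same route as the paper: both arguments reduce the lemma to showing that $\sigma\cup\{\mathbf{n}'\}$ is the (unique) other maximal simplex of $\mathcal{A}$ containing $\sigma$, and your middle-case computation via the permutation of edge-differences is exactly the paper's argument with $w(j)=i_{j-1}$. The only difference is one of explicitness — you spell out the two-chamber/reflection characterization (and offer a direct metric check) where the paper simply invokes \ref{Subsubsection.Vertices-are-neighbors} and calls the boundary cases obvious.
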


\begin{proof}
	It suffices to see that in each case \(\tau' \defeq \sigma \cup \{ \mathbf{n}' \}\) with the stated \(\mathbf{n}'\) is a simplex. This is obvious in the first two cases, see \ref{Subsubsection.Vertices-are-neighbors}. For the last one, let \(w\)
	be the permutation of \( \{1,2,\dots, r-1\}\) such that for each \(j \geq 1\), \(\mathbf{m}^{(j-1)} + \mathbf{e}_{w(j)} = \mathbf{m}^{(j)}\). Then the stated \(\mathbf{n}' = \mathbf{m}^{(j-1)} + \mathbf{m}^{(j+1)} - \mathbf{m}^{(j)}\) equals
	\(\mathbf{m}^{(j-1)} + \mathbf{e}_{w(j+1)}\) and \(\mathbf{n}' + \mathbf{e}_{w(j)} = \mathbf{m}^{(j+1)}\). That is, \(\mathbf{m}^{(j-1)} \triangleleft \mathbf{n}' \triangleleft \mathbf{m}^{(j+1)}\) with the order reversed in which 
	\(\mathbf{e}_{w(j)}\) and \(\mathbf{e}_{w(j+1)}\) are added. This shows that \(\tau' = \tau \smallsetminus \{\mathbf{n}\} \cup \{ \mathbf{n}'\}\) is a simplex.
\end{proof}

\begin{Lemma} \label{Lemma.Vertex-sandwich}
	Let \(\mathbf{a}\), \(\mathbf{b}\), \(\mathbf{n}\), \(\mathbf{h}\) be vertices of \(\mathcal{W}\), where \(\mathbf{a}, \mathbf{b}, \mathbf{n} \in \mathcal{W}(d,k)\) and \(\mathbf{h} \notin \mathcal{W}(d,k)\). Assume that
	\begin{enumerate}[label=\(\mathrm{(\roman*)}\)]
		\item \( \mathbf{a} \triangleleft \mathbf{n} \triangleleft \mathbf{h} \triangleleft \mathbf{b}\) or
		\item \( \mathbf{a} \triangleleft \mathbf{h} \triangleleft \mathbf{n} \triangleleft \mathbf{b}\)
	\end{enumerate}
	holds. Then there exists \(\mathbf{n}' \in \mathcal{A}(d,k)\), \(\mathbf{n}' \neq \mathbf{n}\), such that \(\mathbf{a} < \mathbf{n}' < \mathbf{b}\) is satisfied.
\end{Lemma}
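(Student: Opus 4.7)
The plan is to locate $\mathbf{n}'$ among the other vertices of the order interval $[\mathbf{a}, \mathbf{b}]$. Since $\mathbf{a} \triangleleft \mathbf{n} \triangleleft \mathbf{h} \triangleleft \mathbf{b}$ (or the analogue in case~(ii)) involves three unit steps $\mathbf{e}_{i_1}, \mathbf{e}_{i_2}, \mathbf{e}_{i_3}$ with necessarily distinct indices (otherwise $\mathbf{a}$ and $\mathbf{b}$ would fail the neighbor condition of~\ref{Subsubsection.Vertices-are-neighbors}), this interval contains $2^3 = 8$ vertices of $\mathcal{A}(\mathds{Z})$. Besides $\mathbf{a}, \mathbf{n}, \mathbf{h}, \mathbf{b}$, four candidate vertices remain, and it suffices to exhibit one lying in $\mathcal{A}(d,k)$: such a vertex is automatically distinct from $\mathbf{n}$ and lies strictly between $\mathbf{a}$ and $\mathbf{b}$.

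Using Remark~\ref{Remarks.d-characteristic-sequences-of-sums}, I first read off the pair $(v_k^{(d)}, v_{k+1}^{(d)})$ along the given chain: setting $v \defeq v_k^{(d)}(\mathbf{a})$, case~(i) yields the successive values $(v,v), (v,v), (v,v+1), (v+1,v+1)$ at $\mathbf{a}, \mathbf{n}, \mathbf{h}, \mathbf{b}$, while case~(ii) yields $(v,v), (v,v+1), (v+1,v+1), (v+1,v+1)$. In both cases $v_k^{(d)}(\mathbf{a}) < v_k^{(d)}(\mathbf{b})$, so Lemma~\ref{Lemma.On-ordered-vertices} applied to $(\mathbf{a}, \mathbf{b})$ with $S = \{i_1, i_2, i_3\}$ produces an increasing-index path $\mathbf{a} \triangleleft \mathbf{a}^{(1)} \triangleleft \mathbf{a}^{(2)} \triangleleft \mathbf{b}$ in which exactly one of $\mathbf{a}^{(1)}, \mathbf{a}^{(2)}$ fails to lie in $\mathcal{W}(d,k)$, the other belonging to $\mathcal{W}(d,k) \subseteq \mathcal{A}(d,k)$. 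If this good intermediate differs from $\mathbf{n}$, take it as $\mathbf{n}'$ and we are done.

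The residual situation is when the good intermediate coincides with $\mathbf{n}$: in case~(i) this forces $i_1 = \min\{i_1, i_2, i_3\}$, and in case~(ii) it forces $i_3 = \max\{i_1, i_2, i_3\}$ (so $\mathbf{n} = \mathbf{a}^{(2)}$). Here I would combine Corollary~\ref{Corollary.Condition-for-k-capped} at the good-to-bad transition (forcing $\mathbf{n}$ in case~(i), respectively $\mathbf{a}$ in case~(ii), to be $k$-capped with the adding index $\geq \rho_k^{(d)}$) with Lemma~\ref{Lemma.Membership-of-sums-of-vertices} at the bad-to-good transition, to pin down the shape of $\diag^{(d)}(\mathbf{a})$ near its critical column at value~$v$. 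A direct inspection then produces, among the four remaining candidates in $[\mathbf{a}, \mathbf{b}]$, one whose $(v+1)$-column carries both $B_k^{(d)}$ and $B_{k+1}^{(d)}$; it therefore satisfies $v_k^{(d)} = v_{k+1}^{(d)}$ and lies in $\mathcal{A}(d,k)$. Since this candidate may fall outside $\mathcal{W}$ (as small $r = 4$ examples already show, e.g.\ $\mathbf{a} + \mathbf{e}_{i_2}$ when $a_{i_2 - 1} = a_{i_2}$), $v_k^{(d)}$ must be computed through the $W$-invariant extension to $\mathcal{A}(\mathds{Z})$ from~\ref{Subsubsection.Validity-of-lemma-for-AIZ} rather than by direct appeal to Lemmas~\ref{Lemma.Boxes-of-sums-of-certain-vertices}--\ref{Lemma.Membership-of-sums-of-vertices}. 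The main obstacle is this final subcase analysis: the correct candidate depends on the position of $\rho_k^{(d)}(\mathbf{a})$ relative to the three indices and on whether $\mathbf{a}$ is itself $k$-capped, but the handful of subcases that arise are each resolved by an elementary $\diag^{(d)}$ check in the style of the proofs of Lemmas~\ref{Lemma.Boxes-of-sums-of-certain-vertices} and~\ref{Lemma.Membership-of-sums-of-vertices}.
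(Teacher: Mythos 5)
Your opening reduction is sound and is genuinely different from the paper's: the paper never invokes Lemma~\ref{Lemma.On-ordered-vertices} here, whereas applying it to the pair $(\mathbf{a},\mathbf{b})$ (legitimate, since the three step indices are distinct in the situation where the lemma is used, so $d(\mathbf{a},\mathbf{b})=1$) correctly dispatches every configuration in which the unique good intermediate of the increasing-index path differs from $\mathbf{n}$; your table of the pairs $(v_k^{(d)},v_{k+1}^{(d)})$ along the chain, read off from Remark~\ref{Remarks.d-characteristic-sequences-of-sums}, is also right. The problem is that the residual case you isolate ($i_1=\min$ in (i), $i_3=\max$ in (ii)) is not a degenerate leftover but essentially the generic situation the lemma exists for, and there your argument stops being a proof. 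You neither identify which of the four remaining vertices of $[\mathbf{a},\mathbf{b}]$ lies in $\mathcal{A}(d,k)$ nor verify it; and the one concrete assertion you do make --- that the winning candidate has both $B_k^{(d)}$ and $B_{k+1}^{(d)}$ in its $(v+1)$-column --- is unsubstantiated and not uniformly true. In case (i) the vertex that works is $\mathbf{n}'=\mathbf{a}+\mathbf{e}_{i_3}$, and every good subcase of Lemma~\ref{Lemma.Boxes-of-sums-of-certain-vertices} that certifies it yields $v_k^{(d)}(\mathbf{n}')=v_{k+1}^{(d)}(\mathbf{n}')=v$, i.e.\ both critical boxes stay in the $v$-column.

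For comparison, the paper goes straight at the candidate: in case (i) it sets $\mathbf{n}'\defeq\mathbf{a}+\mathbf{e}_{j''}$ (the index of the \emph{last} step), having first extracted from Corollary~\ref{Corollary.Condition-for-k-capped} and Lemma~\ref{Lemma.Membership-of-sums-of-vertices} that $\mathbf{n}$ is $k$-capped, $j'\geq\rho$ and $v-d<n_{j''}$ (so $(j'',v)$ is a box of $\diag^{(d)}(\mathbf{a})$), and from Lemma~\ref{Lemma.Boxes-of-sums-of-certain-vertices} that $v_k^{(d)}(\mathbf{a})=v$ and $\rho_k^{(d)}(\mathbf{a})\in\{\rho,\rho+1\}$; in case (ii) it takes $\mathbf{n}'=\mathbf{a}+\mathbf{e}_{j''}$ or $\mathbf{a}+\mathbf{e}_{j''}+\mathbf{e}_{j'}$. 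The one delicate point --- that $\mathbf{n}'$ may leave $\mathcal{W}$, so one must rerun the \emph{proofs} of Lemmas~\ref{Lemma.Boxes-of-sums-of-certain-vertices} and~\ref{Lemma.Membership-of-sums-of-vertices} for the diagrams extended to $\mathcal{A}(\mathds{Z})$ rather than cite their statements --- you do spot. But spotting it is not doing it: to close your argument you would still have to name the candidate in each residual subcase and carry out exactly the $\diag^{(d)}$ verification the paper performs, so as it stands the proposal has a genuine gap at the heart of the lemma.
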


\begin{proof}
	\begin{enumerate}[wide, label=(\roman*)]
		\item Let \(j,j',j''\) be the indices such that \(\mathbf{n} = \mathbf{a} + \mathbf{e}_{j}\), \(\mathbf{h} = \mathbf{n} + \mathbf{e}_{j'}\), \(\mathbf{b} = \mathbf{h} + e_{j''}\). Put \(v \defeq v_{k}^{(d)}(\mathbf{n})\),
		\(\rho \defeq \rho_{k}^{(d)}(\mathbf{n})\). By Lemma \ref{Lemma.Boxes-of-sums-of-certain-vertices} and Lemma \ref{Lemma.Membership-of-sums-of-vertices}, \(\mathbf{n}\) is \(k\)-capped, \(j' \geq \rho\), and \(v-d < n_{j''}\). Again from Lemma \ref{Lemma.Boxes-of-sums-of-certain-vertices}, \(v_{k}^{(d)}(\mathbf{a}) = v\) and \(\rho_{k}^{(d)}(\mathbf{a}) = \rho\)
		or \(\rho + 1\). Now \(j''\) might fail to be admissible for \(\mathbf{a}\), that is, \(\mathbf{n}' \defeq \mathbf{a} + \mathbf{e}_{j''}\) possibly doesn't belong to \(\mathcal{W}\). Therefore we cannot directly apply Lemma \ref{Lemma.Boxes-of-sums-of-certain-vertices}; nevertheless
		its proof also works to show that \(\mathbf{n}' \in \mathcal{A}(d,k)\) under our conditions.
		\item Let \(j,j',j''\) be such that \(\mathbf{h} = \mathbf{a} + \mathbf{e}_{j}\), \(\mathbf{n} = \mathbf{h} + \mathbf{e}_{j'}\), \(\mathbf{b} = \mathbf{n} + \mathbf{e}_{j''}\). Put \(v \defeq v_{k}^{(d)}(\mathbf{a})\),
		\(\rho \defeq \rho_{k}^{(d)}(\mathbf{a})\). Again, \(\mathbf{a}\) is \(k\)-capped and \(v-d < a_{j'}\) by Lemma \ref{Lemma.Membership-of-sums-of-vertices}. If \(\mathbf{a} + \mathbf{e}_{j''} \in \mathcal{A}(d,k)\) then \(\mathbf{n}' \defeq \mathbf{a} + \mathbf{e}_{j''}\) is
		as wanted. Otherwise, the conditions on \(j'\) are such that the proof of Lemma \ref{Lemma.Membership-of-sums-of-vertices} applies to show that \(\mathbf{n}' \defeq \mathbf{a} + \mathbf{e}_{j''} + \mathbf{e}_{j'} \in \mathcal{A}(d,k)\).
	\end{enumerate}
\end{proof}

We are now prepared to show the result of this section.

\begin{Theorem} \label{Theorem.Simplicial-complexes-boundaryless}
	The simplicial complexes \(\mathcal{A}(k)\), \(\mathcal{A}(d,k)\), \(\mathcal{BT}(k)\), \(\mathcal{BT}(d,k)\) are boundaryless.
\end{Theorem}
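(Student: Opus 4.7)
Let $\sigma$ be an $(r-3)$-simplex of $\mathcal{A}(d,k)$; by strong equidimensionality (Theorem \ref{Theorem.Simplicial-complexes-strongly-equidimensional}), $\sigma$ is contained in an $(r-2)$-simplex $\tau_{1} = \sigma \cup \{\mathbf{n}\}$ of $\mathcal{A}(d,k)$. The plan is to produce a second $(r-2)$-simplex $\tau_{2}$ of $\mathcal{A}(d,k)$ containing $\sigma$ with $\tau_{2} \neq \tau_{1}$.

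Extend $\tau_{1}$ to a maximal simplex $\tau = \tau_{1} \cup \{\mathbf{h}\}$ of $\mathcal{A}$. In the thin apartment $\mathcal{A}$ the panel $\tau_{1}$ is shared by exactly one further chamber $\tau^{*} = \tau_{1} \cup \{\mathbf{h}^{*}\}$, with $\mathbf{h}^{*} = s_{\tau_{1}}(\mathbf{h})$ described explicitly by Lemma \ref{Lemma.Maximal-simplex-of-A}. If either $\mathbf{h}$ or $\mathbf{h}^{*}$ lies in $\mathcal{A}(d,k)$, one takes $\tau_{2} \defeq \sigma \cup \{\mathbf{h}\}$ or $\tau_{2} \defeq \sigma \cup \{\mathbf{h}^{*}\}$, respectively.

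Otherwise both $\mathbf{h}, \mathbf{h}^{*} \notin \mathcal{A}(d,k)$, and the Vertex-Sandwich Lemma \ref{Lemma.Vertex-sandwich} enters. By the $W$-equivariance of $\mathcal{A}(d,k)$ and the fundamental-domain property of $\mathcal{W}$, one may $W$-conjugate so that $\tau \subset \mathcal{W}$; then $\tau_{1} \subset \mathcal{W}(d,k)$ and $\mathbf{h} \in \mathcal{W} \setminus \mathcal{W}(d,k)$. Writing the chain of $\tau$ as $\mathbf{m}^{(0)} \triangleleft \cdots \triangleleft \mathbf{m}^{(r-1)}$ with $\mathbf{n} = \mathbf{m}^{(s)}$, $\mathbf{h} = \mathbf{m}^{(t)}$, the central case is $|s - t| = 1$ with both positions interior: setting $\mathbf{a} = \mathbf{m}^{(\min(s,t)-1)}$ and $\mathbf{b} = \mathbf{m}^{(\max(s,t)+1)}$, Lemma \ref{Lemma.Vertex-sandwich} applied to the 4-chain $\mathbf{a} \triangleleft x_{1} \triangleleft x_{2} \triangleleft \mathbf{b}$ with $\{x_{1}, x_{2}\} = \{\mathbf{n}, \mathbf{h}\}$ supplies $\mathbf{n}' \in \mathcal{A}(d,k)$, $\mathbf{n}' \neq \mathbf{n}$, with $\mathbf{a} < \mathbf{n}' < \mathbf{b}$. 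Because the remaining vertices of $\sigma$ lie outside the open interval $(\mathbf{a}, \mathbf{b})$ and $\mathbf{n}'$ sits inside the cube $[\mathbf{m}^{(0)}, \mathbf{m}^{(0)}+\mathbf{y}]$, the set $\tau_{2} \defeq \sigma \cup \{\mathbf{n}'\}$ is a chain, hence the sought $(r-2)$-simplex of $\mathcal{A}(d,k)$.

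The main obstacle is the residual configurations $|s - t| \geq 2$ and boundary positions with $\{s, t\} \cap \{0, r-1\} \neq \emptyset$. In these cases no 4-vertex sub-chain of $\tau$ has both $\mathbf{n}$ and $\mathbf{h}$ as its two middle vertices, and Lemma \ref{Lemma.Vertex-sandwich} applied around $\mathbf{h}$ alone produces an $\mathbf{n}'$ that is generally not comparable with those vertices of $\sigma$ sitting between $\mathbf{n}$ and $\mathbf{h}$ in the chain of $\tau$. One therefore enumerates directly the (finite) collection of $(r-2)$-simplices of $\mathcal{A}$ containing $\sigma$—in the non-adjacent case these are $\tau_{1}$ together with $\sigma \cup \{\mathbf{n}^{\dagger}\}$, $\sigma \cup \{\mathbf{h}\}$, $\sigma \cup \{\mathbf{h}^{*}\}$, where $\mathbf{n}^{\dagger}$ is the ``sibling'' of $\mathbf{n}$ in the 2-element gap of $\sigma$'s chain at position $s$—and verifies that at least one of $\mathbf{n}^{\dagger}, \mathbf{h}, \mathbf{h}^{*}$ actually belongs to $\mathcal{A}(d,k)$. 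The verification is a $d$-diagram analysis in the spirit of the proof of Lemma \ref{Lemma.Vertex-sandwich}, resting on Theorem \ref{Theorem.Membership-via-d-characteristic-sequences} together with Propositions \ref{Proposition.Sums-of-vertices} and \ref{Proposition.d-characteristic-sequences-of-certain-vertices}. Once the $\mathcal{A}(d,k)$-membership of the required alternative vertex is established, the corresponding simplex $\tau_{2}$ containing $\sigma$ is in $\mathcal{A}(d,k)$, and boundarylessness follows. Finally, the extension from $\mathcal{A}(d,k)$ to $\mathcal{BT}(d,k) = \Gamma\mathcal{A}(d,k)$ (and likewise for $\mathcal{A}(k)$, $\mathcal{BT}(k)$) is immediate by $\Gamma$-equivariance.
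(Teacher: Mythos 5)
Your overall strategy is correct, and two of your case families are sound: the ``central'' case is exactly where Lemma \ref{Lemma.Vertex-sandwich} applies, and in the non-adjacent interior case the reflected sibling $\mathbf{n}^{\dagger} = \mathbf{m}^{(s-1)} + \mathbf{m}^{(s+1)} - \mathbf{m}^{(s)}$ does lie in $\mathcal{A}(d,k)$ (via the generalization of Lemma \ref{Lemma.Boxes-of-sums-of-certain-vertices}, since $\mathbf{n}^{\dagger}$ is obtained from the $\mathcal{A}(d,k)$-vertex $\mathbf{m}^{(s-1)}$ by adding a single $\mathbf{e}_{j}$ while staying below the $\mathcal{A}(d,k)$-vertex $\mathbf{m}^{(s+1)}$). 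Two remarks on the framing, though. Your opening dichotomy is vacuous: $\mathbf{h}$ and $\mathbf{h}^{*}$ can \emph{never} lie in $\mathcal{A}(d,k)$, since either would make $\tau_{1}\cup\{\mathbf{h}\}$ (resp.\ $\tau_{1}\cup\{\mathbf{h}^{*}\}$) a full chamber of $\mathcal{A}$ all of whose vertices lie in the full subcomplex $\mathcal{A}(d,k)$, contradicting $\dim\mathcal{A}(d,k)=r-2$. For the same reason they are dead candidates in your ``residual'' enumeration, so in every residual configuration the burden falls entirely on the one remaining candidate.

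The genuine gap is precisely the boundary configurations $s\in\{0,r-1\}$ (including the case where $\tau_{1}$ is a consecutive chain, so that $\mathbf{h}$ sits at an end and there is no interior gap at all). There is then no ``sibling in a 2-element gap'': the surviving candidates are vertices of the form $\mathbf{m}^{(1)}+\mathbf{y}$, or $\mathbf{m}^{(r-2)}-\mathbf{y}$, or vertices strictly between these and the extremes of $\sigma$, and proving that one of them belongs to $\mathcal{A}(d,k)$ is the hard core of the theorem --- not a routine variant of the diagram analysis in Lemma \ref{Lemma.Vertex-sandwich}. What is actually needed is the full strength of Lemma \ref{Lemma.On-vertices-and-simplices}, applied to a maximal extension of $\sigma$ inside $[\mathbf{m}^{(1)},\mathbf{m}^{(1)}+\mathbf{y}]$ resp.\ $[\mathbf{m}^{(r-3)}-\mathbf{y},\mathbf{m}^{(r-3)}]$ (whose two cases (a) and (b) are deliberately not dual and require separate arguments), together with Corollary \ref{Corollary.Membership-of-AIZ}(ii) to dispose of the subcase $v_{k}^{(d)}\geq d$, where the downward candidate $\mathbf{m}^{(r-2)}-\mathbf{y}$ lies in $\mathcal{A}(d,k)$ automatically. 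Note also that these candidates typically leave $\mathcal{W}$ --- which is exactly why boundarylessness holds for $\mathcal{A}(d,k)$ but fails for $\mathcal{W}(d,k)$ --- so the $W$-conjugation into $\mathcal{W}$ does not spare you from working in $\mathcal{A}$ at this point. As written, your proposal defers all of this to an unspecified verification ``in the spirit of'' an earlier lemma, and therefore does not prove the statement.
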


\begin{proof}
	\begin{enumerate}[wide, label=(\roman*)]
		\item As already mentioned, it suffices to show the assertion for \(\mathcal{A}(d,k)\). Thus let the maximal simplex \(\tau\) of \(\mathcal{A}(d,k)\) be given, \(\mathbf{n} \in \tau\), and 
		\(\sigma \defeq \tau \smallsetminus \{ \mathbf{n}\}\). We must show that there exists a vertex \(\mathbf{n}' \in \mathcal{A}(d,k)(\mathds{Z}) \smallsetminus \tau\) such that \(\tau' \defeq \sigma \cupdot \{ \mathbf{n}' \}\)
		is a (necessarily maximal) simplex of \(\mathcal{A}(d,k)\). Without restriction, \(\tau \subset \mathcal{W}(d,k)\).
		\item As usual, we must consider several cases (and subcases), for each of which we construct \(\mathbf{n}'\), using Lemma \ref{Lemma.Maximal-simplex-of-A} as a guideline. First consider the case 
		\begin{enumerate}[label=(\arabic*)]
			\item There exist \(\mathbf{a}\) and \(\mathbf{b}\) in \(\sigma\) such that \(\mathbf{a} \triangleleft \mathbf{n} \triangleleft \mathbf{b}\).
		\end{enumerate}
		Then \(\mathbf{n}' \defeq \mathbf{a} + \mathbf{b} - \mathbf{n}\) satisfies \(\mathbf{a} \triangleleft \mathbf{n}' \triangleleft \mathbf{b}\), and (the generalization of) Lemma \ref{Lemma.Boxes-of-sums-of-certain-vertices} gives that \(\mathbf{n}' \in \mathcal{A}(d,k)\) is as
		wanted.
		\item Suppose that \(\tau\) contains no gaps, that is
		\begin{enumerate}[label=(\arabic*)] \stepcounter{enumii} %
			\item \(\tau = \{ \mathbf{m}^{(j)} \mid 0 \leq j \leq r-2 \} \) with
		\end{enumerate}	
		\begin{equation} \label{Eq.Sequence-of-ms}
			\mathbf{m} = \mathbf{m}^{(0)} \triangleleft \mathbf{m}^{(1)} \triangleleft \dots \triangleleft \mathbf{m}^{(r-2)}
		\end{equation}
		and either
		\begin{equation} \tag{\arabic{enumii}a}
			\mathbf{n} = \mathbf{m}
		\end{equation}
		or
		\begin{equation} \tag{\arabic{enumii}b}
			\mathbf{n} = \mathbf{m}^{(r-2)}.
		\end{equation}
		(The cases \(\mathbf{n} = \mathbf{m}^{(j)}\) with \(1 \leq j \leq r-3\) are covered by (1).) Note that the \(\tau\) of this form are those labelled case (b) in Theorem \ref{Theorem.Simplicial-complexes-strongly-equidimensional}. On these, the value of \(v_{k}^{(d)}\) is constant, say 
		\(v \defeq v_{k}^{(d)}(\mathbf{m}^{(j)})\).
		
		Suppose we are in \fbox{Case (2a)}. Consider a simplex \(\tau'\) of \(\mathcal{W}(d,k)\) contained in \([\mathbf{m}^{(1)}, \mathbf{m}^{(1)} + \mathbf{y}]\), encompassing \(\sigma = \{ \mathbf{m}^{(1)}, \dots, \mathbf{m}^{(r-2)}\}\), and 
		maximal with these properties. Lemma \ref{Lemma.On-vertices-and-simplices} (Case (a)) shows that for \(\mathbf{n}' \defeq \max \tau'\) either \(\mathbf{m}^{(r-2)} \triangleright \mathbf{n}' \triangleright \mathbf{m}^{(1)} + \mathbf{y}\) or
		\(\mathbf{n}' = \mathbf{m}^{(1)} + \mathbf{y}\) holds. In either case, \(\tau' = \sigma \cup \{\mathbf{n}'\} = \{ \mathbf{m}^{(1)}, \dots, \mathbf{m}^{(r-2)}\} \cup \{\mathbf{n}'\}\) is as wanted.
		
		Now assume \fbox{Case (2b)}. If \(v \geq d\), then \(\mathbf{n}' \defeq \mathbf{m}^{(r-3)} - \mathbf{y} \in \mathcal{A}(d,k)\) (see Corollary \ref{Corollary.Membership-of-AIZ}(ii)), hence 
		\(\tau' = \{ \mathbf{n}' \} \cup \sigma = \{\mathbf{n}' \} \cup \{\mathbf{m}^{(0)}, \dots, \mathbf{m}^{(r-3)}\}\) is as desired. Therefore we can assume that \(v < d\). Let \(\tau'\) be a simplex of \(\mathcal{A}(d,k)\)
		contained in \([\mathbf{m}^{(r-3)} - \mathbf{y}, \mathbf{m}^{(r-3)}]\), encompassing \(\sigma\), and maximal with these properties.
		
		Lemma \ref{Lemma.On-vertices-and-simplices} (Case (b)) shows that \(\tau'\) is strictly larger than \(\sigma\); hence \(\tau' = \{ \mathbf{n}' \} \cup \sigma \) with some \(\mathbf{n}'\) (that must satisfy \(\mathbf{n}' = \mathbf{m}^{(r-3)} - \mathbf{y}\) or
		\(\mathbf{m}^{(r-3)} - \mathbf{y} \triangleleft \mathbf{n}' \triangleleft \mathbf{m}^{(0)} \), but these properties are not relevant).
		\item Suppose that
		\begin{enumerate}[label=(\arabic*)] \stepcounter{enumii} \stepcounter{enumii} %
			\item \(\tau\) contains a gap, that is, cannot be written in the form \eqref{Eq.Sequence-of-ms}.
		\end{enumerate}
		There exists a vertex \(\mathbf{h}\) of \(\mathcal{W}\) such that \(\varphi \defeq \tau \cup \{\mathbf{h}\}\) is a maximal simplex of \(\mathcal{W}\). It may be written \(\varphi = \{ \mathbf{m}^{(j)} \mid 0 \leq j < r\} \), where
		\begin{equation} \label{Eq.Sequence-of-ms-ends-at-m+y}
			\mathbf{m} = \mathbf{m}^{(0)} \triangleleft \mathbf{m}^{(1)} \triangleleft \dots \triangleleft \mathbf{m}^{(r-1)} = \mathbf{m} + \mathbf{y}.
		\end{equation}
		The cases \(\mathbf{h}= \mathbf{m}^{(j)}\) with \(j = 0\) or \(r-1\) are covered by (2); so we may assume \(\mathbf{h} = \mathbf{m}^{(j)}\) with \(1 \leq j \leq r-2\). We distinguish the subcases
		\begin{equation} \tag{\arabic{enumii}a}
			\mathbf{m} \leq \mathbf{n} < \mathbf{h}
		\end{equation}
		and
		\begin{equation} \tag{\arabic{enumii}b}
			\mathbf{h} < \mathbf{n} \leq \mathbf{m}^{(r-1)}
		\end{equation}
		with sub-subcases 
		\begin{align}
				\mathbf{m}	&< \mathbf{n} \triangleleft \mathbf{h} \tag{\arabic{enumii}a1}  \label{Case3a1} \\
				\mathbf{m}	&< \mathbf{n} \triangleleft \mathbf{b} < \mathbf{h} 	&& (\text{some } \mathbf{b} \in \sigma) \tag{\arabic{enumii}a2} \label{Case3a2}\\
				\mathbf{m} 	&= \mathbf{n} \triangleleft \mathbf{h} \tag{\arabic{enumii}a3} \label{Case3a3}\\
				\mathbf{m}	&= \mathbf{n} \triangleleft \mathbf{b} < \mathbf{h} \tag{\arabic{enumii}a4} \label{Case3a4}
		\intertext{and}
				\mathbf{h}	&\triangleleft \mathbf{n} < \mathbf{m}^{(r-1)} \tag{\arabic{enumii}b1} \label{Case3b1} \\
				\mathbf{h}	&\triangleleft \mathbf{n} = \mathbf{m}^{(r-1)} \tag{\arabic{enumii}b2} \label{Case3b2} \\
				\mathbf{h}	&< \mathbf{a} \triangleleft \mathbf{n} < \mathbf{m}^{(r-1)}	&&(\text{some } \mathbf{a} \in \sigma) \tag{\arabic{enumii}b3} \label{Case3b3} \\
				\mathbf{h}	&< \mathbf{a} \triangleleft \mathbf{n} = \mathbf{m}^{(r-1)}. \tag{\arabic{enumii}b4} \label{Case3b4}
		\end{align}
		We will see in a moment that all these cases are covered by cases treated earlier by our lemmas.		
		\item Replacing \(\mathbf{m}\) in \eqref{Case3a2} by the lower neighbor \(\mathbf{a}\) of \(\mathbf{n}\) in \eqref{Eq.Sequence-of-ms-ends-at-m+y} if necessary, we see that \eqref{Case3a2} is in fact covered by (1), and the 
		same holds for \eqref{Case3b3} (where \(\mathbf{m}^{(r-1)}\) is replaced by the upper neighbor \(\mathbf{b}\) of \(\mathbf{n}\)).
		\item Sub-subcases \eqref{Case3a1} and \eqref{Case3b1} are such that there exist \(\mathbf{a}\) and \(\mathbf{b} \in \sigma\) such that \(\mathbf{a} \triangleleft \mathbf{n} \triangleleft \mathbf{h} \triangleleft \mathbf{b}\)
		or \(\mathbf{a} \triangleleft \mathbf{h} \triangleleft \mathbf{n} \triangleleft \mathbf{b}\), respectively, and are covered by Lemma \ref{Lemma.Vertex-sandwich}.
		\item As to \eqref{Case3a3}, \(\sigma = \{ \mathbf{m}^{(2)}, \dots, \mathbf{m}^{(r-1)} \}\), and the same argument as in (2a), using Lemma \ref{Lemma.On-vertices-and-simplices} Case (a), shows that either 
		\(\mathbf{n}' \defeq \mathbf{m}^{(2)} + \mathbf{y} \in \mathcal{W}(d,k)\), or there exists \(\mathbf{n}' \in \mathcal{W}(d,k)\) with \(\mathbf{m}^{(r-1)} \triangleleft \mathbf{n}' \triangleleft \mathbf{m}^{(2)} + \mathbf{y}\). 
		For \eqref{Case3a4}, \(\sigma = \{ \mathbf{m}^{(1)}, \dots \hat{\mathbf{m}}^{(j)}, \dots, \mathbf{m}^{(r-1)} \}\) with \(\mathbf{m}^{(j)} = \mathbf{h}\) omitted, and Lemma \ref{Lemma.On-vertices-and-simplices} Case (a) in conjunction with Lemma \ref{Lemma.Membership-of-sums-of-vertices} yields 
		that \(\mathbf{n}' \defeq \mathbf{m}^{(1)} + \mathbf{y} \in \mathcal{W}(d,k)\).
		\item In sub-subcases \eqref{Case3b2} and \eqref{Case3b4}, \(\sigma = \{\mathbf{m}^{(0)}, \dots, \mathbf{m}^{(r-3)}\}\) or \(\{ \mathbf{m}^{(0)}, \dots, \hat{\mathbf{m}}^{(j)}, \dots, \mathbf{m}^{(r-2)}\}\), respectively, where as before
		\(\mathbf{h} = \mathbf{m}^{(j)}\) is omitted. Let \(v \defeq v_{k}^{(d)}(\mathbf{m}^{(r-3)})\) in the former and \(v = v_{k}^{(d)}(\mathbf{m}^{(r-2)})\) is the latter case. If \(v \geq d\) then \(\mathbf{n}' \defeq \mathbf{n}^{(r-3)} - \mathbf{y}\)
		(resp. \(\mathbf{n}' \defeq \mathbf{n}^{(r-2)} - \mathbf{y}\)) belongs to \(\mathcal{A}(d,k)\) by Corollary \ref{Corollary.Membership-of-AIZ}(ii). So suppose \(v < d\), in which case we may apply Lemma \ref{Lemma.On-vertices-and-simplices} Case (b) to deduce that
		\begin{equation}
			\text{either } \mathbf{n}' \defeq \mathbf{m}^{(r-3)} - \mathbf{y} \in \mathcal{A}(d,k), \text{ or there exists } \mathbf{n}' \in \mathcal{A}(d,k) \text{ with } \mathbf{m}^{(r-3)} - \mathbf{y} \triangleleft \mathbf{n}' \triangleleft \mathbf{m}^{(0)}; \tag{\arabic{enumii}b2}
		\end{equation}
		\begin{equation}
			\mathbf{n}' \defeq \mathbf{m}^{(r-2)} - \mathbf{y} \in \mathcal{A}(d,k), \tag{\arabic{enumii}b4}
		\end{equation}
		respectively. (In \eqref{Case3b4}, the function \(v_{k}^{(d)}\) is not constant on \(\sigma\), which therefore has a \enquote{downward} extension \(\tau' = \{ \mathbf{n}' \} \cup \sigma\), and \(\mathbf{n}'\) must equal
		\(\mathbf{m}^{(r-2)} - \mathbf{y}\).)
		
		In each of the (sub-)cases, we have constructed some vertex \(\mathbf{n}' \in \mathcal{A}(d,k)\) with the wanted properties, and thus the theorem is shown.
	\end{enumerate}
\end{proof}

\section{The involution \( (\,.\,)\hat{} \)} \label{Section.Involution}

We study the involution on \(\mathcal{A}\) and \(\mathcal{BT}\) which corresponds to the non-trivial automorphism of the Dynkin diagram.

\subsection{} Let \(\mathbf{G}\) be the group scheme \(\PGL(r)\) with its subgroups \(\mathbf{B}\) and \(\mathbf{T}\), the standard Borel subgroup of upper triangular matrices and the standard torus of diagonal matrices. For
simplicity, we write matrices where classes of matrices modulo scalar matrices are meant.

For \(g \in \mathbf{G}(K_{\infty}) = \PGL(r, K_{\infty})\), we let \(g^{t}\) be its transpose, and
\begin{equation}
	(\,.\,)^{*} \colon G(K_{\infty}) \longrightarrow G(K_{\infty})
\end{equation}
is the automorphism \(g \mapsto g^{*} \defeq (g^{t})^{-1} = (g^{-1})^{t}\). It is well-known that \((\,.\,)^{*}\) is not inner. (Recall that we always assume \(r \geq 3\). For \(r=2\), \((\,.\,)^{*}\) is the inner automorphism induced by the element
\( (\begin{smallmatrix} 0 & 1 \\ -1 & 0 \end{smallmatrix})\).) We have \(\mathbf{T}^{*} = \mathbf{T}\) and \(\mathbf{B}^{*} = \mathbf{B}^{-}\), the group of lower triangular matrices. Let \(w\) be the well-defined maximal element 
of the Weyl group \(W = W(\mathbf{T},\Phi)\) with respect to the Borel subgroup \(\mathbf{B}\). Then
\begin{equation}
	w = \begin{pmatrix} 0 & \cdots & 0 & 1 \\ \vdots & 	& 1 & 0 \\ 0 & & & \vdots \\ 1 & 0 & \cdots & 0 \end{pmatrix} \text{ with zeroes off the anti-diagonal}, w^{2} = 1, w\mathbf{B}w^{-1} = \mathbf{B}^{-}.
\end{equation}
Further, the two involutions \( (\,.\,)^{w} \colon g \mapsto wgw^{-1}\) and \( (\,.\,)^{*} \colon g \mapsto g^{*}\) commute. Therefore
\begin{equation}
	(\,.\,)\hat{} \colon g \longmapsto g\hat{} \defeq wg^{*}w^{-1} = (wgw^{-1})^{*}
\end{equation}
is an involution of \(\mathbf{G}\), too. As the set of vertices \(\mathcal{BT}(\mathds{Z})\) of \(\mathcal{BT}\) is given by \(\mathbf{G}(K_{\infty})/\mathbf{G}(O_{\infty})\) and \(\mathbf{G}(O_{\infty})\hat{} = \mathbf{G}(O_{\infty})\),
we finally get a simplicial involution on \(\mathcal{BT}\), also termed \( (~)\hat{}\). It stabilizes the data \(\mathbf{T}\), \(\mathbf{B}\) and \(\mathcal{A}\), \(\mathcal{W}\), and its effect on \(\mathbf{T}\) is
\begin{equation}
	\begin{pmatrix} t_{1} & \cdots & 0 \\ \vdots & \ddots & \vdots \\ 0 & \cdots & t_{r} \end{pmatrix} \longmapsto \begin{pmatrix} t_{r}^{-1} & \cdots & 0 \\ \vdots & \ddots & \vdots \\ 0 & \cdots & t_{1}^{-1} \end{pmatrix}.
\end{equation}
Correspondingly, its effect on \(\mathcal{A}(\mathds{Z}) \overset{\cong}{\longrightarrow} \{ [L_{\mathbf{n}}] \mid \mathbf{n} \in \mathds{Z}^{r} \}\) (recall the notation from \eqref{Eq.Vertex-set-of-full-subcomplex} and \eqref{Eq.Equivalence-of-vertices}) is 
\[
	(n_{1}, \dots, n_{r}) \longmapsto ({-}n_{r}, \dots, {-}n_{1}),
\]
which in normalized coordinates is 
\begin{align} \label{Eq.Normalized-coordinates}
	\mathbf{n} = (n_{1}, \dots, n_{r-1}, 0) 	&\mapsto \text{class of } (n_{1} - n_{r}, n_{1} - n_{r-1}, \dots, n_{1} - n_{1}) \nonumber \\
																		&= (n_{1}, n_{1} - n_{r-1}, \dots, n_{1}-n_{2}, 0) \eqdef \mathbf{n}\hat{}.
\end{align}
One easily checks that for the simple roots \(\boldsymbol{\alpha}_{i}\) of \(\Phi\) the rule  \(\boldsymbol{\alpha}_{i}(\mathbf{n}\hat{}) = \boldsymbol{\alpha}_{r-i}(\mathbf{n})\) holds, or briefly
\begin{equation}
	\boldsymbol{\alpha}_{i}\hat{} = \boldsymbol{\alpha}_{r-i}.
\end{equation}
That is, \( (\,.\,)\hat{}\) corresponds to the inversion of the Dynkin diagram \tikz[baseline, scale=0.75]{\draw (0,0) -- (1,0); \draw (2,0) -- (3,0); \node (dots) at (1.5,0) {\(\cdots\)}; \draw[fill=black] (0,0) circle (2pt);  \draw[fill=black] (1,0) circle (2pt);  \draw[fill=black] (2,0) circle (2pt);  \draw[fill=black] (3,0) circle (2pt);} of type \(A_{r-1}\) of \(\Phi\).

Now we can state and prove the symmetry property of \(\mathcal{W}(d,k)\) with respect to \((\,.\,)\hat{}\).

\begin{Theorem} \label{Theorem.Involution-and-membership}
	Let \(d\) be a natural number and \(1 \leq k < rd\). For \(\mathbf{n} \in \mathcal{W}(\mathds{Z})\), the equivalence
	\begin{equation} \label{Eq.Mirrored-membership}
		\mathbf{n} \in \mathcal{W}(d,k) \Longleftrightarrow \mathbf{n}\hat{} \in \mathcal{W}(d,rd-k)
	\end{equation}
	holds. Therefore, \(\mathcal{W}(d,k)\hat{} = \mathcal{W}(d,rd-k)\) and, since \( (\,.\,)\hat{}\) is defined on \(\mathcal{BT}\), the analogous properties \(\mathcal{BT}(d,k)\hat{} = \mathcal{BT}(d,rd-k)\), 
	\(\mathcal{A}(d,k)\hat{} = \mathcal{A}(d,rd-k)\) hold.
\end{Theorem}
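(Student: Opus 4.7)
The plan is to exhibit an explicit order-reversing bijection between the $d$-boxes of $\mathbf{n}$ and those of $\mathbf{n}\hat{}$, from which the symmetry \eqref{Eq.Mirrored-membership} will follow immediately.

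Writing $\mathbf{m} \defeq \mathbf{n}\hat{}$, formula \eqref{Eq.Normalized-coordinates} gives $m_j = n_1 - n_{r+1-j}$ for $1 \leq j \leq r$ (with the convention $n_r = 0$, so $m_1 = n_1$ and $m_r = 0$). First I would check the routine compatibility that $\mathbf{m} \in \mathcal{W}(\mathds{Z})$, i.e.\ $m_1 \geq m_2 \geq \cdots \geq m_r = 0$, which is immediate from $n_i \geq n_{i+1}$. Next, I would introduce the map
\[
\phi \colon \{1,\dots,r\} \times \mathds{Z} \longrightarrow \{1,\dots,r\} \times \mathds{Z}, \qquad (i,v) \longmapsto (r+1-i,\ n_1 + d - 1 - v).
\]
A short calculation shows that $(i,v)$ is a $d$-box of $\mathbf{n}$ (equivalently $n_i \leq v < n_i + d$) if and only if $\phi(i,v)$ satisfies $m_{r+1-i} \leq n_1+d-1-v < m_{r+1-i} + d$, i.e.\ $\phi(i,v)$ is a $d$-box of $\mathbf{m}$. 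Since $\phi$ is clearly an involution on the ambient set, it restricts to a bijection between $\diag^{(d)}(\mathbf{n})$ and $\diag^{(d)}(\mathbf{m})$, and both sides of course have cardinality $rd$.

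The key point is that $\phi$ reverses the order \eqref{Eq.Order-of-boxes}: comparing $(i,v) \leq (i',v')$ with $\phi(i,v) \leq \phi(i',v')$, the condition $v < v'$ flips to $n_1+d-1-v > n_1+d-1-v'$, and the tie-breaking $i \geq i'$ flips to $r+1-i \leq r+1-i'$. Consequently, if we number the $d$-boxes of $\mathbf{n}$ and $\mathbf{m}$ in the usual increasing fashion, then
\[
\phi\bigl(B_k^{(d)}(\mathbf{n})\bigr) = B_{rd+1-k}^{(d)}(\mathbf{m}) \qquad (1 \leq k \leq rd).
\]

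Finally, $\phi$ preserves the relation ``same column'' of a $d$-diagram: the second coordinate of $\phi(i,v)$ depends only on $v$, so two $d$-boxes of $\mathbf{n}$ share a column exactly when their $\phi$-images share a column in $\diag^{(d)}(\mathbf{m})$. Applying this to $k$ and $k+1$, the defining condition $v_k^{(d)}(\mathbf{n}) = v_{k+1}^{(d)}(\mathbf{n})$ for $\mathbf{n} \in \mathcal{W}(d,k)$ becomes $v_{rd-k}^{(d)}(\mathbf{m}) = v_{rd+1-k}^{(d)}(\mathbf{m})$, i.e.\ $\mathbf{m} \in \mathcal{W}(d,rd-k)$, proving \eqref{Eq.Mirrored-membership}. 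The statements $\mathcal{W}(d,k)\hat{} = \mathcal{W}(d,rd-k)$, and analogously for $\mathcal{A}(d,k)$ and $\mathcal{BT}(d,k)$, then follow by applying the involution of $\mathcal{BT}$ together with its equivariance. The main obstacle is purely notational---keeping the indexing of rows and columns correctly reversed---and there is no substantial conceptual difficulty once the involution $\phi$ is introduced.
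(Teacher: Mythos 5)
Your proposal is correct and follows essentially the same route as the paper: the map $\phi$ you define is exactly the paper's involution $\delta\colon (i,v)\mapsto (r+1-i,\,n_1+d-1-v)$, and the argument via order reversal sending $B_k^{(d)}(\mathbf{n})$ to $B_{rd+1-k}^{(d)}(\mathbf{n}\hat{}\,)$ is the one given there. (The paper merely adds a separate, trivial treatment of $d=1$ via the wall description $\mathcal{W}(1,k)=\mathcal{W}_{r-k}$, which your argument covers anyway.)
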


\begin{proof}
	\begin{enumerate}[label=(\roman*), wide]
		\item If \(d=1\), then \(\mathcal{W}(1,k) = \mathcal{W}_{r-k}\), the \((r-k)\)-th wall of \(\mathcal{W}\), characterized by \(\mathbf{n} \in \mathcal{W}_{i} \Leftrightarrow n_{i} = n_{i+1}\). Then \eqref{Eq.Mirrored-membership} is immediate from \eqref{Eq.Normalized-coordinates}. Hence 
		we may assume that \fbox{\(d \geq 2\)} and work in our usual framework using \(d\)-diagrams.
		\item We will show more precisely: The map
		\begin{align} \label{Eq.Map-delta}
			\delta \colon \diag^{(d)}(\mathbf{n})	&\longrightarrow \diag^{(d)}(\mathbf{n}\hat{}) \\
																(i,v)			&\longmapsto (r+1-i, n_{1} + d-1-v) \nonumber
		\end{align}
		is well-defined and bijective, and sends
		\begin{equation} \label{Eq.Boxes-under-involution}
			B_{k}^{(d)}(\mathbf{n}) \text{ to } B_{rd+1-k}^{(d)}(\mathbf{n}\hat{}).
		\end{equation}
		\item As both sets \(\diag^{(d)}(\mathbf{n})\) and \(\diag^{(d)}(\mathbf{n}\hat{})\) have the same cardinality, it suffices for \eqref{Eq.Map-delta} to show that \(\delta\) is well-defined and satisfies \(\delta^{2} = \id\). Now
		\begin{align*}
			(i,v) \in \diag^{(d)}(\mathbf{n}) \Longleftrightarrow n_{i} \leq v < n_{i} + d	&&(1 \leq i < r)
		\end{align*}
		and, after a little calculation, this turns out to be equivalent with
		\[
			\mathbf{n}\hat{}_{r+1-i} \leq n_{1} + d-1 - v < \mathbf{n}\hat{}_{r+1-i} + d
		\]
		for all \(i\), that is, \(\delta\) maps in fact to \(\diag^{(d)}(\mathbf{n}\hat{})\). The equation \(\delta^{2} = 1\) is trivial.
		\item Recall the order relation \enquote{\(\leq\)} on \(\diag^{(d)}(\mathbf{n})\) given by \eqref{Eq.Order-of-boxes}. It is obvious from the definition of \(\delta\) that it reverses this order, which gives \eqref{Eq.Boxes-under-involution}.
	\end{enumerate}
\end{proof}

\subsection{} Coming back to the arithmetic significance of \(\mathcal{BT}(d,k) = \mathcal{BT}({}_{a}\ell_{k}) = \lambda(\Omega({}_{a}\ell_{k}))\), where \(a \in A = \mathds{F}[T]\) has degree \(d\), we find the following relationship. Let
\(a \in A\) of degree \(d\) be given, and consider for \(1 \leq k <rd\) the modular forms \({}_{a}\ell_{k}\). Then the zero sets \(\Omega({}_{a}\ell_{k})\) of \({}_{a}\ell_{k}\) and \(\Omega({}_{a}\ell_{rd-k})\) of \({}_{a}\ell_{rd-k}\) are related by
\begin{equation} \label{Eq.Involution-on-Omegas}
	\lambda(\Omega({}_{a}\ell_{k}))\hat{} = \lambda(\Omega({}_{a}\ell_{rd-k})).
\end{equation}

\subsection{} Note that the considerations of this section also apply for \(r=2\). Here \(\mathcal{BT}(d,k) = \mathcal{BT}^{2}(d,k)\) is a finite set of vertices of the Bruhat-Tits tree \(\mathcal{BT} = \mathcal{BT}^{2}\). The involution
\((\,.\,)\hat{}\) is trivial, and \eqref{Eq.Mirrored-membership} resp. \eqref{Eq.Involution-on-Omegas} degenerate to \(\mathcal{W}(d,k) = \mathcal{W}(d,k)\hat{}\) resp. \(\lambda(\Omega({}_{a}\ell_{k})) = \lambda(\Omega({}_{a}\ell_{2d-k}))\). This is in accordance with the symmetry property
in Theorem 5.1 of \cite{Gekeler2011}.

\section{Examples and concluding remarks} \label{Section.Examples}

Note that our Theorems \ref{Theorem.Simplicial-complexes-connected}, \ref{Theorem.Simplicial-complexes-strongly-equidimensional}, \ref{Theorem.Simplicial-complexes-boundaryless} and \ref{Theorem.Involution-and-membership} along with Proposition \ref{Proposition.Characterization-of-Wdk-as-set-of-real-points} assert all the ingredients of the Main Theorem 1.8, which thereby is proved. Here we present examples for the simplicial complexes \(\mathcal{W}(d,k)\).
For reasons of presentation, we restrict to the case \(r=3\), where \(\mathcal{W}(d,k)\) has dimension 1, i.e., is a graph embedded into \(\mathcal{W} = \mathcal{W}^{3}\) (see \ref{Picture.Weyl-Chamber} and the legend given there).

\begin{Example}[\(r=3\), the graphs \(\mathcal{W}(2,k)\), \(1 \leq k \leq 5\)] ~ \label{Example.W2k}
	\begin{center}
		\begin{tikzpicture}
			  \foreach \row in {0, 1, ...,\rows} {
       			 \draw ($\row*(0.5, {0.5*sqrt(3)})$) -- ($(\rows,0)+\row*(-0.5, {0.5*sqrt(3)})$);
       		 	\draw ($\row*(1, 0)$) -- ($(\rows/2,{\rows/2*sqrt(3)})+\row*(0.5,{-0.5*sqrt(3)})$);
       			 \draw ($\row*(1, 0)$) -- ($(0,0)+\row*(0.5,{0.5*sqrt(3)})$);
   			 }
   			 \draw[ultra thick] (5.5,0) -- (1,0) -- (0.5,{0.5*sqrt(3}) -- ($1.1*(2.5,{2.5*sqrt(3)})$);
   			 \draw[ultra thick, dashed] (0,0) -- (0.5,{0.5*sqrt(3)}) -- (4.5,{0.5*sqrt(3)});
   			 \draw[ultra thick, dotted] (0,0) -- (1,0) -- (3,{2*sqrt(3)});
   			 \draw (0,0) -- ($3.5*(1.5,{0.5*sqrt(3)})$);
   			 % Nodes
   			 \node (W) at (0.5,2.5) {\(\mathcal{W}=\mathcal{W}^{3}\)};
   			 \node (W1) at (2,5) {\(\mathcal{W}_{1} = \mathcal{W}(2,5)\)};
   			 \node (W2) at (5,-0.5) {\(\mathcal{W}_{2} = \mathcal{W}(2,1)\)};
   			 \node (sigma) at (0.5,0.5) {\(\sigma\)};
   			 \node (xhat) at ($(4.5,{1.5*sqrt(3)})+0.5*(-0.5,{+0.5*sqrt(3)})$) {\(x\hat{}\)};
   			 \node (x) at ($(4.5,{1.5*sqrt(3)})+0.5*(0.5,{-0.5*sqrt(3)})$) {\(x\)};
   			 \draw (x) to[bend right=30] (xhat);
   			 % Legend
   			 \draw[ultra thick, dashed] (6.9,4) -- (7.4,4);
   			 \draw[ultra thick] (6.9,3.5) -- (7.4,3.5);
   			 \draw[ultra thick, dotted] (6.9,3) -- (7.4,3);
   			 \node[right] (W22) at (7.5,4) {\(= \mathcal{W}(2,2)\)};
   			 \node[right] (W23) at (7.5,3.5) {\(=\mathcal{W}(2,3)\)};
   			 \node[right] (W24) at (7.5,3) {\(=\mathcal{W}(2,4)\)}; 
		\end{tikzpicture}
	\end{center} 
	Note that \(\mathcal{W}(2,1) = \mathcal{W}(1) = \) the wall \(\mathcal{W}_{2}\) and \(\mathcal{W}(2,2)\) have been drawn in [V] Figure 1. By symmetry, this gives \(\mathcal{W}(2,5) = \mathcal{W}(2,1)\hat{} = \mathcal{W}_{1}\) and 
	\(\mathcal{W}(2,4) = \mathcal{W}(2,2)\hat{}\). Let \(a \in A\) have degree 2, e.g., \(a = T^{2}\).
	Then \(f \defeq {}_{a} \ell_{1} {}_{a}\ell_{2} {}_{a}\ell_{3}\) has the property: The edges of the leftmost triangle \(\sigma\) of \(\mathcal{W}\) belong to \(\mathcal{W}(f)\), while
	 \(\overset{\circ}{\sigma}(\mathds{Q}) \cap \mathcal{W}(f) = \varnothing\). Hence the zero set \(\mathcal{W}(f)\) is not a full subcomplex of \(\mathcal{W}\) and \(f\) is not simplicial. Examples of modular forms of this shape that 
	 are not simplicial are abundant.
\end{Example}

\begin{Example}[\(r=3\), the graphs \(\mathcal{W}(3,k)\), \(k=3\) or \(4\)] ~ \label{Example.W3k}
	\begin{center}
		\begin{tikzpicture}
			  \foreach \row in {0, 1, ...,\rows} {
       			 \draw ($\row*(0.5, {0.5*sqrt(3)})$) -- ($(\rows,0)+\row*(-0.5, {0.5*sqrt(3)})$);
       		 	\draw ($\row*(1, 0)$) -- ($(\rows/2,{\rows/2*sqrt(3)})+\row*(0.5,{-0.5*sqrt(3)})$);
       			 \draw ($\row*(1, 0)$) -- ($(0,0)+\row*(0.5,{0.5*sqrt(3)})$);
   			 }
   			 \draw[ultra thick] ($1.1*(5,0)$) -- (1,0) -- (0.5,{0.5*sqrt(3)}) -- (1,{sqrt(3)}) -- (4.4,{sqrt(3)});
   			 \draw[ultra thick, dotted] (0,0) -- (1,0) -- (1.5,{0.5*sqrt(3)}) -- (4.7,{0.5*sqrt(3)});
   			 \draw[ultra thick, dotted] (1.5,{0.5*sqrt(3)}) -- (1,{sqrt(3)}) -- ($1.1*(2.5,{2.5*sqrt(3)})$);
   			 % Nodes
   			 \node (W) at (0.5,2.5) {\(\mathcal{W}=\mathcal{W}^{3}\)};
   			 % Legend
   			  \draw[ultra thick] (5.4,4) -- (5.9,4);
   			 \draw[ultra thick, dashed] (5.4,3.5) -- (5.9,3.5);
   			 \node[right] (W22) at (6,4) {\(= \mathcal{W}(3,3) = \mathcal{W}(3)\)};
   			 \node[right] (W23) at (6,3.5) {\(=\mathcal{W}(3,4)\)};
		\end{tikzpicture}
	\end{center} 
	As \(\mathcal{W}(3,k) = \mathcal{W}(k)\) for \(k=1,2\) has been given in \ref{Example.W2k} and \(\mathcal{W}(3,k)\hat{} = \mathcal{W}(3,9-k)\), we may restrict to \(k=3\) or \(4\).
\end{Example}

\begin{Example}[\(r = 3\), the graphs \(\mathcal{W}(4,k)\), \(k = 4,5,6\)] ~ \label{Example.W4k}
	\begin{center}
		\begin{tikzpicture}
			  \foreach \row in {0, 1, ...,\rows} {
       			 \draw ($\row*(0.5, {0.5*sqrt(3)})$) -- ($(\rows,0)+\row*(-0.5, {0.5*sqrt(3)})$);
       		 	\draw ($\row*(1, 0)$) -- ($(\rows/2,{\rows/2*sqrt(3)})+\row*(0.5,{-0.5*sqrt(3)})$);
       			 \draw ($\row*(1, 0)$) -- ($(0,0)+\row*(0.5,{0.5*sqrt(3)})$);
   			 }
   			 \draw[ultra thick] (0,0) -- (0.5,{0.5*sqrt(3)}) -- (1.5,{0.5*sqrt(3)}) -- (2,0) -- (5.5,0);
   			 \draw[ultra thick] (1.5,{0.5*sqrt(3)}) -- (2,{sqrt(3)}) -- (4.4,{sqrt(3)});
   			 \draw[ultra thick] (2,{sqrt(3)}) -- (1.5,{1.5*sqrt(3)}) -- (2.75,{2.75*sqrt(3)});
   			 
   			 \draw[ultra thick, dashed] (0,0) -- (1,0) -- (1.5,{0.5*sqrt(3)}) -- (2.5,{0.5*sqrt(3)}); 
   			 \draw[ultra thick, dashed](2.5,{0.55*sqrt(3)}) -- (4.7,{0.55*sqrt(3)});
   			 \draw[ultra thick, dashed] (1.5,{0.5*sqrt(3)}) -- (1,{sqrt(3)}) -- (1.5,{1.5*sqrt(3)}) -- (3.3,{1.5*sqrt(3)});
   			 
   			 \draw[ultra thick, dotted] (2, {sqrt(3)}) -- (2.5, {0.5*sqrt(3)}) -- (2,0) -- (1,0) -- (0.5,{0.5*sqrt(3)}) -- (1,{sqrt(3)}) -- (2,{sqrt(3)}) -- ($(1,0) + 1.1*(2,{2*sqrt(3)})$);
   			 \draw[ultra thick, dotted] (2.5,{0.45*sqrt(3)}) -- (4.8,{0.45*sqrt(3)});
   			  % Nodes
   			 \node (W) at (0.5,2.5) {\(\mathcal{W}=\mathcal{W}^{3}\)};
   			 % Legend
   			  \draw[ultra thick, dashed] (5.4,4) -- (5.9,4);
   			 \draw[ultra thick] (5.4,3.5) -- (5.9,3.5);
   			 \draw[ultra thick, dotted] (5.4,3) -- (5.9,3);
   			 \node[right] (W44) at (6,4) {\(= \mathcal{W}(4,4) = \mathcal{W}(4)\)};
   			 \node[right] (W45) at (6,3.5) {\(=\mathcal{W}(4,5)\)};
   			 \node[right] (W46) at (6,3) {\(=\mathcal{W}(4,6)\)};
		\end{tikzpicture}
	\end{center} 
	As \(\mathcal{W}(4,k) = \mathcal{W}(k)\) for \(k=1,2,3\) is in \ref{Example.W2k}  and \ref{Example.W3k} and \(\mathcal{W}(4,k)\hat{} = \mathcal{W}(4,12-k)\), we restrict to \(k=4,5,6\). Note that the graph \(\mathcal{W}(4,6)\) has a non-trivial cycle.
\end{Example}

\begin{Remark}[about the involution \( (\,.\,)\hat{}\)]
	A very satisfactory explanation/interpretation of Theorem \ref{Theorem.Involution-and-membership} would be given by the existence of a lift of \( (\,.\,)\hat{}\) to \(\Omega\), i.e., an involution \( \boldsymbol{\omega} \mapsto \boldsymbol{\omega}\boldsymbol{\hat{}}\)
	on \(\Omega\) such that 
	\begin{equation} \label{Eq.Diagram-involution-lift}
		\begin{tikzcd}
			\Omega \ar[d, "\lambda"'] \ar[r, "(\,.\,)\boldsymbol{\hat{}}"]	&\Omega \ar[d, "\lambda"]\\
			\mathcal{BT} \ar[r, "(\,.\,)\hat{}"']													&\mathcal{BT}
		\end{tikzcd}
	\end{equation}
	commutes. In view of \eqref{Eq.Normalized-coordinates}, a natural candidate for \( (\,.\,)\boldsymbol{\hat{}}\) would be the map
	\begin{equation} \label{Eq.Candidate-for-involution-lift}
		\boldsymbol{\omega} = (\omega_{1}, \dots, \omega_{r-1}, 1) \longmapsto \left(\omega_{1}, \frac{\omega_{1}}{\omega_{r-1}}, \dots, \frac{\omega_{1}}{\omega_{2}}, 1\right).
	\end{equation}
	This however fails for a number of reasons. First, and most important, \(\Omega\) is not stable under the map described in \eqref{Eq.Candidate-for-involution-lift} (I owe this hint to Andreas Schweizer). Second, a reasonable
	lift \( (\,.\,)\boldsymbol{\hat{}}\) as in \eqref{Eq.Diagram-involution-lift} had to interchange the zeroes of \( {}_{a}\ell_{k}\) with those of \( {}_{a}\ell_{rd-k}\), which however is excluded for \enquote{weight reasons}.
	
	Let \({}_{d}L_{k}\) be the \(\mathds{Q}\)-valued function on \(\mathcal{BT}(\mathds{Q})\) given by 
	\begin{equation}
		 {}_{d}L_{k}(\boldsymbol{x}) \defeq \log_{q} \lVert {}_{a}\ell_{k} \rVert_{\mathbf{x}},
	\end{equation}	
	where \(\lVert f \rVert_{\boldsymbol{x}}\) is the spectral norm \(\sup_{\boldsymbol{\omega} \in \lambda^{-1}(\boldsymbol{x})} \lvert f(\boldsymbol{x}) \rvert\) on the affinoid \(\lambda^{-1}(\boldsymbol{x})\) (see [I], [II], [IV]).
	(Actually, \(\lVert {}_{a}\ell_{k} \rVert_{\boldsymbol{x}}\) depends only on \(d = \deg a\), see [V] Proposition 3.8.) Now, as \( {}_{a}\ell_{k}\) has turned out to be simplicial, \( {}_{d}L_{k}\) encodes the zero set 
	\(\mathcal{BT}({}_{a}\ell_{k}) = \mathcal{BT}(d,k)\), see Proposition 1.8 in [V]. Therefore, as a substitute for some diagram \eqref{Eq.Diagram-involution-lift}, we expect(?) a simple relation between \( {}_{d}L_{k}\hat{}\) and
	\({}_{d}L_{rd-k}\) or perhaps between the van der Put transforms \( P({}_{a}\ell_{k})\hat{}\) and \(P({}_{a}\ell_{rd-k})\) (see [V] Section 1) that implies Theorem \ref{Theorem.Involution-and-membership}.
\end{Remark}

\subsection{} Similar to \(\mathcal{BT}(d,k)\), which presents a coarse picture of the zero locus \(\Omega({}_{a}\ell_{k})\) of \({}_{a}\ell_{k}\) in \(\Omega\), the object \(\Gamma \backslash \mathcal{BT}(d,k)\) may be seen as a coarse
picture of the quotient
\[
	\begin{tikzcd}
		\Gamma \backslash \Omega({}_{a}\ell_{k}) \ar[d, "\cong"'] \ar[r, hook]	& \Gamma \backslash \Omega \ar[d, "\cong"] \\
		M^{r}({}_{a}\ell_{k}) \ar[r, hook]															& M^{r}.
	\end{tikzcd}
\]
Here \(M^{r}\) is (the set of \(C_{\infty}\)-points of) the moduli scheme of rank-\(r\) Drinfeld \(A\)-modules. Now, despite the fact that \(\Gamma\) acts simplicially on \(\mathcal{BT}\) and its subcomplex \(\mathcal{BT}(d,k)\),
the quotient modulo \(\Gamma\) doesn't inherit a structure as a simplicial complex. This comes from the (well-known but annoying) fact that, e.g. in the  case of dimension 1, endpoints of \(1\)-simplices \(\sigma, \tau\) are
possibly identified under \(\Gamma\), but \(\sigma\) and \(\tau\) are not. This leads to double edges 
\begin{center}
	\begin{tikzpicture}
		\draw(0,0) to[bend right=30] (1,0); 
		\draw (1,0) to[bend right=30] (0,0); 
		\draw[fill=black] (0,0) circle (2pt); 
		\draw[fill=black] (1,0) circle (2pt);
		\node (GammasigmaGamma) at (0.5,0.5) {\([\sigma]\)};
		\node (GammatauGamma) at (0.5,-0.5) {\([\tau]\)};
	\end{tikzpicture}
\end{center} 
and similar phenomena in \(\Gamma \backslash \mathcal{BT}\). Hence the topological
space \(\Gamma \backslash \mathcal{BT}(d,k)(\mathds{R})\) has only the weaker structure of cell complex. Nevertheless, the next topics to deal with should be:
\begin{itemize}
	\item Study the spaces \(\Gamma \backslash \mathcal{BT}(d,k)(\mathds{R})\), the natural map \( \mathcal{W}(d,k)(\mathds{R}) \to \Gamma \backslash \mathcal{BT}(d,k)(\mathds{R})\), their Betti numbers, etc.;
	\item Work out the relationship between, say, the \(\ell\)-adic cohomology of the moduli scheme \(M({}_{a}\ell_{k})\) and the (co-)homology of \(\Gamma \backslash \mathcal{BT}(d,k)(\mathds{R})\);
	\item Instead of the zero sets of one modular form \( {}_{a}\ell_{k}\) in \(\Omega\), \(\mathcal{BT}\), \(\Gamma \backslash \mathcal{BT}\), study the simultaneous zero sets of families of modular forms in \(\Omega\), \(\mathcal{BT}\)
	and their quotients modulo \(\Gamma\), or modulo congruence subgroups \(\Gamma'\) of \(\Gamma\). For the relatively simple case of the family \( \{g_{k} \mid 1 \leq k < r, k \neq j\}\) where \(j\) is fixed, see \cite{Gekeler2019}.
\end{itemize}

\subsection{} As the reader will have noticed, \(q = \#(\mathds{F})\) has completely vanished from the largest part of the paper. This is due to the fact that we mainly worked with \(\mathcal{A}\) and \(\mathcal{W}\) and the combinatorics
of its subcomplexes. These are independent of \(q\) and are entirely determined through the root system \(\Phi\) and its Weyl group \(W\). The quantity \(q\) occurs only in the way how the different apartments 
\(\mathcal{A}' \cong \mathcal{A}\) are glued together, i.e., how many neighbors \(v' \in \mathcal{BT}(\mathds{Z})\) there are for a given vertex \(v = \mathbf{n}\) in \(\mathcal{A}\). Therefore, the simplicial complexes
\(\mathcal{A}(k)\), \(\mathcal{A}(d,k)\), \(\mathcal{BT}(k)\), \(\mathcal{BT}(d,k)\) are fundamental structures for \(\Phi\), and should play a role beyond their describing the zero sets of the Drinfeld modular forms \({}_{a}\ell_{k}\).

\begin{bibdiv}
	\begin{biblist}
		\bib{BruhatTits1972}{article}{author={F. Bruhat and J. Tits}, title={Groupes réductifs sur un corps local}, journal={Publ. Math. Inst. Hautes Étud. Sci}, volume={41}, year={1972}, pages={5-251}}
		\bib{Cornelissen1995}{article}{author={Cornelissen, G.}, title={Sur les zéros des séries d'Eisenstein de poids \(q^{k}-1\) pour \( \GL_{2}(\mathds{F}_{q}[T])\)}, journal={C.R. Acad. Sci. Paris Sér 1 Math.}, volume={321}, year={1995}, pages={817-820}}
		\bib{Drinfeld1974}{article}{author={Drinfel'd, V.G.}, title={Elliptic modules (Russian)}, journal={Mat. Sb (N.S.)}, volume={136}, year={1974}, pages={594-627}}
		\bib{Gekeler1988}{article}{author={Gekeler, E.-U.}, title={On the coefficients of Drinfeld modular forms}, journal={Invent. Math.}, volume={93}, year={1988}, number={3}, pages={667-700}}
		\bib{Gekeler1999}{article}{author={Gekeler, E.-U.}, title={A survey on Drinfeld modular forms}, journal={Turkish J. Math}, volume={23}, year={1999}, number={4}, pages={485-518}}
		\bib{Gekeler2011}{article}{author={Gekeler, E.-U.}, title={Zero distribution and decay at infinity of Drinfeld modular coefficient forms}, journal={Int. J. Number Theory}, volume={7}, year={2011}, number={3}, pages={671-693}}
		\bib{Gekeler2019}{article}{author={Gekeler, E.-U.}, title={Towers of $\mathrm{GL}(r)$-type of modular curves}, journal={J. Reine Angew. Math.}, volume={754}, year={2019}, pages={87-141}}
		\bib{Gekeler2017}{article}{author={Gekeler, E.-U.}, title={On Drinfeld modular forms of higher rank}, journal={J. Théor. Nombres Bordeaux}, volume={29}, year={2017}, number={3}, pages={875-902}}	
		\bib{Gekeler-ta-1}{article}{author={Gekeler, E.-U.}, title={On Drinfeld modular forms of higher rank II}, journal={J. Number Theory}, volume={232}, year={2022}, pages={4-32}}
		\bib{Gekeler2018}{article}{author={Gekeler, E.-U.}, title={On Drinfeld modular forms of higher rank III: The analogue of the $k/12$-formula}, journal={J. Number Theory}, volume={192}, year={2018}, pages={293-306}}
		\bib{Gekeler-ta-2}{article}{author={Gekeler, E.-U.}, title={On Drinfeld modular forms of higher rank IV: Modular forms with level}, journal={J. Number Theory}, volume={232}, year={2022}, pages={33-74}}
		\bib{Gekeler-ta-3}{article}{author={Gekeler, E.-U.}, title={On Drinfeld modular forms of higher rank V: The behavior of distinguished forms on the fundamental domain}, journal={J. Number Theory}, volume={222}, year={2021}, pages={75-114}}
		\bib{Goss1980}{article}{author={Goss, D.}, title={$\pi$-adic Eisenstein series for function fields}, journal={Compositio Math. 41}, year={1980}, number={1}, pages={3-38}}
		\bib{Humphreys1972}{book}{author={Humphreys, J.E.}, title={Introduction to Lie algebras and representation theory}, publisher={Springer-Verlag}, year={1972}, series={Grad. Texts in Math. 9}}
	\end{biblist}
\end{bibdiv}

\end{document}